\theoremstyle{plain}
\newtheorem{theorem}{Theorem}[section]
\newtheorem{lemma}[theorem]{Lemma}
\newtheorem{proposition}[theorem]{Proposition}
\newtheorem{corollary}[theorem]{Corollary}
\newtheorem{definition}[theorem]{Definition}
\newtheorem{notation}[theorem]{Notation}
\newtheorem{defn}[theorem]{Definition}
\newtheorem{thm}[theorem]{Theorem}
\theoremstyle{remark}
\newtheorem{remark}[theorem]{Remark}
\newtheorem{rmk}[theorem]{Remark}
\newtheorem{example}[theorem]{Example}
\numberwithin{equation}{section}
\newcommand\Rc{\mathcal{R}}
\newcommand\Os{\mathscr{O}}
\newcommand\Hs{\mathscr{H}}
\renewcommand\le{\leqslant}
\renewcommand\ge{\geqslant}
\DeclareMathOperator*{\bigast}{\raisebox{-0.6ex}{\scalebox{2.5}{$\ast$}}}
\def\A{{\mathbb{A}}}
\def\R{{\mathbb{R}}}
\def\N{{\mathbb{N}}}
\def\Z{{\mathbb{Z}}}
\def\sH{{\mathscr{H}}}
\def\sC{{\mathscr{C}}}
\def\sE{{\mathscr{E}}}
\def\sF{{\mathscr{F}}}
\def\sO{{\mathscr{O}}}
\def\ks{\mathfrak{s}}
\def\ki{\mathfrak{i}}
\def\fb{{\mathbbm{f}}}
\def\cB{{\mathcal B}}\def\cD{{\mathcal D}}\def\cE{{\mathcal E}}\def\cM{{\mathcal M}}\def\cN{{\mathcal N}}\def\cO{{\mathcal O}}\def\cM{{\mathcal{M}}}\def\cR{{\mathcal R}}
\def\cf{\emph{cf.}\;}\def\ie{\emph{i.e.}\;}
\def\iso{\xrightarrow{\ \sim\ }}
\def\wt{\what{\otimes}}
\def\vphi{\varphi}
\def\vt{{\vec{t}}}
\def\vv{\vec{v}}
\def\ena{{(\sE,\nabla)}}
\def\fna{{(\sF,\nabla)}}
\def\Irr{{\rm Irr}}
\def\wtilde{\widetilde}
\def\what{\widehat}
\def\Spec{{\rm Spec\,}}
\def\ker{{\rm Ker\,}}
\def\sp{\mathrm{sp}}
\def\hyp{\mathrm{hyp}}
\def\Frac{\mathrm{Frac}}
\def\ho{\hat{\otimes}}
\begin{document}
\setlength{\baselineskip}{0.55cm}	

\title{Pushforwards of $p$-adic differential equations}

\author{Velibor Bojkovi\'c }
\email{bojkovic@math.unipd.it}
\address{Dipartimento di matematica Tullio Levi-Civita, Universit\`a di Padova, Via Trieste 63, 35121 Padova}

\author{J\'er\^ome Poineau}
\email{jerome.poineau@unicaen.fr}
\address{Laboratoire de math\'ematiques Nicolas Oresme, Universit\'e de Caen, BP 5186, F-14032 Caen Cedex}

\thanks{The second author was partially supported by the ANR project ``GLOBES'': ANR-12-JS01-0007-01 and ERC Starting Grant ``TOSSIBERG'': 637027.}
\date{\today}

\subjclass[2010]{12H25 (primary), 14G22, 11S15 (secondary)}
\keywords{$p$-adic differential equation, Berkovich space, radius of convergence, Newton polygon, ramification}

\begin{abstract}
Given a differential equation on a smooth $p$-adic analytic curve, one may construct a new one by pushing forward by an \'etale morphism. The main result of the paper provides an explicit formula that relates the radii of convergence of the solutions of the two differential equations using invariants coming from the topological behavior of the morphism. We recover as particular cases the known formulas for Frobenius morphisms and tame morphisms.

As an application, we show that the radii of convergence of the pushforward of the trivial differential equation at a point coincide with the upper ramification jumps of the extension of the residue field of the point given by the morphism. We also derive a general formula computing the Laplacian of the height of the Newton polygon of a $p$-adic differential equation.

\end{abstract}

\maketitle
\tableofcontents

\section*{Introduction}

 The theory of $p$-adic differential equations has seen quite a progress in the last years, not only in better understanding of the classical situation --- equations with coefficients analytic functions on the affine line --- but also in extending many classical results to a more general setting, where coefficients are now analytic functions on curves of higher genus (see \cite{Bal10,Pul,Poi-Pul2,Kedlayalocalglobal}). Berkovich theory of analytic spaces provided a fertile ground to do so, as, in this setting, the generic point, which is one of the most important notions in the classical theory,  becomes a true point of the underlying space.

 In order to extend classical results to $p$-adic differential equations on higher genus curves, one of the dominating ideas is to pushforward the situation into the affine line by a suitable map so that the change of invariants (such as the radii of convergence of solutions) can be tracked explicitly. To be more precise, let us fix an algebraically closed complete nonarchimedean nontrivially valued field $k$ of characteristic 0. If we are given a quasi-smooth Berkovich curve $Y$, a differential equation $\ena$ on $Y$ and a point $y\in Y$, in order to understand the behavior of $\ena$ around $y$, we look for a finite \'etale map $\vphi:Y\to X$, where now $X$ is a domain in $\A^1_k$ and study the equation $\vphi_*\ena$ around the point $x=\vphi(y)$. If we can find a map $\vphi$ such that we can precisely say how the invariants of $\ena$ change when passing to $\vphi_*\ena$, we may substitute our original problem with, presumably, an easier one.
 
 A celebrated example of such a map $\vphi$ is the Frobenius endomorphism of the affine line $\A^1_k$, when~$k$ has positive residue characteristic~$p$, that acts on the elements of $k$ by simply raising to the power $p$. In this case, one can describe precisely how the radii of convergence get modified (\cf \cite[Theorem 10.5.1]{pde}), and this has been vastly exploited in bridging two relatively well understood classes of $p$-adic differential equations: the one where the radius of convergence at a given point is equal to 1, and the one where all the local solutions have a small radius of convergence (smaller than $p^{1/(p-1)}$). Unfortunately, Frobenius pushforward is only applicable for differential equations on domains of $\A^1_k$ 
(although an extension to higher genus curves has been announced in \cite{Bal-Ked}).

Another example, quite orthogonal to the former, where we can explicitly say how the radii of convergence of solutions change when passing from~$y$ to $x=\varphi(y)$ is the one where the degree of the extension $[\sH(y):\sH(x)]$ is prime to the residue characteristic of~$k$. It has been extensively used in~\cite{Poi-Pul2} to establish continuity properties of radii of convergence of solutions of differential equations on curves of higher genus by pushing forward to the line, where such properties had previously been established in \cite{Pul}. A similar example has been used in~\cite{Poi-Pul3} to prove harmonicity properties of the radii of convergence, subject to the quite unnatural assumption that the residue characteristic is different from~2 (in order to be able to impose the condition on the degree of the extensions in all directions out of a given point).
 
 Of course, there is a separate intrinsic interest in describing the invariants of the pushforward of a differential equation by a finite \'etale map $\vphi$ as above (for $X$ not necessarily in $\A^1_k$) in terms of the original equation, especially in describing the change of irregularity. To the best of our knowledge, very little is known so far.
 
 \bigbreak
 
  Another aspect is the relation between $p$-adic differential equations and ramification of morphisms. It is long believed that there is a quite close relation between the pushforward of the \textit{constant} differential equation by a morphism~$\varphi$ (that is $\vphi_*(\sO_Y,d_Y)$) and ramification properties of~$\vphi$. Perhaps a first paper exploring this relation is \cite{BaldaMilano}, where Francesco Baldassarri studied the bound on the radius of convergence in terms of the \textit{different} of the morphism at a point, consequently deducing a variant of the $p$-adic Rolle theorem. The recent paper~\cite{TemkinHerbrand} by Michael Temkin introduced a refined invariant of a morphism at a non-rational point $y\in Y$, called a profile function (roughly speaking, it is a map of the real unit interval to itself that describes the morphism locally around $y$). Furthermore, he developed a ramification theory for Galois extensions of one-dimensional analytic fields, introducing Herbrand function for such extensions. One of the main results in \textit{ibid.} is that the Herbrand function of the extension $\sH(y)/\sH(\varphi(y))$ coincides with the profile function of $\vphi$ at $y$.
  
\bigbreak
 
 This is where the goal of the present article fits in: given a finite \'etale morphism \mbox{$\vphi:Y\to X$} of quasi-smooth $k$-analytic curves, points $y\in Y$ and $x=\vphi(y)\in X$ and a differential equation $\ena$ on $Y$, describe the radii of convergence of solutions of $\vphi_*\ena$ at $x$ in terms of the radii of convergence of solutions of $\ena$ at $y$ and the profile function of the morphism~$\vphi$ at~$y$. The goal is achieved in Corollary \ref{cor:radial over discs}, where $Y$ and $X$ are open unit discs and $y$ is taken to be a rational point, and in Theorem \ref{push general} for general $Y$, $X$ and $y$. Along the way, we provide simplified formulas for some special cases of $\ena$ such as the constant connection. All the known instances of our results, such as the ones described above, are summarized in Corollary \ref{cor:specialcases} and we hope to convince the reader that they can be easily recovered using our arguments.

 After seeing the first version of this article, Michael Temkin suggested that our pushforward formula should have a form in which the transitivity property with respect to composition of morphisms looks more visible, similarly to the case of a profile function of a morphism. This was the reason to introduce the profile of a differential equation (at a point) at the end of the third section. Then the desired transitivity property becomes transparent  in Theorem \ref{thm:profile version}. We would like to thank Michael for his remarks concerning this.
 
  We restrict ourselves to applications in two (apparently) different directions, namely ramification of one-dimensional analytic fields on the one hand and irregularities and Laplacians in the area of $p$-adic differential equations on the other hand. In the former case, we establish a close (expected) relation between the Herbrand function of a Galois extension of one-dimensional analytic fields $\sH(y)/\sH(x)$ (for non-rational points~$x$ and~$y$) and the radii of convergence at $x$ of local sections of the pushforward of the constant connnection $\vphi_*(\sO,d)$ (Corollary \ref{cor:Herbrand}). In the latter case, we provide a formula for the change of irregularity and Laplacians when passing from $\ena$ to $\vphi_*\ena$, and use it to prove harmonicity properties of convergence polygons on arbitrary curves (Corollary~\ref{cor:partialheight}).  
  
\bigbreak

The paper is structured as follows. In the first section we recall the properties of the profile of a morphism $\vphi$ at a (non-rational) point $y\in Y$. Along the way, we provide an alternative definition of the profile function based on the structure of the fibers of projection maps coming from extensions of scalars.

 The second section is reserved for recalling the main definitions and results concerning $p$-adic differential equations on Berkovich curves. 
 
The third and fourth sections are the core of the paper. In the third section, we provide the main results about the radii of pushforward of differential equations, while the fourth contains applications of our main formulas, as described above.

%
%
%

\section{Finite morphisms between quasi-smooth $k$-analytic curves}
Let $k$ be an algebraically closed field, complete with respect to a non-archimedean and non-trivial valuation and of characteristic $0$. The corresponding norm on the field is denoted by $|\cdot|$ and the residue field is denoted by $\tilde{k}$. 

All the curves throughout the paper will be $k$-analytic in the sense of Berkovich. In addition, we will assume that they are quasi-smooth (rig-smooth) and unless otherwise stated we will assume that they are connected too.

\subsection{Quasi-smooth Berkovich curves}

We will use for granted the construction of the $k$-analytic affine line $\A^1_k$ from \cite[Section 4.2.]{Ber90}.

For $a\in k$ and $r\in [0,\infty)$, we denote by $D(a,r)$ (resp. $D(a,r^-)$ if $r>0$) the Berkovich closed (resp. open) disc centered at the point $a$ and of radius $r$. Similarly, for $a\in k$ and $r_1\ge r_2\in (0,\infty)$ (resp. $r_1> r_2\in(0,\infty)$) we denote by $A[a;r_2,r_1]$ (resp. $A(a;r_2,r_1)$) the Berkovich closed (resp. open) annulus centered at the point $a$ and of inner radius $r_2$ and outer radius $r_1$. The Shilov point of the disc $D(a,r)$ will be denoted by $\eta_{a,r}$ or $\eta_r$ if $a=0$.

Note that the disc $D(a,r)$ is strictly $k$-affinoid (we will simply say strict) if and only if $r\in |k^*|$. By abuse of notation we will say that an open disc $D(a,r^-)$ is strict if $r\in |k^*|$. In this case it is isomorphic to the open unit disc $D(0,1^-)$. Similarly, we say that the open annulus $A(a;r_2,r_1)$ is strict if $r_1,r_2\in |k^*|$; in this case, it is isomorphic to the normalized annulus $A(0;r,1)$, where $r = r_{2}/r_{1}\in |k^*|$. 

In general, given a $k$-analytic curve $X$, an analytic domain of $X$ isomorphic to an open disc (resp. strict open disc, open annulus, strict open annulus) is called an open disc (resp. strict open disc, open annulus, strict open annulus). If $D$ (resp. $A$) is a strict open disc (resp. strict open annulus), then any isomorphism $T:D\iso D(0,1^-)$ (resp. $T:A\iso A(0;r,1)$) will be called a normalizing coordinate on $D$ (resp. a normalizing coordinate on $A$).


\medbreak

The skeleton $S(A)$ of an annulus $A$ is the complement of all the open discs in $A$. For example, if $A=A(a;r_2,r_1)$, then $S(A)=\{\eta_{a,r}\mid r\in (r_2,r_1)\}$ and it can be naturally identified with the interval $(r,1)$, where $r=r_{2}/r_{1}$. This interval only depends on~$A$ and not on the choices of~$r_{2}$ and~$r_{1}$. In particular, $S(A)$ is naturally endowed with a metric.

It follows from their explicit descriptions that discs and annuli are endowed with (real) tree structures and that each interval in them containing no type~1 point naturally carries a metric modeled on that on the skeleton of an annulus above.

More generally, it follows from the semistable reduction theorem that, up to removing a locally finite set, any quasi-smooth $k$-analytic curve is a disjoint union of discs and annuli. As a consequence, quasi-smooth $k$-analytic curves are endowed with (real) graph structures. In particular, it makes sense to speak of intervals in them and, as before, any interval containing no type~1 point naturally carries a metric.

\medbreak

Now, let $x$ be a point in~$X$. The set of germs of intervals in $X$ with an endpoint in $x$ is denoted by $T_xX$ and is called the set of tangent directions (or tangent space) at $x$. An element of $T_xX$ is also called a branch. 

\medbreak

Let $K/k$ be a complete valued extension of $k$ and let us put $X_K:=X\wt_k K$ (see \cite[2.1]{Ber90} for the construction of $X\wt_k K$). Then, we have a natural projection map $\pi_{K/k}:X_K\to X$ (sometimes denoted by $\pi_K$ if $k$ is understood from the context). If $x$ is a point in~$X$, then each $K$-rational point in the set $\pi_{K/k}^{-1}(x)$ will be called a $K$-rational point above $x$ and we will use the notation~$x_K$ for it. For more details one may refer to \cite[Section~2.2]{Poi-Pul2}.

The previous construction and notion are meaningful even if the base field $k$ is not algebraically closed but rather just a complete valued field. However, by \cite[Corollaire~3.14]{Angie}, when $k$ is algebraically closed, each point $x\in X$ is {\em peaked} (see~\cite[Section~5.2]{Ber90}) or {\em universal} (\cite[D\'efinition 3.2]{Angie}): for each complete valued extension~$K/\sH(x)$, where $\sH(x)$ is the complete residue field of $x$, the tensor norm on $\sH(x)\wt K$ is multiplicative. In particular, $\cM(\sH(x)\wt_k K)$ contains a distinguished point, denoted by $\sigma_{K/k}(x)$ (or simply $\sigma_{K}(x)$ if~$k$ is clear from the context), which corresponds to the (multiplicative) norm on $\sH(x)\wt_k K$. By \cite[Corollaire3.7]{Angie}, the map of topological spaces $\sigma_{K/k}:X\to X_K$ is a continuous section of~$\pi_{K/k}$.

\begin{defn}
 We call the point $\sigma_{K/k}(x)$ the {\em $K$-generic point above $x$ in $X_K$}. 
\end{defn}

\subsection{Radial morphisms and profile functions}\label{radial morphisms}
We recall the notions of radial morphism and profile function introduced in \cite{TemkinHerbrand} and to which we refer for more details.

\subsubsection{Radial morphisms of open discs}\label{radial discs}

First we set up some notation that will be used later on. 

Consider the Berkovich affine line~$\A^1_{k}$ and choose a coordinate~$T$ on it. Denote by~$r$ the radius function: for each $x\in \A^1_{k}$, we have
\[r(x) := \inf_{a\in k} |(T-a)(x)|.\]

Let~$D$ be a strict open disc and let $z\in D(k)$. Choose an isomorphism $T_z:D\iso D(0,1^-)$ between~$D$ and the open unit disc in~$\A^1$ that sends~$z$ to~$0$. 

For each $\rho \in (0,1]$, set $D(z,\rho^-) = T_{z}^{-1}(D(0,\rho^-))$. Similarly, for each $\rho \in (0,1)$, set $D(z,\rho) = T_{z}^{-1}(D(0,\rho))$ and denote by~$z_{\rho}$ the unique point of the Shilov boundary of~$D(z,\rho)$. We also set $D(z,0)= \{z\}$ and $z_{0}=z$. We denote by~$l_z$ the canonical path from $z$ to the ``boundary'' of~$D$, that is 
\[l_{z} := \{z_{\rho}, \rho\in [0,1)\}.\] 
All these definitions are independent of the choice of~$T_{z}$.

We define a radius function~$r$ on~$D$ by pulling back the restriction of the radius function to~$D(0,1^-)$. It is independent of the choice of~$z$ and~$T_{z}$. 

If $\vphi:D_2\to D_1$ is a finite morphism of strict open discs, then, for each point $z\in D_2(k)$, $\vphi$ induces a continuous increasing map $f_{\vphi,z}: [0,1)\to [0,1)$ given by $\rho\mapsto r(\vphi(z_\rho))$. We sometimes extend this map to a continuous map $[0,1] \to [0,1]$ by setting $f_{\vphi,z}(1)=1$. We denote it the same way.

A closely related function is given in the next definition.

\begin{definition}\label{def:multiplicity}
Let $\vphi : Y \to X$ be a finite morphism of $k$-analytic curves. Let $y\in Y$. We define the multiplicity of~$\varphi$ at~$y$ as
\begin{equation}
n_{y} = e \cdot [\Hs(y) \colon \Hs(\vphi(y))],
\end{equation}
where $e=1$ if~$y$ has type~2, 3 or~4 and~$e$ is the unique integer such that $m_{y} = m_{\varphi(y)}^e \Os_{y}$ if~$y$ has type~1. We define the multiplicity function of~$\varphi$ by
\[n_{\varphi} \colon y\in Y \mapsto n_{y} \in \N.\]
\end{definition}

Let~$I$ be an interval in~$\R_{+}$ whose interior~$\mathring I$ is not empty. A map $f \colon I \to \R_{+}$ is called $|k^*|$-monomial if it is of the form $t \mapsto \alpha t^n$ with $\alpha\in |k^*|$ and $n\in\Z$. The integer~$n$ is then called the degree (or logarithmic slope) of~$f$. 

A map $f \colon I \to \R_{+}$ is called piecewise $|k^*|$-monomial if~$I$ may be written as a finite union of intervals $I_{i}$ with non-empty interiors such that the restriction of~$f$ to each~$I_{i}$ is $|k^*|$-monomial. For each $x \in \mathring I$, we may then define the left and right degrees of~$f$ at~$x$. We denote them respectively by $\deg^-_{f}(x)$ and $\deg^+_{f}(x)$. We say that a point $x\in \mathring I$ is a break-point of~$f$ when $\deg^-_{f}(x) \ne \deg^+_{f}(x)$. It will also be convenient to declare the points at the boundary of~$I$ to be break-points. Note that, if~$f$ is continuous, then all break-points in~$\mathring I$ belong to~$|k^*|$. 

\begin{lemma}[\protect{\cite[Lemma 2.2.5]{TemkinHerbrand}}]\label{lem:behaviourfz}
Let $\vphi:D_2\to D_1$ be a finite morphism of strict open discs. Let $z\in D_{2}(k)$. The function~$f_{\varphi,z}$ is continuous and piecewise $|k^*|$-monomial and its right degree coincides with the restriction of~$n_{\varphi}$ to~$l_{z}$ (canonically identified to $[0,1)$). 
\qed
\end{lemma}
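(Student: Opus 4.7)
The plan is to reduce everything to a power series computation by using the given coordinates. Pick an isomorphism $T_{z} \colon D_{2} \iso D(0,1^-)$ sending $z$ to $0$ and an isomorphism $T_{w} \colon D_{1} \iso D(0,1^-)$ sending $w := \varphi(z)$ to $0$. In these coordinates the finite morphism $\varphi$ is given by a power series $\varphi^{*} T_{w} = \sum_{i \ge 1} a_{i} T_{z}^{i}$ converging on the open unit disc, so $|a_{i}|\rho^{i} \to 0$ for every $\rho \in [0,1)$. Since $z_{\rho}$ corresponds to $\eta_{\rho}$ under $T_{z}$, one obtains
\[
f_{\varphi,z}(\rho) = r(\varphi(z_{\rho})) = |\varphi^{*} T_{w}|_{\eta_{\rho}} = \max_{i \ge 1} |a_{i}|\rho^{i}.
\]

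From this formula, continuity and the piecewise $|k^{*}|$-monomial structure follow by a standard Newton polygon analysis. For every $\rho_{0} \in [0,1)$, the convergence of the series implies that only finitely many indices can realize, or come close to, the maximum in a neighborhood of $\rho_{0}$; locally $f_{\varphi,z}$ therefore agrees with the maximum of finitely many functions of the form $|a_{i}|\rho^{i}$. This maximum is continuous and piecewise $|k^{*}|$-monomial, with breakpoints at the values $\rho = (|a_{i}|/|a_{j}|)^{1/(j-i)} \in |k^{*}|$ where two indices tie. At such a breakpoint, the right degree is the \emph{largest} index $i^{+}(\rho)$ attaining the maximum at $\rho$, because among competing monomials $|a_{i}|\rho^{i}$ the one with larger exponent dominates immediately to the right of $\rho$.

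The core of the argument is then the identification $i^{+}(\rho) = n_{\varphi}(z_{\rho})$. For $\rho = 0$, $i^{+}(0)$ is the smallest index $i$ with $a_{i} \ne 0$, which is exactly the order of vanishing of $\varphi^{*} T_{w}$ at the origin and hence the ramification index $n_{\varphi}(z)$. For $\rho > 0$ the natural tool is Weierstrass preparation: restricting to the closed disc $D(0,\rho)$ (in the type~2 case, where $\rho \in |k^{*}|$), one factors $\varphi^{*} T_{w} = P(T_{z}) \cdot u(T_{z})$ with $P$ monic of degree $i^{+}(\rho)$ and $u$ a unit. Passing to the residues of the Gauss valuations, $u$ reduces to a unit and $P$ to a polynomial of degree $i^{+}(\rho)$, which yields $[\Hs(z_{\rho}) \colon \Hs(\varphi(z_{\rho}))] = i^{+}(\rho)$ by a direct computation on $\tilde k$-rational function fields. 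For a type~3 point, the maximum is uniquely attained (because $\rho \notin |k^{*}|$); after pulling out the dominant monomial, an elementary computation of the ramification indices $|\Hs(z_{\rho})^{*}/\Hs(\varphi(z_{\rho}))^{*}|$ again gives $i^{+}(\rho)$.

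The main obstacle is the last identification in the type~2 case: showing that the Weierstrass degree really equals the residue-field extension degree, which requires controlling the reduction of the unit factor $u$ and the reduction of $P$ modulo the maximal ideal of the valuation ring of $z_{\rho}$. This is standard in non-archimedean analysis but is the one step of the proof that is not pure manipulation of the Newton polygon. Everything else is bookkeeping around the explicit formula for $f_{\varphi,z}$.
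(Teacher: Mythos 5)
Your proof is correct, but there is nothing in the paper to compare it against: the authors state this lemma with a citation (\cite[Lemma 2.2.5]{TemkinHerbrand}) in place of a proof, so you have supplied the standard self-contained argument by coordinate representation and valuation polygon --- precisely the kind of argument the authors allude to just afterwards for Lemma~\ref{lem:imagepreimage}. Your computation $f_{\varphi,z}(\rho)=\max_{i\ge 1}|a_i|\rho^i$ is right (it uses $a_0=0$ and the fact that the image disc contains the origin, so its radius equals $|T_w|$ at the image point), and the local-finiteness and convexity analysis correctly yields continuity, piecewise $|k^*|$-monomiality, break-points in $|k^*|$ (by divisibility of $|k^*|$, as $k$ is algebraically closed), and right degree equal to the largest index $i^+(\rho)$ achieving the maximum. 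For the step you rightly single out --- identifying $i^+(\rho)$ with $[\Hs(z_\rho):\Hs(\varphi(z_\rho))]$ --- note that your computation on residue fields (resp.\ on value groups in the type~3 case) a priori only computes $[\widetilde{\Hs(z_\rho)}:\widetilde{\Hs(\varphi(z_\rho))}]$ (resp.\ the ramification index); to conclude that this equals the degree of the extension of completed fields you must invoke the stability of $\Hs(\varphi(z_\rho))$, which holds since $k$ is algebraically closed and is recalled in the paper just before Lemma~\ref{lem:phiyK}. Alternatively, one can bypass stability entirely: the restricted finite map $D(z,\rho)\to D(\varphi(z),s)$ has degree equal to the Weierstrass degree $i^+(\rho)$, and $z_\rho$ is its only preimage of the Shilov point of the target (each maximal open subdisc of $D(z,\rho)$ maps into a maximal open subdisc of $D(\varphi(z),s)$), so the entire degree is carried by the extension $\Hs(z_\rho)/\Hs(\varphi(z_\rho))$. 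With either patch your argument is complete.
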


\begin{remark}
As explained above, since~$f_{\varphi,z}$ is continuous and piecewise $|k^*|$-monomial, the break-points of~$f_{\varphi,z}$ belong to $|k| \cap [0,1)$. In other words, the points where the restriction of~$n_{\varphi}$ to~$l_{z}$ is not locally constant are of type~2.
\end{remark}

Another related function is the following.

\begin{definition}\label{N function} 
Let $\vphi:D_2\to D_1$ be a finite morphism of strict open discs. For $z\in D_1(k)$ and $s \in (0,1]$, we denote by~$N_{\varphi,z}(s)$ the number of connected components of the set $\vphi^{-1}(D(z,s^-))$.
\end{definition}

\begin{defn}[\protect{\cite[2.3.2]{TemkinHerbrand}}]
Let $\vphi:D_2\to D_1$ be a finite morphism of strict open discs. We say that $\vphi$ is a radial morphism if the induced functions $f_{\vphi,z}$ are equal for all $z \in D_{2}(k)$. In this case, the function $f_\vphi:=f_{\vphi,z}$ is called the profile function of~$\vphi$.
\end{defn}

The following result is well-known and can easily be proved by using a coordinate representation of the morphism and studying the associated valuation polygon. 

\begin{lemma}\label{lem:imagepreimage}
Let $\vphi$ be a finite morphism between open discs. Then, the image of a subdisc is a subdisc and the preimage of a subdisc is a finite union of subdiscs. Furthermore, if $\vphi$ is radial, all the preimages of a given subdisc have the same radius.
\qed
\end{lemma}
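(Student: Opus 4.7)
My plan is to derive all three assertions from an explicit analysis of $\vphi$ in coordinates: identify both $D_{1}$ and $D_{2}$ with $D(0, 1^-)$, so that $\vphi$ corresponds to a convergent power series $P(T)$ with $P(D(0, 1^-)) \subseteq D(0, 1^-)$.

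For the image of a subdisc $D(z, \rho^-) \subseteq D_{2}$, I would translate in source and target so that $z = 0$ and $P(0) = 0$, writing $P(T) = \sum_{i \ge 1} a_{i} T^{i}$. Set $g(\rho') := \sup_{i \ge 1} |a_{i}| (\rho')^{i}$. This supremum is attained (since $P$ converges on $D(0,1^-)$), and $g$ is continuous and \emph{strictly} increasing on $(0, 1)$: if $m$ realizes $g(\rho')$, then $g(\rho') = |a_{m}|(\rho')^{m} < |a_{m}| \rho^{m} \le g(\rho)$ whenever $\rho' < \rho$. Combined with the non-archimedean estimate $|P(y)| \le g(|y|)$ for $|y| < \rho$, this gives $P(D(0, \rho^-)) \subseteq D(0, g(\rho)^-)$. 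Conversely, given $w$ with $|w| < g(\rho)$, continuity of $g$ provides $\rho' < \rho$ with $|w| \le g(\rho')$, and the Newton polygon of $P(T) - w$ then has first slope at most $\log \rho'$, producing a root of absolute value $\le \rho'$, hence in $D(0, \rho^-)$.

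For the preimage of a subdisc $D(w, \sigma^-) \subseteq D_{1}$, I observe that $\vphi^{-1}(D(w, \sigma^-))$ is open in $D_{2}$ with at most $\deg(\vphi)$ connected components, each admitting a finite surjection onto $D(w, \sigma^-)$. I would show each such component is itself an open disc by exhausting $D(w, \sigma^-)$ with affinoid subdiscs $\overline{D}(w, \sigma')$, $\sigma' \nearrow \sigma$, and applying Weierstrass preparation to $P(T) - w$ near each preimage of $w$: this exhibits $\vphi^{-1}(\overline{D}(w, \sigma'))$ as a finite disjoint union of closed subdiscs of $D_{2}$ whose combinatorics stabilize for $\sigma'$ close enough to $\sigma$, and passing to the limit yields a decomposition $\vphi^{-1}(D(w, \sigma^-)) = \bigsqcup_{i=1}^{m} D(y_{i}, \rho_{i}^-)$. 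This is the main technical step, and is where the hypothesis that both source and target are true discs (ruling out, for instance, annular components in the preimage) plays its role.

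For the final assertion, note that each $D(y_{i}, \rho_{i}^-)$ is \emph{maximal} among subdiscs of $D_{2}$ mapping into $D(w, \sigma^-)$, so its Shilov boundary point $y_{i, \rho_{i}}$ must map to $w_{\sigma}$. Through the identification $f_{\vphi, y_{i}}(\rho) = r(\vphi(y_{i, \rho}))$ from Subsection~\ref{radial discs}, this translates to $f_{\vphi, y_{i}}(\rho_{i}) = \sigma$. Radiality gives $f_{\vphi, y_{i}} = f_{\vphi}$ for all~$i$, and the strict monotonicity of $f_{\vphi}$ (visible e.g.\ from Lemma \ref{lem:behaviourfz}, or directly from the strict monotonicity of $g$ above) then forces $\rho_{i} = f_{\vphi}^{-1}(\sigma)$, which is independent of $i$.
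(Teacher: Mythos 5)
Your proposal is correct and follows exactly the route the paper indicates: the paper gives no written proof, merely remarking that the lemma ``can easily be proved by using a coordinate representation of the morphism and studying the associated valuation polygon,'' which is precisely the Newton-polygon/coordinate analysis you carry out. Your treatment of the image (via the strictly increasing growth function $g$) and of the radii of the preimage components (via maximality and strict monotonicity of the profile function) fills in the standard details correctly; the only step you leave slightly compressed is why each connected component of $\vphi^{-1}(D(w,\sigma^-))$ is a genuine disc, which follows most cleanly from the observation that $\rho\mapsto |(P-w)(\eta_{z,\rho})|$ is non-decreasing, so the set $\{|P-w|<\sigma\}$ is downward closed in the tree of $D_2$.
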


Let us now explain the relationship with~$N_{\varphi,z}$.

\begin{lemma}\label{lem:relation}
 Let $\vphi:D_2\to D_1$ be a finite radial morphism of strict open discs of degree~$d$. The following properties hold.
 \begin{enumerate}[(i)]
  \item For each $x\in D_{1}$ and each $y\in \varphi^{-1}(x)$, we have $\# \varphi^{-1}(x) = d/n_{\varphi}(y)$.
  \item For each $z\in D_{1}(k)$ and each $r\in (0,1)$, we have 
  \begin{equation}
  N_{\varphi,z}(r) = \min (\{\# \varphi^{-1}(z_{\rho}) \mid \rho \in [0,r)\}) = \frac{d}{\deg^-_{f_{\varphi}}(f_{\varphi}^{-1}(r))}.
  \end{equation}
  \item The function~$N_{\varphi,z}$ is independent of $z\in D_{1}(k)$ and  its points of discontinuity are exactly the break-points of~$f^{-1}_{\varphi}$. 
  \end{enumerate}
\end{lemma}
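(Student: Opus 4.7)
For part~(i), the key input is that, under the radial hypothesis, the profile function $f_{\varphi,z}$ equals $f_{\varphi}$ for every $z \in D_{2}(k)$. Given any point $x \in D_{1}$ and any preimage $y \in \varphi^{-1}(x)$, I would pick a rational $y' \in D_{2}(k)$ with $y \in l_{y'}$, say $y = y'_{\sigma}$. Then $\varphi(y) = (\varphi(y'))_{f_{\varphi}(\sigma)} = x$, so by strict monotonicity of $f_{\varphi}$, $\sigma = f_{\varphi}^{-1}(r(x))$ depends only on $x$. Lemma~\ref{lem:behaviourfz} then gives $n_{\varphi}(y) = \deg^{+}_{f_{\varphi}}(\sigma)$, independent of~$y$. (The type-$4$ case requires an additional continuity argument from nearby type-$2/3$ points.) The identity $\sum_{y \in \varphi^{-1}(x)} n_{\varphi}(y) = d$ then yields $\#\varphi^{-1}(x) = d/n_{\varphi}(y)$.

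For part~(ii), combining (i) with Lemma~\ref{lem:behaviourfz} yields
\[
\#\varphi^{-1}(z_{\rho}) \;=\; \frac{d}{\deg^{+}_{f_{\varphi}}(f_{\varphi}^{-1}(\rho))}
\]
for every $\rho \in [0,1)$. Expressing $f_{\varphi}(\sigma)$ at any rational base point as the pointwise maximum of monomials $|a_{i}|\sigma^{i}$ coming from the Taylor expansion of~$\varphi$ shows that its slope is non-decreasing, so $\#\varphi^{-1}(z_{\rho})$ is non-increasing in~$\rho$. Its minimum over $\rho \in [0,r)$ thus equals $\lim_{\rho \to r^{-}} \#\varphi^{-1}(z_{\rho}) = d/\deg^{-}_{f_{\varphi}}(f_{\varphi}^{-1}(r))$, establishing the second equality. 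For the first, Lemma~\ref{lem:imagepreimage} lets us write $\varphi^{-1}(D(z,r^{-})) = \sqcup_{j=1}^{N} U_{j}$, where each restriction $\varphi|_{U_{j}}\colon U_{j} \to D(z,r^{-})$ is a finite radial morphism of some degree $d_{j}$, with $\sum_{j} d_{j} = d$. After the natural normalizations, its profile function is $\tilde f_{j}(\sigma) = f_{\varphi}(f_{\varphi}^{-1}(r)\sigma)/r$, whose slope just below~$1$ equals $\deg^{-}_{f_{\varphi}}(f_{\varphi}^{-1}(r))$. Invoking the fact that the terminal slope of the profile function of a finite radial morphism of open discs of degree~$e$ is~$e$ (\cf\cite{TemkinHerbrand}), we conclude $d_{j} = \deg^{-}_{f_{\varphi}}(f_{\varphi}^{-1}(r))$ for every~$j$, and hence $N = d/d_{j} = d/\deg^{-}_{f_{\varphi}}(f_{\varphi}^{-1}(r))$.

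Part~(iii) is then immediate from the formula in~(ii): $N_{\varphi,z}(r)$ depends only on~$r$, and its jumps occur exactly where $\deg^{-}_{f_{\varphi}}(f_{\varphi}^{-1}(r))$ jumps, which is at the images under $f_{\varphi}$ of the break-points of~$f_{\varphi}$---by definition the break-points of~$f_{\varphi}^{-1}$. The main obstacle I anticipate is the ``terminal slope equals degree'' fact invoked in~(ii); although intuitively clear (preimages collapse to the Shilov boundary as $\sigma \to 1^{-}$), a clean proof needs either a Weierstrass-preparation description of~$\varphi$ in a normalizing coordinate or a direct asymptotic analysis of the slope of $\tilde f_{j}$ near~$1$.
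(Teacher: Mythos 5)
Your argument is correct, and for part~(i), part~(iii), and the second equality in part~(ii) it follows essentially the same path as the paper: radiality forces all preimages of~$x$ to have the same radius, Lemma~\ref{lem:behaviourfz} then makes $n_{\varphi}$ constant on the fibre, and $\sum_{y\in\varphi^{-1}(x)} n_{\varphi}(y)=d$ gives~(i); the formula $\#\varphi^{-1}(z_{\rho}) = d/\deg^+_{f_{\varphi}}(f_{\varphi}^{-1}(\rho))$ together with convexity of the valuation polygon gives the second equality in~(ii). (Both your proof and the paper's gloss over the fact that Lemma~\ref{lem:behaviourfz} only speaks of points on canonical paths, so type~4 points need the extra word you flag; this is a shared, minor issue.) Where you genuinely diverge is the first equality in~(ii): the paper simply observes, via Lemma~\ref{lem:imagepreimage}, that each connected component of $\varphi^{-1}(D(z,r^-))$ contains at least one point over~$z_{\rho}$ for every $\rho<r$ and exactly one for $\rho$ close to~$r$, so that $N_{\varphi,z}(r)$ is the eventual (hence minimal) value of $\#\varphi^{-1}(z_{\rho})$; you instead restrict $\varphi$ to each component, compute the rescaled profile function $\tilde f_{j}$, and invoke the fact that the terminal slope of the profile of a degree-$e$ morphism of open discs is~$e$. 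Your route is heavier but computes each $d_{j}$ explicitly, which is information the paper does not need here; the fact you rely on is genuine (it follows from Lemma~\ref{lem:behaviourfz} applied to $\varphi|_{U_{j}}$ once one knows $n_{\varphi|_{U_j}}(w_\sigma)=d_j$ for $\sigma$ near the boundary, or from a direct Newton/valuation polygon argument in a normalizing coordinate, exactly as you anticipate), but note that it is of the same nature as the ``exactly one preimage near the boundary'' claim that the paper uses instead --- so you have not really avoided that input, only repackaged it. Part~(iii) is handled identically in both proofs.
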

\begin{proof}
Let~$x \in D_{1}$. Since the morphism~$\varphi$ is radial, all points over~$x$ have the same radius. By Lemma~\ref{lem:behaviourfz}, $n_{\varphi}$ then takes the same value on all of them. Point (i) follows.

Let us now prove point~(ii). Let $z\in D_{1}(k)$ and let $r\in (0,1)$. By Lemma~\ref{lem:imagepreimage}, the preimage of the open disc $D(z,r)^-$ is a disjoint union of open discs. Such an open disc contains only one point over~$z_{\rho}$ for $\rho \in (0,r)$ close enough to~$r$. The first equality follows. The second one follows from point~(i) and Lemma~\ref{lem:behaviourfz}.

Point~(iii) follows from point~(ii).
\end{proof}

As a consequence, when the morphism~$\varphi$ is radial, we often write~$N_{\varphi}$ instead of~$N_{\varphi,z}$.

\begin{lemma}\label{lem:image of profile}
 Let $\vphi:D_2\to D_1$ be a finite radial morphism of open unit discs with profile function $f$. Let $0=t_0<t_1<\dots<t_n=1$ be the break-points of~$f$ and, for $i=1,\dots,n$, let~$d_i$ denote the degree of~$f$ over the interval $(t_{i-1},t_i)$. Let $z\in D_2(k)$ and let $T_{\vphi(z)}=\vphi(T_z)=\sum_{i\geq 0}a_iT^i_z$ be the coordinate representation of $\vphi$ with respect to $T_z$ and $T_{\vphi(z)}$. Then, for $i=1,\dots,n$, we have
 \begin{equation}\label{coefficient relation}
  |a_{d_{i}}|=|a_{d_{1}}| \prod_{j=1}^{i-1} t_j^{d_j-d_{j+1}}.
  \end{equation}
  For each $m=0,\dots,n-1$ and each $r \in (t_{m},t_{m+1}]$, we have
  \begin{equation}\label{profileimage}
  f(r)=|a_{d_{1}}|r^{d_{m+1}}\prod_{j=1}^mt_j^{d_{j}-d_{j+1}}.
\end{equation}
In particular, for $i=1,\dots,n$, we have
\begin{equation}\label{profileimage2}
f(t_i)=|a_{d_{1}}|t_i^{d_i}\prod_{j=1}^{i-1}t_j^{d_j-d_{j+1}}.
\end{equation}
\end{lemma}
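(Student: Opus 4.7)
The strategy is to extract the profile function from the valuation polygon of the power series expansion of $\vphi$, and then read off the three formulas by standard Newton-polygon bookkeeping. As a preliminary, note that since $T_z$ and $T_{\vphi(z)}$ are normalizing coordinates sending $z$ and $\vphi(z)$ respectively to~$0$, we have $a_0 = 0$. By Lemma~\ref{lem:imagepreimage}, $\vphi(D(z,\rho))$ is a subdisc of~$D_1$, and since it contains $\vphi(z)$, the image $\vphi(z_\rho)$ lies on the canonical path $l_{\vphi(z)}$. Hence the infimum defining $r(\vphi(z_\rho))$ is achieved at $\vphi(z)$ itself, and the standard evaluation of a convergent power series at a Shilov point yields
\[
f(\rho) \;=\; r(\vphi(z_\rho)) \;=\; |T_{\vphi(z)}(\vphi(z_\rho))| \;=\; \max_{i \geq 1}|a_i|\,\rho^i.
\]

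Next, I would match this expression with the given combinatorics of $f$. The function $\rho \mapsto \max_{i \geq 1}|a_i|\,\rho^i$ is piecewise $|k^*|$-monomial, and on each maximal interval where a single term $|a_d|\,\rho^d$ strictly dominates, $f$ has logarithmic slope~$d$. Since, by hypothesis, the slope on $(t_{i-1}, t_i)$ equals $d_i$, the dominating index on that interval is exactly $d_i$, so that
\[
f(\rho) \;=\; |a_{d_i}|\,\rho^{d_i} \qquad \text{for } \rho \in (t_{i-1}, t_i].
\]

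Finally, I would derive the three formulas by continuity and induction. Continuity of $f$ at each interior break-point $t_i$ (for $1 \le i \le n-1$) gives $|a_{d_i}|\,t_i^{d_i} = |a_{d_{i+1}}|\,t_i^{d_{i+1}}$, equivalently $|a_{d_{i+1}}| = |a_{d_i}|\,t_i^{d_i - d_{i+1}}$. A straightforward induction on $i$ starting from $|a_{d_1}|$ produces~\eqref{coefficient relation}; substituting this into the formula $f(r) = |a_{d_{m+1}}|\,r^{d_{m+1}}$ valid on $(t_m, t_{m+1}]$ yields~\eqref{profileimage}, and specializing to $r = t_i$ with $m = i-1$ gives~\eqref{profileimage2}. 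The only delicate step is the identification $f(\rho) = |T_{\vphi(z)}(\vphi(z_\rho))|$ rather than a mere inequality: this relies on $\vphi(D(z,\rho))$ being a subdisc of $D_1$ containing $\vphi(z)$, which is exactly the content of Lemma~\ref{lem:imagepreimage}. Once this is in hand, the rest is pure Newton-polygon bookkeeping and no further use of the radiality hypothesis is needed.
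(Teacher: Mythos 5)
Your proposal is correct and follows essentially the same route as the paper's proof: identify $f(\rho)$ with $|\vphi(T_z)|_\rho=\max_i|a_i|\rho^i$, read off the dominant term $|a_{d_{i+1}}|\rho^{d_{i+1}}$ on each interval from the degree hypothesis, then use continuity at the break-points and induction to get \eqref{coefficient relation} and substitute to obtain \eqref{profileimage} and \eqref{profileimage2}. The only difference is that you spell out more carefully (via Lemma~\ref{lem:imagepreimage} and $a_0=0$) why $r(\vphi(z_\rho))$ equals the sup-norm of the coordinate representation, a step the paper leaves implicit.
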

\begin{proof}
 For $i=0,\dots,n-1$ and $\rho\in (t_{i},t_{i+1})$, we have $f(\rho)=|\vphi(T_z)|_{\rho}=\max|a_i|\rho^i=|a_{d_{i+1}}|\rho^{d_{i+1}}$, because $f$ is $|k^*|$-monomial over $(t_i,t_{i+1})$ of degree $d_{i+1}$. 
 
 Because of the continuity of $|\vphi(T_z)|_{(\cdot)}$ at~$t_{i}$, for $i=1,\dots,n-1$, we deduce that $|a_{d_i}|t_i^{d_i}=|a_{d_{i+1}}|t_i^{d_{i+1}}$, which implies $|a_{d_{i+1}}|=|a_{d_i}|t_i^{d_i-d_{i+1}}$. Equation \eqref{coefficient relation} then follows by induction.
 
 Equation \eqref{profileimage} follows by noting that $f(r)=|\vphi(T_z)|_r=|a_{d_{m+1}}|r^{d_{m+1}}$ and using~\eqref{coefficient relation}. 
\end{proof}

\subsubsection{Profile functions in the general case}\label{sec:profile}

Let $\varphi \colon Y \to X$ be a finite morphism between quasi-smooth $k$-analytic curves. We denote by~$Y^{(2)}$ (resp.~$Y^\hyp$) the set of type~2 (resp. non-rigid) points of~$Y$. 

For every point~$y$ in~$Y$, we denote by~$\cD_{y}$ the set of connected components of~$Y \setminus\{y\}$ whose closure contains~$y$.

\begin{theorem}\label{thm:radializingskeleton}
Let~$y \in Y^{(2)}$. There exists a finite subset~$F_{y}$ of~$\cD_{y}$ and a map $^T\!f_{\varphi}^y: [0,1] \to [0,1]$ such that, for each~$D$ in $\cD_{y} \setminus F_{y}$, $D$ and~$\varphi(D)$ are strict open discs and the induced morphism $D \to \varphi(D)$ is radial with profile function~$^T\!f_{\varphi}^y$.
\end{theorem}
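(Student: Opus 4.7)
The plan is to reduce to a local-coordinate computation at the type~$2$ points $y$ and $x=\vphi(y)$, and then exploit that for cofinitely many residual directions at~$y$, the relevant Taylor coefficients of the power series representing $\vphi$ attain their generic Shilov-point norms.

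\textbf{Step 1 (Local model).} Since $y$ is of type~$2$ and $\vphi$ is finite, we may choose strict affinoid neighborhoods $Y_0\subset Y$ of~$y$ and $X_0\subset X$ of~$x$, both isomorphic to the closed unit disc $D(0,1)$ and having $y$ and~$x$ as their unique Shilov points, such that $\vphi$ restricts to a finite morphism $Y_0\to X_0$. Fix such identifications, with coordinates $T_Y$ on~$Y_0$ and $T_X$ on~$X_0$; then $\vphi|_{Y_0}$ is given by a power series $P(T_Y)=\vphi^\ast(T_X)$ with $\|P\|_{\eta_{0,1}}\le 1$. By Weierstrass preparation (after absorbing a global invertible factor), we may replace $P$ by a polynomial of degree~$d$ for the purpose of computing profile functions on the open subdiscs of~$Y_0$, so that only finitely many Taylor coefficients $c_i := P^{(i)}/i!$ are non-zero.

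\textbf{Step 2 (Construction of $F_y$).} By the semistable structure of $Y_0\setminus\{y\}$, together with the fact that at most one component of $\cD_y$ fails to lie in~$Y_0$, all but finitely many components of $\cD_y$ are strict open discs of the form $D(\alpha,1^-)\subset Y_0$ indexed by residues $\tilde\alpha\in \tilde k$ via any lift~$\alpha$. For each $i$ with $c_i\ne 0$, set $s_i := \|c_i\|_{\eta_{0,1}}\in |k^*|$ and pick $\lambda_i\in k^*$ with $|\lambda_i|=s_i$; the reduction $\widetilde{c_i/\lambda_i}\in \tilde k[\tilde T]$ is a non-zero polynomial, so $|c_i(\alpha)|<s_i$ precisely when $\tilde\alpha$ is one of its finitely many zeros. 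We define $F_y$ as the union of the finitely many non-disc components of $\cD_y$ and the finitely many disc components whose residue lies in the union over~$i$ of those exceptional sets; this is finite.

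\textbf{Step 3 (Profile computation).} Fix $D=D(\alpha,1^-)\in\cD_y\setminus F_y$ corresponding to $\tilde\alpha$, and set $\lambda := \max_i s_i\in|k^*|$. Since every $z\in D(k)$ shares the residue $\tilde\alpha$, we have $|c_i(z)| = s_i$ for every such $z$ and every~$i$, whence $|P(T_Y)-P(\alpha)|_r = \max_i s_i\, r^i$ for each $r\in[0,1)$ and $\vphi(D)=D(P(\alpha),\lambda^-)$ is a strict open disc. Taking normalizing coordinates $T_Y-\alpha$ on the source and $(T_X-P(\alpha))/\mu$ on the target (with $\mu\in k^*$, $|\mu|=\lambda$), the restricted morphism is given by
\[
\frac{P(T_Y)-P(\alpha)}{\mu} \;=\; \sum_{i=1}^{d} \frac{c_i(\alpha)}{\mu}\,(T_Y-\alpha)^i,
\]
with coefficient norms $|c_i(\alpha)/\mu| = s_i/\lambda$ independent of~$\alpha$. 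By Lemma~\ref{lem:image of profile} the break-points, degrees, and normalized coefficient norms of the profile function are determined by the sequence $(s_i/\lambda)_i$ alone, hence are the same for every $D\notin F_y$. The same computation performed with any $z\in D(k)$ in place of~$\alpha$ gives $f_{\vphi,z}=f_{\vphi,\alpha}$, so the restricted morphism $D\to\vphi(D)$ is radial; its common profile function is the desired map $^T\!f_\vphi^y$.

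\textbf{Main obstacle.} The critical point is the simultaneous independence from~$\tilde\alpha$ of the norms $|c_i(\alpha)|$ for all relevant indices~$i$. Without the Weierstrass reduction of Step~1, one would have to control infinitely many residual conditions at once, and finiteness of $F_y$ would be unclear. A secondary subtlety is to identify $\vphi(D)$ with the strict open disc $D(P(\alpha),\lambda^-)$: this relies on the coincidence, for generic~$\tilde\alpha$, between the Gauss norm of $P-P(\alpha)$ on each subdisc $D(\alpha,r)$ and the generic quantity $\max_i s_i r^i$, together with Lemma~\ref{lem:imagepreimage}.
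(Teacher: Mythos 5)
Your proof breaks down at Step 1, and the failure is exactly at the point where the statement is hard. A type~2 point of a general quasi-smooth $k$-analytic curve does \emph{not} admit a strict affinoid neighbourhood isomorphic to the closed unit disc with that point as Shilov point. If $W\cong D(0,1)$ has Shilov boundary $\{y\}$, then $\tilde W\cong \A^1_{\tilde k}$ and $\sC_y\cong\mathbb{P}^1_{\tilde k}$; so your reduction already fails whenever the residue curve $\sC_{y}$ has positive genus (e.g.\ $Y$ the analytification of a smooth projective curve with good reduction of genus $\ge 1$ and $y$ the point reducing to the generic point), and it fails even with $g(y)=0$ for points on a nontrivial skeleton: for $y$ a type~2 point on the skeleton of a Tate curve, any affinoid with Shilov boundary $\{y\}$ is contained in $\{|T|=1\}$ and has reduction $\mathbb{G}_m$, never $\A^1_{\tilde k}$, and is not a disc. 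Without a closed-disc neighbourhood there is no global coordinate in which $\varphi$ is a single power series $P(T_Y)$, so the Taylor-coefficient and reduction argument of Steps 2--3 has nothing to act on; the uniformity of the profile function across the infinitely many components of $\cD_y$ is precisely what must be established by other means. The paper does not prove this either: it derives the statement from Temkin's existence of a radializing skeleton (\cite[Theorem 3.3.11]{TemkinHerbrand}), whose proof replaces your global power series by the different function and its piecewise $|k^*|$-monomiality along the curve. What you have written is, in substance, a proof of the special case where $Y$ and $X$ are domains of $\A^1_k$ containing closed-disc neighbourhoods of $y$ and $x$ — a case that is indeed standard. (Note also that the paper's own coordinate-free device, Lemma~\ref{lem:fiber}, concerns the fibre $\pi_K^{-1}(y)$ after extension of scalars, not the components of $Y\setminus\{y\}$, and the bridge between the two, Proposition~\ref{prop:comparison}, itself invokes Theorem~\ref{thm:radializingskeleton}; so it cannot be used to patch your argument without circularity.)

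Within the disc case there is a secondary flaw: the Weierstrass reduction does not do what you want. Writing $P=QU$ with $Q$ a distinguished polynomial and $U$ a unit does not let you replace $P$ by $Q$, because $P-P(\alpha)=QU-Q(\alpha)U(\alpha)$ is not controlled by $Q-Q(\alpha)$, and the profile function is computed from the former. The correct fix is to observe that in $\max_{i\ge 1}|c_i(z)|r^i$ only the indices $i\le d=\deg\varphi$ can realize the maximum: the largest index attaining it counts the preimages in $D(z,r)$ of a generic point of the image disc, hence is bounded by $d$ (this is essentially Lemma~\ref{lem:behaviourfz}). Then only $c_1,\dotsc,c_d$ impose residual conditions and your set $F_y$ is finite. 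With that repair, Steps 2 and 3 are correct in the affine-line case; but the theorem as stated, for arbitrary quasi-smooth curves, remains unproved by your argument.
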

\begin{proof}
It is a consequence of the existence of a so-called ``radializing skeleton'', see \cite[Theorem 3.3.11]{TemkinHerbrand}. Note that this reference only handles compact strictly $k$-analytic curves, but this easily implies the general case.
\end{proof}

In \cite[Section~3.4.1]{TemkinHerbrand}, for each $y \in Y^{(2)}$, Michael Temkin defines the \textit{profile function} of~$\varphi$ at~$y$ to be~$^T\!f_{\varphi}^y$.

Putting together all those function gives a map $^T\!f_{\varphi}^{(2)} : Y^{(2)} \times [0,1] \to [0,1]$. By \cite[Theorem 3.4.8]{TemkinHerbrand}, it extends uniquely to a map $^T\!f_{\varphi} : Y^\hyp \times [0,1] \to [0,1]$ that is piecewise $|k^\times|$-monomial (in an appropriate sense) and continuous on each interval inside~$Y^\hyp$.

\begin{lemma}\label{lem:fiber}
Let $y\in Y^\hyp$ and set $x:=\vphi(y)$. Let~$K$ be an algebraically closed complete valued extension of~$\sH(y)$. Then, the connected components of $\pi^{-1}_{K}(y) \setminus\{\sigma_{K}(y)\}$ and $\pi^{-1}_{K}(x) \setminus\{\sigma_{K}(x)\}$ are strict open discs.

Let~$D$ be a connected component of $\pi^{-1}_{K}(x) \setminus\{\sigma_{K}(x)\}$. The set $\varphi_{K}^{-1}(D) \cap \pi^{-1}_{K}(y)$ is a disjoint union of connected components $D'_{1},\dotsc,D'_{\ks}$ of $\pi^{-1}_{K}(y) \setminus\{\sigma_{K}(y)\}$ and, for each $j=1,\dotsc,\ks$, the induced morphism $\psi_{j} \colon D'_{j} \to D$ is radial. Moreover, the integer~$\ks$, the functions~$f_{\psi_{j}}$ and~$N_{\psi_{j}}$ and the degree of~$\psi_{j}$ only depend on~$y$ (\textit{i.e.} not on~$K$, $D$ or~$j$).
\end{lemma}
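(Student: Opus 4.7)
The plan is to combine the standard description of fibers under extension of scalars with Temkin's radializing skeleton theorem (Theorem~\ref{thm:radializingskeleton}) applied after base change.

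For the first assertion, I would invoke general structure theory: since $y$ is non-rigid and $K/\sH(y)$ is a complete algebraically closed extension, the peakedness of~$y$ yields~$\sigma_K(y)$ as the unique distinguished (Shilov-type) point of the fiber~$\pi_K^{-1}(y)$, and the complement $\pi_K^{-1}(y)\setminus\{\sigma_K(y)\}$ is a disjoint union of strict open discs; a reference for this analysis is \cite[Section~2.2]{Poi-Pul2}. The same description applies to~$\pi_K^{-1}(x)$, since $K$ contains $\sH(x)\subset\sH(y)$.

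For the decomposition $\varphi_K^{-1}(D) \cap \pi_K^{-1}(y) = D'_1\sqcup\dotsb\sqcup D'_\ks$, I would use that the base change $\varphi_K \colon Y_K \to X_K$ is finite and commutes with the projections~$\pi_K$, so it restricts to a finite map $\pi_K^{-1}(y) \to \pi_K^{-1}(x)$. Functoriality of the generic section (\cite[Corollaire~3.7]{Angie}) gives $\varphi_K \circ \sigma_K = \sigma_K \circ \varphi$, whence $\varphi_K(\sigma_K(y)) = \sigma_K(x)$. Consequently $\varphi_K^{-1}(D) \cap \pi_K^{-1}(y)$ avoids~$\sigma_K(y)$, and by finiteness of~$\varphi_K$ is a finite disjoint union of components of $\pi_K^{-1}(y)\setminus\{\sigma_K(y)\}$, each a strict open disc mapping to~$D$ by a finite morphism $\psi_j$.

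For the radial property, profile functions, and independence, I would apply Theorem~\ref{thm:radializingskeleton} to~$\varphi_K$ at the type-2 point $\sigma_K(y) \in (Y_K)^{(2)}$. Outside a finite exceptional set of branches at~$\sigma_K(y)$, each $\psi_j$ is radial with profile function~$^T\!f_{\varphi_K}^{\sigma_K(y)}$. The compatibility of profile functions under extension of scalars established in \cite[Section~3.4]{TemkinHerbrand} identifies $^T\!f_{\varphi_K}^{\sigma_K(y)}$ with~$^T\!f_\varphi^y$, giving independence of~$K$. A shared profile forces equality of the functions~$f_{\psi_j}$ and~$N_{\psi_j}$ and of the degrees~$\deg(\psi_j)$, and~$\ks$ is then determined by dividing the intrinsic degree of $\varphi_K|_{\pi_K^{-1}(y)}$ by this common degree—hence independence of $D$ and~$j$.

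The main obstacle will be to conclude radiality for \emph{every}~$D'_j$, not only those lying above a branch outside the exceptional set produced by Theorem~\ref{thm:radializingskeleton}. The key idea is that the finitely many potentially exceptional branches at~$y$ give rise to only finitely many branches at~$\sigma_K(y)$, whereas the base-changed residue curve provides a much richer family of geometric branches, all expected to be generic. I plan to argue, by a careful analysis of Temkin's radializing skeleton and using the invariance of the entire statement under further extension of scalars, that the exceptional set at~$\sigma_K(y)$ may be taken to be empty. This is the step where I expect the real technical work to lie.
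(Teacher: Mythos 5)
Your outline of the first two assertions (structure of the fibers via peakedness and \cite[Section~2.2]{Poi-Pul2}, and the decomposition of $\varphi_K^{-1}(D)\cap\pi_K^{-1}(y)$ into components via finiteness and $\varphi_K\circ\sigma_K=\sigma_K\circ\varphi$) matches the paper's proof. The divergence, and the problem, is in the third step. You propose to get radiality and the independence statements from Theorem~\ref{thm:radializingskeleton} applied to $\varphi_K$ at $\sigma_K(y)$, and you correctly identify the obstruction yourself: that theorem only gives radiality for components of $Y_K\setminus\{\sigma_K(y)\}$ outside a finite exceptional set $F_{\sigma_K(y)}$, and nothing guarantees that the specific components $D'_1,\dotsc,D'_{\ks}$ sitting over a \emph{given} $D$ avoid it. Your stated plan --- ``argue that the exceptional set at $\sigma_K(y)$ may be taken to be empty'' --- is not an argument; it is a restatement of what needs to be proved, and it is essentially the entire content of the lemma. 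Worse, the tool you would need to promote radiality from one generic component to \emph{all} components of the fiber is precisely the homogeneity statement you never supply. Note also that the paper's later Proposition~\ref{prop:comparison} does use the ``infinitely many components, so one of them is non-exceptional'' argument, but only \emph{after} Lemma~\ref{lem:fiber} has established that all fiber components behave identically; running your plan first risks circularity.

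The paper closes this gap by a different mechanism: after further extending scalars so that $K$ is maximally complete, \cite[Corollary~2.20]{Poi-Pul2} gives that the group $\mathrm{Gal}^c(K/k)$ of continuous $k$-linear automorphisms of $K$ acts transitively on the $K$-rational points of each $D'_j$ (which yields radiality of $\psi_j$ directly, since the action conjugates the functions $f_{\psi_j,z}$ for varying $z$ into one another) and transitively on the sets of connected components of $\pi_K^{-1}(y)\setminus\{\sigma_K(y)\}$ and of $\pi_K^{-1}(x)\setminus\{\sigma_K(x)\}$ (which yields independence of $D$ and $j$); independence of $K$ then follows by embedding two extensions of $\sH(y)$ into a common one. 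This Galois-action argument makes no appeal to the radializing skeleton at all. To repair your proof, replace the appeal to Theorem~\ref{thm:radializingskeleton} by this transitivity argument (or supply an equivalent homogeneity statement for the fiber); as written, the key step is missing.
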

\begin{proof}
Up to shrinking~$Y$ and~$X$ around~$y$ and~$x$, we may assume that $\vphi^{-1}(x)=\{y\}$. 

By \cite[Theorem~2.15]{Poi-Pul2}, the connected components of $\pi^{-1}_{K}(y) \setminus\{\sigma_{K}(y)\}$ are isomorphic to open discs with boundary~$\{y_{K}\}$. Since the point~$\sigma_{K}(y)$ has type~2, those discs are strict. The same result holds for $\pi^{-1}_{K}(x) \setminus\{\sigma_{K}(x)\}$. 

Let~$D$ be a connected component of $\pi^{-1}_{K}(x) \setminus\{\sigma_{K}(x)\}$. It follows from the continuity of~$\vphi_{K}$ that~$\varphi_{K}^{-1}(D)$ is a disjoint union of connected components $D'_{1},\dotsc,D'_{\ks}$ of $\pi^{-1}_{K}(y) \setminus\{\sigma_{K}(y)\}$. 

To prove that~$\psi_{j}$ is radial, we may extend the scalars and assume that~$K$ is maximally complete. By~\cite[Corollary~2.20]{Poi-Pul2}, the group $\textrm{Gal}^c(K/k)$ of continuous $k$-linear automorphisms of~$K$ acts transitively on the set of $K$-rational points of~$D'_{j}$ and the result follows. 

To prove that~$\ks$, $f_{\psi_{j}}$, $N_{\psi_{j}}$ and $\deg(\psi_{j})$ do not depend on~$D$ and~$j$, we may also assume that~$K$ is maximally complete. By \textit{ibid.}, $\textrm{Gal}^c(K/k)$ acts transitively on the set of connected components of $\pi^{-1}_{K}(y) \setminus\{\sigma_{K}(y)\}$ and $\pi^{-1}_{K}(x) \setminus\{\sigma_{K}(x)\}$ and the result follows.

It is clear that $\ks$, $f_{\psi_{j}}$, $N_{\psi_{j}}$ and $\deg(\psi_{j})$ are independent of~$K$ since any two complete valued extensions of~$\Hs(y)$ may be embedded into a common one.
\end{proof}

\begin{remark}
The statement still holds, with the same proof, if~$k$ has positive characteristic.
\end{remark}

\begin{notation}\label{not:fyNy}
In the setting of Lemma~\ref{lem:fiber}, we set $\ks_{\vphi}^y := \ks$, $f_{\varphi}^y := f_{\psi_{1}}$, $N_{\varphi}^y := N_{\psi_{1}}$ and $\ki_{\varphi}^{y} := \deg(\psi_{1})$. The map~$f_{\varphi}^y$ will be called the profile function of~$\varphi$ at~$y$. We may remove~$\vphi$ from the notation when it is clear from the context.
\end{notation}

\begin{lemma}\label{lem:profileK}
Let $y\in Y^\hyp$. Let~$L$ be an algebraically closed complete valued extension of~$\sH(y)$. Then, we have $\ks_{\varphi_{L}}^{\sigma_{L}(y)} = \ks_{\varphi}^y$, $f_{\varphi_{L}}^{\sigma_{L}(y)} = f_{\varphi}^y$, $N_{\varphi_{L}}^{\sigma_{L}(y)} = N_{\varphi}^y$ and $\ki_{\varphi_{L}}^{\sigma_{L}(y)} = \ki_{\varphi}^y$
\end{lemma}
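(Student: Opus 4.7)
The strategy is to find a single algebraically closed complete valued field $K$ that works as the auxiliary field in Lemma \ref{lem:fiber} for both sides of the desired identities, and then verify the two constructions produce the same data.

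\emph{Step 1 (Common auxiliary field).} Pick an algebraically closed complete valued extension $K$ of $\sH(\sigma_L(y))$. Since both $\sH(y)$ and $L$ embed naturally into $\sH(\sigma_L(y))$, the field $K$ is automatically an algebraically closed complete valued extension of both. By the independence of choice asserted in Lemma \ref{lem:fiber}, the quadruple $(\ks_\varphi^y,f_\varphi^y,N_\varphi^y,\ki_\varphi^y)$ may be computed via $K/\sH(y)$, and $(\ks_{\varphi_L}^{\sigma_L(y)},f_{\varphi_L}^{\sigma_L(y)},N_{\varphi_L}^{\sigma_L(y)},\ki_{\varphi_L}^{\sigma_L(y)})$ via $K/\sH(\sigma_L(y))$.

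\emph{Step 2 (Compatibility of base change).} Use the canonical identifications $(Y_L)_K\cong Y_K$ and $(X_L)_K\cong X_K$ coming from associativity of the completed tensor product. Under these, $(\varphi_L)_K=\varphi_K$. Moreover, the generic point satisfies $\sigma_{K/L}(\sigma_{L/k}(z))=\sigma_{K/k}(z)$ for $z=x,y$, as both sides are characterized by the universal multiplicative tensor norm on $\sH(z)\wt_k K$.

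\emph{Step 3 (Matching of fibers near the generic point).} The core point is to show that, under the identification $Y_K=(Y_L)_K$, the connected components of $\pi_{K/L}^{-1}(\sigma_L(y))\setminus\{\sigma_K(y)\}$ and of $\pi_{K/k}^{-1}(y)\setminus\{\sigma_K(y)\}$ are the same (and similarly for $x$). Since $\pi_{K/k}=\pi_{L/k}\circ\pi_{K/L}$, one inclusion $\pi_{K/L}^{-1}(\sigma_L(y))\subseteq\pi_{K/k}^{-1}(y)$ is clear. For the other direction, by \cite[Theorem~2.15]{Poi-Pul2} applied over $L$, the components of $\pi_{L/k}^{-1}(y)\setminus\{\sigma_L(y)\}$ are open discs with boundary $\sigma_L(y)$, so $\{\sigma_L(y)\}$ admits an open neighborhood $U$ in $Y_L$ with $U\cap\pi_{L/k}^{-1}(y)=\{\sigma_L(y)\}$; pulling back by $\pi_{K/L}$ shows $\pi_{K/L}^{-1}(\sigma_L(y))$ is open in $\pi_{K/k}^{-1}(y)$ in the neighborhood of $\sigma_K(y)$, which forces the branches attached to the common generic point to coincide.

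\emph{Step 4 (Conclusion).} Once the components $D$ and $D'_j$ in Lemma \ref{lem:fiber} are shown to be the same in both computations, the associated morphism $\psi_j\colon D'_j\to D$ is literally the same map, so its invariants $\ks$, $f_{\psi_1}$, $N_{\psi_1}$ and $\deg(\psi_1)$ agree.

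The main obstacle is Step 3: one must argue carefully that no extra connected component of $\pi_{K/k}^{-1}(y)\setminus\{\sigma_K(y)\}$ touching $\sigma_K(y)$ can arise from points of $\pi_{L/k}^{-1}(y)$ other than $\sigma_L(y)$. This relies on the tree structure of the fibers described in \cite[Theorem~2.15]{Poi-Pul2}, and is precisely where the hypothesis that $K$ contains $\sH(\sigma_L(y))$ (rather than merely $\sH(y)$) is used to place $\sigma_L(y)$ and $\sigma_K(y)$ in compatible position.
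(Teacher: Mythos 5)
Your overall strategy is the paper's: it also picks $K$ to be an algebraically closed complete valued extension of $\sH(\sigma_L(y))$ and invokes the transitivity $\sigma_{K/k}=\sigma_{K/L}\circ\sigma_{L/k}$ (your Steps 1--2 are its entire written proof). The problem is Step 3, which is where the actual content lies, and both your argument and your intermediate claim there are incorrect. You assert that $\sigma_L(y)$ admits an open neighborhood $U$ in $Y_L$ with $U\cap\pi_{L/k}^{-1}(y)=\{\sigma_L(y)\}$ \emph{because} the components of $\pi_{L/k}^{-1}(y)\setminus\{\sigma_L(y)\}$ are open discs with boundary $\{\sigma_L(y)\}$. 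This is backwards: precisely because each of these (infinitely many) discs has $\sigma_L(y)$ in its closure, every open neighborhood of $\sigma_L(y)$ meets every one of them, so no such $U$ exists. Moreover, the conclusion of Step 3 is false as stated: the components of $\pi_{K/L}^{-1}(\sigma_L(y))\setminus\{\sigma_K(y)\}$ form in general a \emph{proper} subcollection of those of $\pi_{K/k}^{-1}(y)\setminus\{\sigma_K(y)\}$. For instance, if $y$ is the Gauss point of $\A^1_k$, the latter collection consists of the discs $D(a,1^-)$ with $\tilde a\in\tilde K\setminus\tilde k$, while the former consists only of those with $\tilde a\in\tilde K\setminus\tilde L$, and $\tilde L\supsetneq\tilde k$ here.

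The argument is repairable, because the independence clause of Lemma~\ref{lem:fiber} means you only need \emph{one} component $D$ usable for both computations, not an identification of the full collections. After reducing to $\varphi^{-1}(x)=\{y\}$, every component of $\pi_{K/L}^{-1}(\sigma_L(x))\setminus\{\sigma_K(x)\}$ is a connected component of $X_K\setminus\{\sigma_K(x)\}$ contained in $\pi_{K/k}^{-1}(x)$, hence is also a component of $\pi_{K/k}^{-1}(x)\setminus\{\sigma_K(x)\}$; fix such a $D$. Then $\varphi_K^{-1}(D)\subseteq\pi_{K/L}^{-1}\big(\varphi_L^{-1}(\sigma_L(x))\big)$, and Lemma~\ref{lem:fiber} applied over $L$ gives $\varphi_L^{-1}(\sigma_L(x))=\{\sigma_L(y)\}$ (the components of $\pi_{L/k}^{-1}(y)\setminus\{\sigma_L(y)\}$ map into components of $\pi_{L/k}^{-1}(x)\setminus\{\sigma_L(x)\}$, so they avoid $\sigma_L(x)$). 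Hence $\varphi_K^{-1}(D)\cap\pi_{K/k}^{-1}(y)=\varphi_K^{-1}(D)\cap\pi_{K/L}^{-1}(\sigma_L(y))$, and your Step 4 then goes through verbatim.
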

\begin{proof}
Note that $\sH(\sigma_{L}(y))$ contains~$\sH(y)$. Let~$K$ be an algebraically closed complete valued extension of~$\sH(\sigma_{L}(y))$. The result follows easily from the fact that $\sigma_{K/k} = \sigma_{K/L} \circ \sigma_{L/k}$.
\end{proof}

We now explain how to interpret~$\ks^y_{\varphi}$ as a separable degree and~$\ki^y_{\varphi}$ as an inseparable degree. Recall that, for each $y\in X^{(2)}$, the residue field~$\wtilde{\sH(y)}$ is of transcendence degree~1 over~$\tilde k$. In particular, it is the function field of a well-defined smooth connected projective curve over~$\tilde{k}$. We denote it by~$\sC_{y}$ and call it the \textit{residue curve} at~$y$.

\begin{lemma}\label{lem:CxK}
Let $y\in Y^{(2)}$. For every complete valued extension~$K$ of~$k$, we have a canonical isomorphism 
\[\sC_{y} \otimes_{\tilde k} \tilde K \iso \sC_{\sigma_{K}(y)}.\] 
\end{lemma}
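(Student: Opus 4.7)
The plan is to identify function fields. Since~$\tilde k$ is algebraically closed, the smooth projective curve $\sC_{y}$ is geometrically integral over~$\tilde k$, so $\sC_{y} \otimes_{\tilde k} \tilde K$ is again a smooth connected projective curve over~$\tilde K$, with function field canonically $\Frac\bigl(\wtilde{\sH(y)} \otimes_{\tilde k} \tilde K\bigr)$. Because a smooth connected projective curve over a field is determined up to unique isomorphism by its function field, it suffices to produce a canonical $\tilde K$-algebra isomorphism
\[
\Phi \colon \Frac\bigl(\wtilde{\sH(y)} \otimes_{\tilde k} \tilde K\bigr) \iso \wtilde{\sH(\sigma_{K}(y))}.
\]

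First, I would unwind the construction of $\sigma_{K}(y)$: peakedness of~$y$ says the tensor seminorm on $\sH(y)\hotimes_{k}K$ is multiplicative, and $\sH(\sigma_{K}(y))$ is the completion of this normed ring. The inclusions $\sH(y) \hookrightarrow \sH(\sigma_{K}(y))$ and $K \hookrightarrow \sH(\sigma_{K}(y))$ preserve unit balls, and reducing modulo the respective maximal ideals produces a natural $\tilde k$-algebra homomorphism
\[
\iota \colon \wtilde{\sH(y)} \otimes_{\tilde k} \tilde K \longrightarrow \wtilde{\sH(\sigma_{K}(y))}.
\]
The source is an integral domain since~$\tilde k$ is algebraically closed in the regular extension $\wtilde{\sH(y)}$, and multiplicativity of the tensor norm forces $\ker(\iota)=0$. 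Hence $\iota$ extends to an injective morphism $\bar\iota$ on fraction fields, which I would take as~$\Phi$.

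Second, I would prove that $\bar\iota$ is surjective. Since~$y$ has type~$2$ we have $|\sH(y)^{\times}|=|k^{\times}|$, and multiplicativity of the tensor norm gives $|\sH(\sigma_{K}(y))^{\times}| = |K^{\times}|$, so $\sigma_{K}(y)$ has type~$2$ in~$Y_{K}$ and $\wtilde{\sH(\sigma_{K}(y))}$ has transcendence degree one over~$\tilde K$. Both source and target of $\bar\iota$ are then function fields of one variable over the algebraically closed field~$\tilde K$, so $\bar\iota$ corresponds to a finite dominant morphism of smooth projective curves $\sC_{\sigma_{K}(y)} \to \sC_{y} \otimes_{\tilde k} \tilde K$; it remains to show it has degree one. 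For this I would use that $\sH(y) \otimes_{k} K$ is dense in $\sH(y) \hotimes_{k} K$, so any element of norm~$\le 1$ in the completion can be approximated by an element of $\sH(y)^{\circ} \otimes_{\kc} K^{\circ}$ up to error of norm $<1$; reducing modulo the maximal ideal shows that $\wtilde{\sH(\sigma_{K}(y))}$ is generated as a $\tilde K$-algebra by the image of~$\iota$, giving the required surjectivity. Canonicity is then manifest, as all the maps in sight are induced by the inclusions $\sH(y),K \hookrightarrow \sH(\sigma_K(y))$.

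The hardest step is the surjectivity of~$\bar\iota$: concretely, one must rule out that additional residue-field elements appear in the completion beyond those coming from $\wtilde{\sH(y)}\otimes_{\tilde k}\tilde K$. This is a \emph{no-defect} statement for the extension of scalars at a type~$2$ peaked point, and its verification rests on the careful approximation argument sketched above, which crucially uses that the value group does not grow beyond $|K^\times|$.
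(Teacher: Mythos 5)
Your overall strategy --- reducing to a canonical isomorphism of function fields $\Frac\bigl(\wtilde{\sH(y)}\otimes_{\tilde k}\tilde K\bigr)\iso\wtilde{\sH(\sigma_K(y))}$ and building it from the unit-ball-preserving inclusions --- is sound, and it is a genuinely different route from the paper's (which works with a strictly $k$-affinoid neighbourhood $W=\cM(\cB)$ of $y$ whose Shilov boundary is $\{y\}$, identifies $\wtilde{\sH(y)}$ with $\Frac(\tilde\cB)$ via Berkovich's reduction theory, and then quotes Bosch's theorem $\wtilde{\cB\ho_k K}\simeq\tilde\cB\otimes_{\tilde k}\tilde K$). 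But the step you yourself single out as the hardest, surjectivity of $\bar\iota$, is not actually proved, and the approximation you propose does not close it. Density of $\sH(y)\otimes_k K$ in the completion only reduces you to an element $x'$ of the algebraic tensor product with tensor seminorm $\le 1$; the claim that such an $x'$ lies in $\sH(y)^{\circ}\otimes_{\kc}K^{\circ}$ up to an error of norm $<1$ is exactly the nontrivial content of the lemma (absence of defect in the reduction of the completed tensor product), not a consequence of density. The tensor seminorm is an infimum over representations, so from $\|x'\|\le 1$ you only get representations $\sum_i a_i\otimes b_i$ with $\max_i|a_i|\,|b_i|\le 1+\eps$; even after replacing the $b_i$ by an $\alpha$-orthogonal family with $\alpha<1$, individual terms may still satisfy $|a_i|\,|b_i|>1$ and cannot be rescaled into $\sH(y)^{\circ}\otimes_{\kc}K^{\circ}$. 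Ruling this out requires genuinely orthonormal decompositions or an equivalent input; ``the value group does not grow'' is a consequence of the statement, not a mechanism for proving it.

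This is precisely where the paper imports serious external input: stability of the algebraically closed field $k$, distinguishedness of the reduced strictly $k$-affinoid algebra $\cB$ (BGR, Theorem~6.4.3/1), and Bosch's Satz~4 on reductions of completed tensor products. If you want to keep your purely field-theoretic formulation, you must cite an analogue of Bosch's result for $\sH(y)\ho_k K$, or else reduce to the affinoid setting as the paper does. Two smaller points. First, $\sH(\sigma_K(y))$ is the completed \emph{fraction field} of the domain $\sH(y)\ho_k K$, not the completion of that ring itself, so you also need that the residue field of this fraction field equals $\Frac$ of the residue ring (true here because $|K^{\times}|$ supplies elements of every norm, but it should be said). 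Second, injectivity of $\iota$ does not follow from multiplicativity alone: it needs the orthogonality statement that $\bigl\|\sum_i a_i\otimes b_i\bigr\|=\max_i|a_i|$ whenever $|b_i|=1$ and the $\tilde b_i$ are $\tilde k$-linearly independent, which is the easy half of the same Bosch-type result.
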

\begin{proof}
We may assume that~$Y$ is reduced since reducing it leaves~$\sH(y)$ unchanged. Let $W = \cM(\cB)$ be a strictly $k$-affinoid domain of~$Y$ whose Shilov boundary is exactly~$\{y\}$. By \cite[Proposition~2.4.4 (iii)]{Ber90}, $\tilde{W} = \Spec(\tilde{\cB})$ is irreducible and its generic point is the reduction~$\tilde{y}$ of~$y$. Since~$k$ is algebraically closed, by~\cite[Proposition~6.2.1/4 (ii)]{BGR}, we have $|\cB|_{\textrm{sup}} = |k|$, hence, by \cite[Proposition~2.4.4 (ii)]{Ber90}, an isomorphism
\[\Frac(\tilde{\cB}) \iso \tilde{k}(\tilde{y}) \iso \wtilde{\sH(y)}.\]

Since~$k$ is algebraically closed, it is stable (see~\cite[Section~3.6]{BGR} for definitions). Since~$W$ is reduced, by~\cite[Theorem~6.4.3/1]{BGR}, $W$ is distinguished. Since~$\tilde{k}$ is also algebraically closed, $\tilde{W}$ is geometrically integral. In particular, $\tilde{\cB} \otimes_{\tilde{k}}\tilde{K}$ is a domain. By \cite[Proposition~5.2.5 and its proof]{Ber90}, the point~$\sigma_{K}(y)$ is the unique point in the Shilov boundary of $W_{K} = \cM(\cB\ho_{k}K)$, hence, by \cite[Proposition~2.4.4]{Ber90} again, we have an isomorphism
\[\Frac(\wtilde{\cB\ho_{k}K})  \iso \wtilde{\sH(\sigma_{K}(y))}.\]
By \cite[\S 6, Satz~4]{Orthonormalbasen}, we have an isomorphism $\wtilde{\cB\ho_{k} K} \simeq \tilde{\cB}\otimes_{\tilde{k}}\tilde{K}$ and the result follows.
\end{proof}

Note that, for each $y\in Y^{(2)}$, $\varphi$ naturally induces a morphism $\sH(\varphi(y)) \to \sH(y)$, hence a morphism $\wtilde{\sH(\varphi(y))} \to \wtilde{\sH(y)}$ and finally a morphism $\tilde \varphi_{y}: \sC_{y} \to \sC_{\varphi(y)}$ between the residue curves.

Recall that, since~$k$ is algebraically closed, for each $y\in X^{(2)}$, $\sH(y)$ is stable (see~\cite[Corollary~6.3.6]{stablemodification} or \cite[Th\'eor\`eme~3.3.16]{Duc-book}), hence we have
\[[\sH(y):\sH(\varphi(y))] = [\wtilde{\sH(y)}:\wtilde{\sH(\varphi(y))}] = \deg(\tilde{\varphi}_{y}).\]

\begin{lemma}\label{lem:phiyK}
Let~$y \in Y^{(2)}$ and set $x:=\varphi(y)$. Let~$K$ be a complete valued extension of~$k$. Then, the base-change of $\tilde{\varphi}_{y} : \sC_{y} \to \sC_{x}$ to~$\tilde{K}$  coincides with $\wtilde{(\varphi_{K})}_{\sigma_{K}(y)} : \sC_{\sigma_{K}(y)} \to \sC_{\sigma_{K}(x)}$.

Let~$D$ be a connected component of~$\pi_{K}^{-1}(x)\setminus\{\sigma_{K}(x)\}$. Then, the number of connected components of~$\varphi_{K}^{-1}(D) \cap \pi_{K}^{-1}(y)$ is equal to the separable degree of~$\tilde{\varphi}_{y}$, \ie to the separable degree of~$\wtilde{\sH(y)}/\wtilde{\sH(x)}$.

For each connected component~$D'$ of ~$\varphi_{K}^{-1}(D) \cap \pi_{K}^{-1}(y)$, the degree of~$(\varphi_{K})_{|D'}$ is equal to the inseparable degree of $\tilde{\varphi}_{y}$, \ie to the inseparable degree of ~$\wtilde{\sH(y)}/\wtilde{\sH(x)}$.
\end{lemma}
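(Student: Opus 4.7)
My plan is to first reduce to the case $\varphi^{-1}(x) = \{y\}$ by shrinking $Y$ and $X$ around these points; under that assumption $\varphi_K^{-1}(D) \cap \pi_K^{-1}(y) = \varphi_K^{-1}(D)$ and the third statement will follow from the second by a degree count.

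For the first statement I would argue by functoriality of the reduction. Choose strictly affinoid neighborhoods $V = \cM(\cA)$ of $x$ with $\{x\}$ as Shilov boundary and $W = \cM(\cB) \subset \varphi^{-1}(V)$ of $y$ with $\{y\}$ as Shilov boundary. The morphism $\varphi_{|W}$ corresponds to a bounded morphism $\cA \to \cB$ whose reduction $\tilde\cA \to \tilde\cB$ induces $\tilde\varphi_y$ on the associated smooth projective curves. Base-changing to~$K$, the morphism $(\varphi_K)_{|W_K}$ corresponds to $\cA \hotimes K \to \cB \hotimes K$, whose reduction is $\tilde\cA \otimes_{\tilde k} \tilde K \to \tilde\cB \otimes_{\tilde k} \tilde K$ (reduction commutes with completed tensor product, as invoked in the proof of Lemma~\ref{lem:CxK}). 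Combined with the identifications $\sC_{\sigma_K(y)} \simeq \sC_y \otimes_{\tilde k} \tilde K$ and $\sC_{\sigma_K(x)} \simeq \sC_x \otimes_{\tilde k} \tilde K$ from Lemma~\ref{lem:CxK}, this is exactly the first claim.

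For the second and third statements, the main input is the standard bijection, at any type~$2$ point~$z$ of a quasi-smooth $k$-analytic curve, between the branches at~$z$ and the closed points of the residue curve $\sC_z$. Applied at $\sigma_K(x)$ and $\sigma_K(y)$, and combined with Lemma~\ref{lem:CxK}, this identifies the connected components of $\pi_K^{-1}(x) \setminus \{\sigma_K(x)\}$ (resp.\ $\pi_K^{-1}(y) \setminus \{\sigma_K(y)\}$) with the $\tilde K$-points of $\sC_x \otimes_{\tilde k} \tilde K$ (resp.\ $\sC_y \otimes_{\tilde k} \tilde K$) lying above the generic point $\eta_x$ of $\sC_x$ (resp.\ $\eta_y$ of $\sC_y$). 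Given a component $D$ corresponding to a $\tilde K$-point $P$ above $\eta_x$, the components of $\varphi_K^{-1}(D)$ correspond bijectively to the closed points of the scheme-theoretic fiber $(\tilde\varphi_y)_{\tilde K}^{-1}(P)$; any such closed point automatically lies above $\eta_y$ because $\tilde\varphi_y^{-1}(\eta_x) = \{\eta_y\}$. By flat base change this fiber equals $\Spec(\wtilde{\sH(y)} \otimes_{\wtilde{\sH(x)}} \tilde K)$, where $\wtilde{\sH(x)} \to \tilde K$ is the embedding induced by~$P$. Decomposing $\wtilde{\sH(y)}/\wtilde{\sH(x)}$ through its maximal separable subextension of degree $m$ (the separable degree of $\tilde\varphi_y$), followed by a purely inseparable tower of degree $p^r$ (the inseparable degree), and using that $\tilde K$ is perfect, this tensor product becomes a product of $m$ local Artinian rings each of length $p^r$. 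Hence the fiber has exactly $m$ closed points, proving the second statement. For the third, Lemma~\ref{lem:fiber} gives that each $(\varphi_K)_{|D'_j}$ has a common degree~$d$, and since $md = \deg(\varphi_K) = [\sH(y):\sH(x)] = \deg(\tilde\varphi_y) = m p^r$ (using stability of $\sH(y)$), we conclude $d = p^r$.

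The most delicate ingredient is the justification that the connected components of $\pi_K^{-1}(z) \setminus \{\sigma_K(z)\}$ correspond precisely to the closed points of $\sC_z \otimes_{\tilde k} \tilde K$ lying above $\eta_z$; this follows from combining the standard branch/closed-point bijection at $\sigma_K(z)$ with compatibility of the natural morphism $\sC_{\sigma_K(z)} \to \sC_z$ with $\pi_K$, which reduces once more to affinoid bookkeeping of the sort used in the proof of Lemma~\ref{lem:CxK}.
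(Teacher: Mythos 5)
Your argument follows essentially the same route as the paper's: part one by functoriality of reduction on affinoid neighbourhoods together with $\wtilde{\cB\ho_k K}\simeq\tilde\cB\otimes_{\tilde k}\tilde K$, and parts two and three by identifying the connected components of the fibres of $\pi_K$ with the closed points of the reduced curves lying over $\tilde x$ (resp.\ $\tilde y$) and then counting points in the generic fibre of $\tilde\varphi_y$ after base change, finishing part three by the same degree count via Lemma~\ref{lem:fiber} and stability. The only differences are cosmetic: you set up the component/closed-point dictionary via branches at $\sigma_K(x)$ rather than via the reduction map $V_K\to\tilde V_K$ (the ``delicate ingredient'' you defer at the end is exactly the content of the middle paragraph of the paper's proof), and where the paper simply asserts that a closed point over the generic point has as many preimages as the separable degree, you compute the fibre ring $\wtilde{\sH(y)}\otimes_{\wtilde{\sH(x)}}\tilde K$ explicitly.

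That explicit computation contains the one step that does not work as written. For $L\otimes_{\wtilde{\sH(x)}}\tilde K$ (with $L$ the maximal separable subextension, of degree $m$) to split into $m$ factors, you need $\tilde K$ to be \emph{separably closed}, not perfect: a perfect field admits nontrivial finite separable extensions, and then this \'etale algebra has fewer than $m$ factors. (Perfectness is in fact irrelevant even to the local-Artinian structure of each factor, since a purely inseparable extension tensored with any field extension is local of the expected length.) As the statement only assumes $K$ to be a complete valued extension of $k$, this matters: the component count is actually false for general $K$ --- for a degree-two Kummer cover at a Gauss point with residue characteristic $\ne 2$ and $K=\what{k(T)}$, the preimage of the disc $D(T,1^-)$ is connected while the residual separable degree is $2$. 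This defect is, however, equally present in the paper's own proof (which implicitly takes the closed points of $\tilde V_K$ over $\tilde x$ to be smooth \emph{$\tilde K$-rational} points), and in every application of the lemma $K$ is algebraically closed, in which case your argument goes through verbatim with ``perfect'' replaced by ``algebraically closed''.
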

\begin{proof}
We may assume that~$Y$ and~$X$ are reduced. As in the proof of Lemma~\ref{lem:CxK}, we consider a strictly $k$-affinoid domain~$W$ of~$Y$ whose Shilov boundary is exactly~$\{y\}$ and a strictly $k$-affinoid domain~$V$ of~$X$ whose Shilov boundary is exactly~$\{x\}$. We may assume that $\varphi^{-1}(x)=\{y\}$ and $\varphi^{-1}(V) \subseteq W$, so that~$\varphi$ induces a morphism of affinoid spaces $\phi: W \to V$, hence, by reduction, a morphism of affine curves $\tilde\phi:\tilde{W} \to \tilde{V}$. The later morphism extends uniquely to a morphism of projective curves $\sC_{y} \to \sC_{x}$, which is nothing but~$\tilde{\varphi}_{x}$, by the proof of Lemma~\ref{lem:CxK}.

By base-changing to~$K$, similarly, we find a morphism of affine curves $\phi_{K}: \tilde{W_{K}} \to \tilde{V_{K}}$ that extends uniquely to $\tilde\phi_{K}: \wtilde{(\varphi_{K})}_{\sigma_{K}(y)}$. By the proof of Lemma~\ref{lem:CxK} again, we have $\tilde{W_{K}} = \tilde W\otimes_{\tilde k} \tilde K$ and $\tilde{V_{K}} = \tilde V \otimes_{\tilde k} \tilde K$ and the first part of the result follows.

\medbreak

We now consider the reduction map $V \to \tilde{V}$. Recall that the reduction~$\tilde{x}$ of~$x$ is the generic point of the curve~$\tilde{V}$. Let~$v$ be a closed point of~$\tilde V_{K}$ over~$\tilde x$. Since~$\tilde k$ is algebraically closed, $v$ is smooth, hence its preimage~$T_{v}$ by the reduction map $V_{K} \to \tilde V_{K}$ is an open disc. It follows that~$T_{v}$ is a connected component of $\pi^{-1}_{K}(x) \setminus \{\sigma_{K}(x)\}$.

The preimage of the generic point of~$\tilde{V}_{K}$ by the reduction map being~$\sigma_{K}(x)$ (see the proof of Lemma~\ref{lem:CxK}), we have just described all the connected components of $\pi^{-1}_{K}(x) \setminus \{\sigma_{K}(x)\}$. In particular, there exists a closed point~$v$ of~$\tilde V_{K}$ over~$\tilde x$ such that~$T_{v}=D$.

By a similar argument, for each point~$w$ in~$\tilde W_{K}$ over~$v$, the preimage~$T_{w}$ of~$w$ by the reduction map $W_{K} \to \tilde W_{K}$ is a connected component of $\pi^{-1}_{K}(y) \setminus \{\sigma_{K}(y)\}$. It follows that the connected components of $\varphi_{K}^{-1}(T_{v}) \cap \pi^{-1}_{K}(y)$ are in bijection with the preimages of~$v$ in~$\tilde W_{K}$. Since~$v$ lies over the generic point of~$\tilde W$, the number of its preimages by~$\tilde\phi_{K}$ is equal to the separable degree of~$\tilde\phi$, which coincides with the separable degree of~$\tilde\varphi_{y}$.

\medbreak

Finally, let~$D'$ be a connected component of~$\varphi_{K}^{-1}(D) \cap \pi_{K}^{-1}(y)$. From Lemma~\ref{lem:fiber}, it follows that $\deg((\vphi_K)_{|D'})$ does not depend on $D'$. Since we have just proven that the number of connected component of~$\varphi_{K}^{-1}(D) \cap \pi_{K}^{-1}(y)$ is equal to the separable degree of~$\tilde\varphi_{y}$, it follows that the degree of $(\vphi_K)_{|D'}$ is equal to the degree of~$\varphi_{K}$ divided by the separable degree of~$\tilde\varphi_{y}$. The result now follows from the chain of equalities
\[\deg(\varphi_{K}) = \deg(\varphi) = [\sH(y):\sH(x)] = [\wtilde{\sH(y)}:\wtilde{\sH(x)}] = \deg(\tilde\varphi_{y}).\]
\end{proof}

We now define the residual degree, and variants of it, of the morphism~$\varphi$ at a point $y\in Y^\hyp$. Let us begin with type~2 points.

\begin{definition}
Let~$y \in Y^{(2)}$. We define the residual degree (resp. residual separable degree, resp. residual inseparable degree) of~$\varphi$ at~$y$ to be the degree (resp. separable degree, resp. inseparable degree) of the morphism~$\tilde\varphi_{y}$, or equivalently of the extension $\wtilde{\sH(y)} / \wtilde{\sH(x)}$. 
\end{definition}

In order to extend the definition to arbitrary points of~$Y^\hyp$, note that, for each complete valued extension~$K$ of~$\sH(y)$, the point~$\sigma_{K}(y)$ is of type~2.

\begin{definition}\label{def:resdeghyp}
Let~$y \in Y^\hyp$. Let~$K$ be an algebraically closed complete valued extension of~$\sH(y)$. We define the residual degree (resp. residual separable degree, resp. residual inseparable degree) of~$\varphi$ at~$y$ to that of~$\varphi_{K}$ at~$\sigma_{K}(y)$. 
\end{definition}

The definition does not depend on the choice of the extension~$K$. Indeed, one may always embed two extensions~$K$ and~$K'$ into a common one~$L$ and the result then follows from Lemma~\ref{lem:phiyK} (applied first with $y = \sigma_{L}(y) = \sigma_{L/K}(\sigma_{K}(y))$, $K = L$, $k=K$ and then with $y = \sigma_{L}(y) = \sigma_{L/K'}(\sigma_{K'}(y))$, $K = L$, $k=K'$.)

Moreover, for $y\in Y^{(2)}$, the definition coincides with the former by Lemma~\ref{lem:phiyK}.

\begin{remark}\label{rem:resdeghyp}
It follows from the stability of points of type~2 (see the reminder before Lemma~\ref{lem:phiyK}) that, for each $y\in Y^\hyp$, the residual degree of~$\varphi$ at~$y$ is equal to $[\sH(y) : \sH(\varphi(y))]$, \ie the degree of~$\varphi$ at~$y$.
\end{remark}

Note that, with Definition~\ref{def:resdeghyp}, the result of Lemma~\ref{lem:phiyK} extends readily to points of type~3 and~4, which shows that the quantity~$\ks^y_{\varphi}$ from Notation~\ref{not:fyNy} is nothing but the residual separable degree of~$\varphi$ at~$y$ while $\ki^y_{\varphi}$ (in the notation of Lemma \ref{lem:fiber}) is the residual inseparable degree of $\vphi$ at $y$.

\begin{lemma}\label{lem:phiyK34}
Let $y\in Y^\hyp$ and set $x:=\varphi(y)$. Let~$K$ be an algebraically closed complete valued extension of~$k$. Then, for each connected component~$D$ of~$\pi_{K}^{-1}(x)\setminus\{\sigma_{K}(x)\}$, the number of connected components of~$\varphi_{K}^{-1}(D) \cap \pi_{K}^{-1}(y)$ is equal to the residual separable degree of~$\varphi$ at~$y$ and, for each connected component~$D'$ of~$\varphi_{K}^{-1}(D) \cap \pi_{K}^{-1}(y)$, the degree of~$(\varphi_{K})_{|D'}$
is equal to the residual inseparable degree of $\vphi$ at $y$. 
\qed
\end{lemma}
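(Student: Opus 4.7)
The type~2 case is precisely Lemma~\ref{lem:phiyK}, once one observes---as remarked just before the statement of the present lemma---that $\ks^y_{\vphi}$ and $\ki^y_{\vphi}$ from Notation~\ref{not:fyNy} agree with the residual separable and inseparable degrees of~$\vphi$ at~$y$ in this case. So I would assume from now on that $y$ has type~3 or~4, and reduce the remaining cases to the type~2 case via a further scalar extension.

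The main step is as follows. Choose an algebraically closed complete valued extension~$L$ of~$K$ that contains~$\Hs(y)$. Then, using the transitivity $\sigma_{L/k}=\sigma_{L/K}\circ\sigma_{K/k}$, the point $\sigma_L(y)$ has type~2 in~$Y_L$. Apply Lemma~\ref{lem:phiyK} to $\vphi_L\colon Y_L\to X_L$ at $\sigma_L(y)$ (with a still further algebraically closed complete extension if one wishes to match the statement of that lemma literally): this expresses the corresponding component count and degree in terms of the residual separable and inseparable degrees of~$\vphi_L$ at~$\sigma_L(y)$. By Definition~\ref{def:resdeghyp}, the latter are nothing but the residual separable and inseparable degrees of~$\vphi$ at~$y$.

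It then remains to transfer these counts and degrees from $Y_L$ back down to $Y_K$. Lemma~\ref{lem:fiber}, now viewed over the base field~$K$ and applied to~$\vphi_K$ at~$\sigma_K(y)\in Y_K^\hyp$, identifies the number of components of $\vphi_K^{-1}(D)\cap\pi_K^{-1}(y)$ with the intrinsic invariant $\ks^{\sigma_K(y)}_{\vphi_K}$, and the degree of $\vphi_K$ on each such component with $\ki^{\sigma_K(y)}_{\vphi_K}$. Invoking Lemma~\ref{lem:profileK} (or rather the same statement applied in the tower $K\subset L$) then identifies these with $\ks^y_{\vphi}$ and $\ki^y_{\vphi}$, closing the loop. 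The main technical point---and what I expect to be the most delicate part---is the careful bookkeeping of base fields at each stage; everything ultimately works because $\ks$ and $\ki$ are invariants stable under any algebraically closed complete valued extension, as already established in Lemmas~\ref{lem:fiber} and~\ref{lem:profileK}.
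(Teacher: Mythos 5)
Your proposal is correct and follows essentially the route the paper intends: the lemma is stated there with a \qed{} as an immediate consequence of the remark preceding it, which assembles exactly the chain you describe (Lemma~\ref{lem:phiyK} for type~2 points, Definition~\ref{def:resdeghyp}, Lemma~\ref{lem:profileK}, and Lemma~\ref{lem:fiber}). The one step to tighten is the last one: the counts in the statement concern $\pi_{K/k}^{-1}(y)$, so the relevant instance of Lemma~\ref{lem:fiber} is the one over the base field~$k$ at~$y$ with extension~$K$ --- which requires $\sH(y)$ to embed into~$K$, and when it does not, $\pi_{K}^{-1}(x)\setminus\{\sigma_{K}(x)\}$ is empty and the statement is vacuous --- rather than the instance over base~$K$ at~$\sigma_{K}(y)$, which would count components of fibers of a further projection $\pi_{M/K}$ instead.
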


\begin{proposition}\label{prop:comparison}
For each $y\in Y^\hyp$, we have $f_{\varphi}^y = \,^T\!f_{\varphi}^y$. In particular, the map $f_{\varphi} : y \mapsto f_{\varphi}^y$ is piecewise $|k^\times|$-monomial and continuous on each interval inside~$Y^\hyp$.
\end{proposition}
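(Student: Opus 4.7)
The plan is to establish $f_\varphi^y = \,^T\!f_\varphi^y$ first for type 2 points and then extend to all $y \in Y^\hyp$ by base change; the ``in particular'' clause will then follow directly from \cite[Theorem 3.4.8]{TemkinHerbrand} applied to $^T\!f_\varphi$.

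For $y \in Y^{(2)}$, fix an algebraically closed complete valued extension $K$ of $\sH(y)$, so that $\sigma_K(y) \in Y_K$ is again of type 2. Apply Theorem~\ref{thm:radializingskeleton} to $\varphi_K$ at $\sigma_K(y)$: this yields a finite exceptional set $F_{\sigma_K(y)} \subset \cD_{\sigma_K(y)}$ outside of which every branch $D'$ is a strict open disc mapping to a strict open disc via a radial morphism of common profile $^T\!f_{\varphi_K}^{\sigma_K(y)}$. The idea is to exhibit two distinct classes of branches at $\sigma_K(y)$ and to compute this common profile in two different ways, which must then agree.

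The first class consists of the base changes $D_K$ for $D \in \cD_y$. Each such $D_K$ is a strict open disc over $K$ whose unique boundary point in $Y_K$ is $\sigma_K(y)$ (the latter is precisely the Gauss point of $D_K$, corresponding to the multiplicative norm on $\sH(y)\wt_k K$), so $D_K \in \cD_{\sigma_K(y)}$. For $D \in \cD_y \setminus F_y$, Theorem~\ref{thm:radializingskeleton} applied to $\varphi$ at $y$ says $\varphi|_D$ is radial with profile $^T\!f_\varphi^y$; since the profile of a finite morphism of strict open discs is invariant under base change (it can be read off from any coordinate representation via Lemma~\ref{lem:image of profile}), $\varphi_K|_{D_K}$ is radial with the same profile $^T\!f_\varphi^y$. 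The second class consists of the connected components of $\pi_K^{-1}(y) \setminus \{\sigma_K(y)\}$; by Lemma~\ref{lem:fiber} these are strict open discs on which $\varphi_K$ restricts to radial morphisms of common profile $f_\varphi^y$, and each has $\sigma_K(y)$ as its unique boundary point, so also belongs to $\cD_{\sigma_K(y)}$. Provided that both classes contain non-exceptional branches, the radializing property at $\sigma_K(y)$ forces
\[
{}^T\!f_\varphi^y \;=\; {}^T\!f_{\varphi_K}^{\sigma_K(y)} \;=\; f_\varphi^y.
\]
Non-exceptional branches in the first class exist automatically because $\cD_y$ is infinite (being in bijection with the closed points of $\sC_y$). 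In the second class, Lemma~\ref{lem:CxK} identifies branches with closed points of $\sC_y \otimes_{\tilde k} \tilde K$ lying over the generic point of $\sC_y$; taking $K$ large enough so that $\tilde k(\sC_y) \otimes_{\tilde k} \tilde K$ has more maximal ideals than $|F_{\sigma_K(y)}|$ produces one.

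For $y \in Y^\hyp$ of type 3 or 4, I would pick an algebraically closed complete extension $L$ of $\sH(y)$ for which $\sigma_L(y) \in Y_L$ is of type 2. Lemma~\ref{lem:profileK} yields $f_\varphi^y = f_{\varphi_L}^{\sigma_L(y)}$, while the analogous equality $^T\!f_\varphi^y = \,^T\!f_{\varphi_L}^{\sigma_L(y)}$ is forced by the uniqueness of the continuous piecewise $|k^\times|$-monomial extension from $Y^{(2)}$ to $Y^\hyp$ of \cite[Theorem 3.4.8]{TemkinHerbrand}; the type 2 case applied to $\sigma_L(y)$ in $Y_L$ then concludes. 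The main technical obstacle is the bookkeeping in the type 2 case: verifying that $D_K$ really is a branch at $\sigma_K(y)$ in $Y_K$ (\textit{i.e.} that its closure in $Y_K$ meets $\pi_K^{-1}(y)$ only at $\sigma_K(y)$), and arranging via a suitable choice of $K$ that at least one ``generic'' branch avoids $F_{\sigma_K(y)}$; everything else is essentially formal.
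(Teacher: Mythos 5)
Your proposal is correct and follows essentially the same route as the paper: for type 2 points, compare the two classes of branches at $\sigma_K(y)$ (base changes of branches at $y$ versus components of the fiber $\pi_K^{-1}(y)\setminus\{\sigma_K(y)\}$, handled via Lemma~\ref{lem:fiber}) against the common Temkin profile from Theorem~\ref{thm:radializingskeleton}, then reduce type 3/4 points to the type 2 case via $\sigma_L$ and the uniqueness of the continuous piecewise monomial extension, which is equivalent to the paper's sequential-limit argument. One small remark: the ``take $K$ large enough'' step is unnecessary (and slightly circular, since $F_{\sigma_K(y)}$ depends on $K$) --- for any algebraically closed complete $K\supseteq\sH(y)$ the fiber already has infinitely many components while $F_{\sigma_K(y)}$ is finite.
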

\begin{proof}
Let~$y \in Y^{(2)}$. Let~$K$ be an algebraically closed complete valued extension of~$\sH(y)$. We use the notation of Theorem~\ref{thm:radializingskeleton} for~$y$ and~$\sigma_{K}(y)$.

The preimage of each element of~$\cD_{y} \setminus F_{y}$ being a disjoint union of elements of $\cD_{\sigma_{K}(y)} \setminus F_{\sigma_{K}(y)}$, there exists $D \in \cD_{y} \setminus F_{y}$ such that one of the connected components of~$\pi^{-1}_{K}(D)$ belongs to~$\cD_{\sigma_{K}(y)} \setminus F_{\sigma_{K}(y)}$. It follows that $^T\!f^{\sigma_{K}(y)} =\, ^T\!f^y$. 

Moreover, by Lemma~\ref{lem:fiber}, all the connected components of $\pi^{-1}_{K}(y) \setminus \{\sigma_{K}(y)\}$ are connected components of $Y_{K}\setminus\{\sigma_{K}(y)\}$. Since there are infinitely many of them, one of them belongs to $\cD_{\sigma_{K}(y)} \setminus F_{\sigma_{K}(y)}$ and it follows that $^T\!f^{\sigma_{K}(y)}=f^y$.

\medbreak

Let $y \in Y^\hyp$. Let~$I$ be an interval inside~$Y^\hyp$ containing~$y$ and let $(z_{n})_{n\ge 0}$ be a sequence of points of type~2 of~$I$ converging to~$y$. The sequence of Temkin's profile functions $(^T\!f^{z_{n}})_{n\ge 0}$ converges to Temkin's profile function~$^T\!f^{y}$. 

Let~$K$ be an algebraically closed complete valued extension of~$\sH(y)$. The map~$\sigma_{K}$ being continuous and sending intervals to intervals, as before, $^T\!f^{\sigma_{K}(z_{n})}$ tends to~$^T\!f^{\sigma_{K}(y)}$ when $n$ tends to infinity. Each~$z_{n}$ being of type~2, by the result above, we have $^T\!f^{\sigma_{K}(z_{n})} = {}^T\!f^{z_{n}} = f^{z_{n}}$. Since~$\sigma_{K}(y)$ is also of type~2, we have $^T\!f^{\sigma_{K}(y)} = f^{\sigma_{K}(y)} = f^{y}$ by Lemma~\ref{lem:profileK}. By uniqueness of the limit, we deduce that $f^{y} = {}^T\!f^{y}$.

The remaining part of the result now follows from the similar statement for Temkin's profile function. 
\end{proof}

\begin{remark}
The statement still holds, with the same proof, if~$k$ has positive characteristic. In particular, our method provides an alternative definition of the profile function at every point (see Notation~\ref{not:fyNy}) that is direct and makes no use of continuity.
\end{remark}

We now give examples of profile functions. As before, we work with a finite morphism $\varphi : Y \to X$ between quasi-smooth $k$-analytic curves.

\begin{example}\label{ex:tamelyramified}
Let $y\in Y^\hyp$. Assume that~$\varphi$ is residually separable at~$y$ (\ie the residual separable degree of~$\varphi$ at~$y$ is equal to the residual degree of~$\varphi$ at~$y$). Then, we have $f^y_{\varphi} = \mathrm{id}$.

Indeed, set $x := \varphi(y)$ and let $K$ be an algebraically closed complete valued extension of $\sH(y)$. By Lemma~\ref{lem:phiyK34} and Remark~\ref{rem:resdeghyp}, for each connected component~$D$ of~$\pi_{K}^{-1}(x)\setminus\{\sigma_{K}(x)\}$, the number of connected components of~$\varphi_{K}^{-1}(D) \cap \pi_{K}^{-1}(y)$ is equal to the degree of~$\varphi$ at~$y$, which is also the degree of~$\varphi_{K}$ at~$\sigma_{K}(y)$. It follows that the morphism induced by~$\varphi_{K}$ on each of these connected components has degree one, hence is an isomorphism.
\end{example}

\begin{example}\label{ex:Frobenius}
Suppose now that $\text{char}(\tilde{k})=p>0$ and let $a\in k^*$. Let $\vphi $ and $ \phi:\A^1_k\to \A^1_k$ be the Frobenius morphism and the off-centered Frobenius morphism at $a$, respectively. Their action on rational points is given by $\vphi:x \in \A^1_{k}(k) \mapsto x^p \in \A^1_{k}(k)$ and $\phi:x\in \A^1_k(k)\mapsto (x+a)^p-a^p\in \A^1_k(k)$. Then, a direct computation shows that, for each $\rho>0$ and each $s \in [0,1]$, we have 
\[
f_{\varphi}^{\eta_{\rho}}(s) = 
\begin{cases}
|p|s \quad \textrm{ if } s\in [0,|p|^{\frac{1}{p-1}})\\
s^p \quad \quad \textrm{if } s\in [|p|^{\frac{1}{p-1}},1].\\
\end{cases}
\] 
On the other side, for $\rho\in (0,|a|\, |p|^{\frac{1}{p-1}}]$ we have
\[
f_{\phi}^{\eta_{\rho}}(s) = 
1, \quad \textrm{ for every } s\in [0,1],
\]
while for $\rho\in (|a|\, |p|^{\frac{1}{p-1}},|a|]$ we have
\[
 f_{\phi}^{\eta_{\rho}}(s) = 
\begin{cases}
|p|\, |a|^{p-1}\rho^{1-p}s \quad \textrm{ if } s\in [0,|p|^{\frac{1}{p-1}}|a|\rho^{-1})\\
s^p \quad \quad \quad \quad \quad \quad \, \, \textrm{if } s\in [|p|^{\frac{1}{p-1}}|a|\rho^{-1},1]\\
\end{cases}
\]
and for $\rho\geq |a|$ we have $f_{\phi}^{\eta_\rho}=f_{\vphi}^{\eta_\rho}$.

\end{example}

The last examples can be widely generalized. In order to do so, we will need to use the notion of different~$\delta_{L/K}$ of an extension of one-dimensional analytic fields~$L/K$ from~\cite{CTT14}. We refer to this article for the definition and basic properties and simply give a way to compute it in the case we are interested in. 

Let~$y \in Y^\hyp$. Recall that a tame parameter at~$y$ is an element $t_{y} \in \cO_{y}$ such that the extension of valued fields $\sH(y)/\widehat{k(t_{y})}$, where $\widehat{k(t_{y})}$ denotes the closure of~$k(t_{y})$ in~$\sH(y)$, is tamely ramified. The element~$t_{y}$ induces a morphism $\psi_{y}$ from some affinoid domain of~$Y$ containing~$y$ to~$\A^1_{k}$. We denote by $r(t_{y})$ the radius of the point~$\psi_{y}(y)$.

For each $y\in Y^\hyp$, if~$t_{y}$ is a tame parameter at~$y$ and~$t_{x}$ a tame parameter at~$x:=\varphi(y)$, by \cite[Corollary~2.4.6]{CTT14}, we have
\[\delta_{\varphi}(y) := \delta_{\sH(y)/\sH(x)} = \left| \frac{dt_{x}}{dt_{y}} \right| \frac{r(t_{y})}{r(t_{x})}.\]

\begin{lemma}\label{lem:differentK}
Let~$y\in Y^\hyp$. For each complete valued extension~$K$ of~$k$, we have
\[\delta_{\varphi_{K}}(\sigma_{K}(y)) = \delta_{\varphi}(y).\]
\end{lemma}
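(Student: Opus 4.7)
The strategy is to apply the formula $\delta_{\varphi}(y) = \bigl|\frac{dt_x}{dt_y}\bigr|\, \frac{r(t_y)}{r(t_x)}$ recalled just before the statement on both sides of the claimed equality, and check that each of the three factors on the right-hand side is unchanged when passing from~$y$ to~$\sigma_K(y)$. Fix tame parameters $t_y$ at~$y$ and $t_x$ at $x := \varphi(y)$, inducing morphisms $\psi_y \colon W \to \A^1_k$ and $\psi_x \colon V \to \A^1_k$ from suitable affinoid neighborhoods of~$y$ and~$x$.

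The first step is to verify that the images of $t_y$ and $t_x$ under the canonical embeddings $\cO_y \hookrightarrow \cO_{\sigma_K(y)}$ and $\cO_x \hookrightarrow \cO_{\sigma_K(x)}$ are still tame parameters, this time at $\sigma_K(y)$ and $\sigma_K(x)$. The compatibility of the section~$\sigma$ with morphisms (both sides being characterized by the same multiplicative tensor norm) yields $\psi_{y,K}(\sigma_K(y)) = \sigma_K(\psi_y(y))$; consequently, the embedding $\widehat{K(t_y)} \hookrightarrow \sH(\sigma_K(y))$ is obtained from $\widehat{k(t_y)} \hookrightarrow \sH(y)$ by applying $\hotimes_k K$. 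In residue characteristic $p>0$ (the only non-trivial case), tameness is equivalent to the ramification index being prime to~$p$ and the residual extension being separable; both conditions are preserved under this particular scalar extension, the second using the stability of points of type~2 together with Lemma~\ref{lem:CxK} applied to $\psi_y(y)$.

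Granting this, we check invariance of each factor. First, $\frac{dt_x}{dt_y}$ lies in $\sH(y)$ and the embedding $\sH(y) \hookrightarrow \sH(\sigma_K(y))$ is an isometry by multiplicativity of the tensor norm, so $\bigl|\frac{dt_x}{dt_y}\bigr|$ is unchanged. Secondly, the identity $\psi_{y,K}(\sigma_K(y)) = \sigma_K(\psi_y(y))$ reduces the invariance of $r(t_y)$ to the fact that $\sigma_K$ preserves the radius of every non-rigid point of~$\A^1_k$; this is straightforward from the explicit description of $\sigma_K$ on points of types~2, 3 and~4 (Gauss norms on discs, or nested sequences of such, whose centers may be chosen in~$k$, so no $a\in K\setminus k$ yields a strictly better approximation than an optimal $a\in k$). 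The analogous statements hold for $t_x$. Substituting these three invariances into the displayed formula for $\delta_{\varphi_K}(\sigma_K(y))$ and $\delta_\varphi(y)$ gives the claimed equality.

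The main subtlety is the stability of tameness under scalar extension to~$K$ carried out in Step~1; if a more direct argument is desired, one may instead bypass it by establishing an intrinsic base-change property of the different of a finite extension of one-dimensional analytic fields, which should follow from the definitions in~\cite{CTT14} and would make the lemma essentially formal.
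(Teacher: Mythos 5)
Your proposal is correct and follows essentially the same route as the paper: apply the formula $\delta_\varphi(y)=\bigl|\tfrac{dt_x}{dt_y}\bigr|\,\tfrac{r(t_y)}{r(t_x)}$ on both sides and check that each factor is invariant under $\sigma_K$, using the compatibility $\sigma_K(\psi_y(y))=\psi_{\sigma_K(y)}(\sigma_K(y))$ and the preservation of radii on the line. The only divergence is in verifying that the parameters remain tame after extension of scalars: the paper splits by type, invoking Lemma~\ref{lem:phiyK} for type~2 points and, for types~3 and~4, choosing $t_y$ with $\widehat{k(t_y)}\cong\sH(y)$ so the issue disappears, which is cleaner than your uniform argument via ramification index and residual separability (whose appeal to Lemma~\ref{lem:CxK} really only covers type~2).
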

\begin{proof}
Let~$K$ be a complete valued extension of~$k$. Assume that~$y$ is of type~3 or~4. Then, $y$ has a neighborhood isomorphic to an analytic domain of the affine line, hence there exists $t_{y} \in \cO_{y}$ such that $\widehat{k(t_{y})}$ is isomorphic to~$\sH(y)$. Then $t_{y}$ is a parameter at~$y$ and its image~$t_{\sigma_{K}(y)}$ in $\sH(\sigma_{K}(y))$ is also a parameter at~$\sigma_{K}(y)$. Note also that the explicit description of the points of the line shows that we have $r(t_{\sigma_{K}(y)}) = r(t_{y})$. Arguing similarly for the point~$x$ (which has the same type as~$y$), the formula follows.

Let us now assume that~$y$ is of type~2. Choose a tame parameter~$t_{y}$ at~$y$. It follows from Lemma~\ref{lem:phiyK} that its image~$t_{\sigma_{K}(y)}$ in $\sH(\sigma_{K}(y))$ is also a parameter at~$\sigma_{K}(y)$. By \cite[Lemma~2.9]{Poi-Pul2}, we have $\sigma_{K}(\psi_{y}(y)) = \psi_{\sigma_{K}(y)}(\sigma_{K}(y))$, hence $r(t_{\sigma_{K}(y)}) = r(t_{y})$ by a computation on the line as before. Arguing similarly for~$x$, the result follows.
\end{proof}

\begin{example}\label{ex:degreep}
Let~$y\in Y^\hyp$. Suppose now that $\text{char}(\tilde{k})=p>0$ and that $\vphi$ is residually purely inseparable of degree $p$ at $y$ (\ie the residual separable degree of~$\varphi$ at~$y$ is equal to the residual degree of~$\varphi$ at~$y$ and equal to~$p$). Then, for each $s \in [0,1]$, we have 
\[
f^y_{\varphi}(s) = 
\begin{cases}
\delta_{\varphi}(y)s \quad \, \textrm{ if } s\in [0,\delta_{\varphi}(y)^{\frac{1}{p-1}})\\
s^p \qquad \quad \textrm{ if } s\in [\delta_{\varphi}(y)^{\frac{1}{p-1}},1].\\
\end{cases}
\]

Indeed, let~$K$ be an algebraically closed complete valued extension of~$\sH(y)$. By Lemma~\ref{lem:profileK}, we have $f_{\varphi_{K}}^{\sigma_{K}(y)} = f_{\varphi}^y$ and by Lemma~\ref{lem:differentK}, we have $\delta_{\varphi_{K}}(\sigma_{K}(y)) = \delta_{\varphi}(y)$, hence, up to replacing~$y$ by~$\sigma_{K}(y)$, we may assume that~$y$ is of type~2. The result then follows from Proposition~\ref{prop:comparison} and from the similar result for Temkin's profile function, see \cite[Theorem 3.3.7]{TemkinHerbrand}.
\end{example}

\section{$p$-adic differential equations}

\subsection{Modules with an integrable connection} Let $X$ be a quasi-smooth $k$-analytic curve. By a differential equation on $X$ we mean a pair $(\sE,\nabla)$ where $\sE$ is a locally free $\cO_X$-module $\sE$ of finite type equipped with an integrable connection, \ie a $k$-linear map $\nabla:\sE\to  \Omega^1_{X}\otimes \sE$ which satisfies the Leibniz rule
\[
\nabla(f\otimes e)=df\otimes e +f\nabla(e).
\]
We say that the differential equation has rank~$r$ if~$\sE$ has rank~$r$. We put $\sE^{\nabla}:=\ker \nabla$ and the elements of the latter sheaf are called the horizontal sections of $(\sE,\nabla)$. 

If $K/k$ is an extension of valued fields, then one may consider the pullback $\pi^\ast_K(\sE,\nabla)$, which is a differential equation on the curve $X_K$.

\subsection{Multiradius of convergence}
We first define radii of convergence at rational points of discs. 


\begin{definition}\label{def:multiradius}
Let $(\sE,\nabla)$ be a differential equation of rank~$r$ on a strict open disc~$D$. Let $x\in D(k)$. Let $i\in\{1,\dotsc,r\}$. We define the $i^\textrm{th}$ radius of convergence of~$\ena$ at~$x$ to be  
\begin{equation}\label{radius}
\cR_i\big(x,(\sE,\nabla)\big):=\sup\big\{s\in (0,1) \mid \dim_{k}\big(H^0(D(x,s^-),(\sE,\nabla))\big) \ge r- i+1 \big\}
\end{equation}
and the multiradius of convergence of~$\ena$ at~$x$ to be 
\begin{equation*}\label{multiradius}
\cM\cR\big(x,(\sE,\nabla)\big) := \big(\cR_1(x,(\sE,\nabla)),\dots,\cR_r(x,(\sE,\nabla))\big).
\end{equation*}
\end{definition}

We now define radii of convergence at a non-rational point~$x$ of a quasi-smooth $k$-analytic curve~$X$. Let~$K$ be a complete algebraically closed extension of~$\sH(x)$. By Lemma~\ref{lem:fiber}, the connected components of $\pi_{K}^{-1}(x) \setminus \{\sigma_{K}(x)\}$ are strict open discs. Let $x_{K}$ be a $K$-rational point above $x$ and let ~$D^g_{x}$ be the connected component of $\pi_{K}^{-1}(x) \setminus \{\sigma_{K}(x)\}$ that contains $x_K$.

\begin{notation}\label{not:dimH0}
For $s\in (0,1)$, we set 
\[\dim_{k}\big(H^0(D(x,s^-),(\sE,\nabla))\big) := \dim_{K}\big(H^0(D(x_{K},s^-),\pi_{K}^*(\sE,\nabla)_{|D_{x}^g})\big).\]
\end{notation}

\begin{definition}\label{def:multiradiusarbitrary}
Let $(\sE,\nabla)$ be a differential equation of rank~$r$ on $X$. Let $i\in\{1,\dotsc,r\}$. We define the $i^\textrm{th}$ radius of convergence of~$\ena$ at~$x$ to be  
\begin{equation*}
\cR_i\big(x,(\sE,\nabla)\big):=\sup\big\{s\in (0,1) \mid \dim_{k}\big(H^0(D(x,s^-),(\sE,\nabla))\big) \ge r- i+1 \big\}
\end{equation*}
and the multiradius of convergence of~$\ena$ at~$x$ to be 
\begin{equation*}
\cM\cR\big(x,(\sE,\nabla)\big) := \big(\cR_1(x,(\sE,\nabla)),\dots,\cR_r(x,(\sE,\nabla))\big).
\end{equation*}
\end{definition}


The notation and definition above are independent of the choice of the field~$K$ and of the $K$-rational point $x_K$ above $x$ (see \cite[Section~2.3]{Poi-Pul2}). 


\begin{remark}
Our definition of the (multi)radius of convergence corresponds to what is usually called the spectral or intrinsic multiradius of convergence in the literature. 
\end{remark}

\subsection{Convergence polygons}

Let $(\sE,\nabla)$ be a differential equation of rank~$r$ on a quasi-smooth $k$-analytic curve $X$.
 
\begin{defn}\label{conv polygon}
 Let $x \in X^\hyp$. For each $i=1,\dotsc,r$, we set 
 \[h_i\big(x,\ena\big):= - \sum_{j=1}^i \log(R_j\big(x,\ena\big)).\]
The {\em convergence polygon} $\cN\big(x,\ena\big)$ of $(\sE,\nabla)$ at the point $x$ is the concave polygon with vertices 
 \[
 (0,0), \quad\big(i,h_i(x,\ena)\big),\quad i=1,\dots, r.
 \] 
The {\em height} of the convergence polygon at the point $x$ is $h\big(x,\ena\big):=h_r\big(x,\ena\big)$. Again, if no confusion can arise, we may omit writing $(\sE,\nabla)$.
\end{defn}

 \begin{theorem}\label{radius continuity} 
 Let $i\in \{1,\dotsc,r\}$. Then, the $i^\textrm{th}$ radius function 
 \[\cR_{i}\big(\cdot,\ena\big): X^\hyp\to (0,1]^r\] 
 is continuous and piecewise $|k^*|$-monomial on each interval in~$X^\hyp$. Moreover, for each $x\in X^{(2)}$, it is monomial of degree~1 along almost every direction emanating from~$x$.
\end{theorem}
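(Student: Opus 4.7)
The plan is to establish the three claims --- continuity, piecewise $|k^*|$-monomiality on each interval, and degree-$1$ monomiality along almost every direction at type~$2$ points --- by reducing to the case of open discs and open annuli. By the semistable reduction theorem recalled above, up to removing a locally finite subset, every quasi-smooth $k$-analytic curve is a disjoint union of such elementary building blocks; and since each of the three statements is local in the appropriate sense (at a point, along an interval, along a branch), it is enough to treat one building block at a time.

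First I would handle continuity and piecewise $|k^*|$-monomiality on a canonical path $l_{z}$ inside an open disc~$D$, and symmetrically on the skeleton of an open annulus. Trivializing $\ena$ over $D(z,\rho)$ and writing $\nabla = d + G\,dT$, the Dwork--Robba transfer theorem expresses $\cR_{i}(z_{\rho},\ena)$ in terms of the spectral norms of the operators $\nabla^{j}/j!$ on $D(z,\rho)$; these norms are maxima of finitely many $|k^{*}|$-monomial functions of~$\rho$, as one reads off from the Newton polygon of an associated scalar differential operator. Continuity of the sup and piecewise $|k^*|$-monomiality of each $\cR_{i}$ follow at once. The full argument, carried out in this generality for arbitrary quasi-smooth curves, appears in \cite[Ch.~11]{pde}, \cite{Pul}, and \cite{Poi-Pul2}.

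For the third assertion I would argue at $x\in X^{(2)}$ that only finitely many branches out of~$x$ can fail to carry degree-$1$ behaviour of~$\cR_{i}$. The branches out of~$x$ are in bijection with the closed points of the residue curve~$\sC_{x}$; for all but finitely many such closed points the corresponding germ of open disc admits a good model over which a suitable reduction of $\ena$ has no extra singularity, and on such a germ $\cR_{i}$ must then be, up to a $|k^{*}|$-multiplicative constant, the identity in the radius coordinate near~$x$. Packaged differently, this is the statement that the Laplacian of $h_{i}(\wc,\ena)$ is supported at finitely many branches at~$x$ --- the super-harmonicity / local finiteness result established on the line in \cite{Pul} and extended to quasi-smooth curves in \cite{Poi-Pul2} and \cite{Poi-Pul3}.

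The hard part of a self-contained argument is precisely this last statement: controlling $\cR_{i}$ \emph{uniformly} across the infinitely many branches at a type~$2$ point. A branch-by-branch estimate using Dwork--Robba yields only local finiteness after one has fixed a direction, so to obtain global finiteness of exceptional branches one needs either the integer-slope property of the convergence polygon (via a Christol--Mebkhout-type slope filtration together with Frobenius and tame pushforward arguments) or a good formal model whose special fibre encodes the combinatorics of all exceptional directions simultaneously. Both are technically substantial, which is why in the context of the present paper it is natural to invoke the cited references as a black box rather than reprove them here.
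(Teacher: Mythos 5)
Your proposal is correct and takes essentially the same approach as the paper: Theorem~\ref{radius continuity} is a recalled result, and the paper's own ``proof'' consists only of citations to \cite[Theorem 11.3.2]{pde} for skeletons of annuli, to \cite{Bal10} for $\cR_1$ on higher-genus curves, and to \cite[Theorem~3.6]{Poi-Pul2} for the remaining cases. Your sketch of the underlying techniques (Dwork--Robba transfer on discs and annuli, finiteness of exceptional branches at a type~2 point) is a reasonable gloss on what those references do, and like the paper you ultimately, and appropriately, invoke them as a black box.
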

\begin{proof}
For an interval corresponding to a skeleton of an annulus, this follows from \cite[Theorem 11.3.2]{pde}. However, instances of it were known much earlier and we refer to the Notes at the end of Section~11 in \textit{ibid.} for a detailed account. For higher genus curves and intervals with type 2 and 3 endpoints and $\cR_1$ this is in \cite{Bal10}. The rest of the cases are in \cite[Theorem~3.6]{Poi-Pul2}.   
\end{proof}

Let~$x$ be a non-rational point of~$X$, $\vt$ be a tangent direction at~$x$ and~$I$ be an interval in $X$ corresponding to $\vt$. It follows from the previous theorem that the function $h(x',\ena)$ has constant degree for $x'\in I$ close enough to $x$. Denote this degree by~$n$.
\begin{defn}
The quantity $-n$ is called the irregularity of $\ena$ at the point $x$ in the direction $\vt$ and is denoted by $\Irr_\vt\big(x,\ena\big)$. 
If $x$ is clear from the context, we may also write $\Irr_\vt\ena$.
\end{defn}

In explicit terms let $A_\vt$ be a small enough strict open annulus in $X$ attached to $x$ that corresponds to $\vt$ and normalized via some coordinate $T:A_\vt\iso A(0;q,1)$ so that the point~$\eta_{\rho}$ of the skeleton tends to~$x$ when~$\rho$ tends to~1.
Let us define the function $I \colon s \in (\log r,0)\mapsto h(\eta_{\rho}) \in \R$,  where $s=\log\rho$. We then have
 \[
 \Irr_\vt\big(x,\ena\big):=-\frac{d}{ds}I(0-).
 \]
  
It follows from Theorem \ref{radius continuity}, and more precisely from piecewise monomiality of the multiradius function, that the irregularity is well defined.

The previous definition can be globalized in the following way. Let $\Gamma$ be a finite subset of $T_xX$. Then, we define the Laplacian of $\ena$ at $x$ along $\Gamma$ to be  
\[
\Delta_x\big(\Gamma,\ena\big):=\sum_{\vt\in \Gamma}\Irr_\vt\ena.
\]

\subsection{Pushforward of differential equations}
Let $\vphi:Y\to X$ be a finite \'etale morphism of quasi-smooth $k$-analytic curves. Let $(\sE,\nabla_{\sE})$ be a differential equation on $X$.

Set $\sF:=\vphi_*(\sE)$. Since $\vphi$ is \'etale, we have $\Omega^1_Y=\vphi^*\Omega^1_X$ hence, due to the projection formula, a map
\[\nabla_\sF:\vphi_*(\sE)\to  \vphi_*(\Omega^1_Y\otimes_{\sO_Y}\sE) \simeq \Omega^1_X\otimes_{\sO_X}\vphi_*(\sE).\]
The couple $\vphi_*(\sE,\nabla_{\sE})=:(\sF,\nabla_{\sF})$ is a $p$-adic differential equation on $Y$ called the {\em pushforward} of $(\sE,\nabla_{\sE})$ by~$\varphi$. 

To lighten the notation, we will use the same symbol $\nabla$ for both connections $\nabla_{\sE}$ and $\nabla_{\sF}$, hoping that it will be clear from the context what connection it represents.
\begin{rmk}\label{rmk:pushforward}
 Note that it follows from the definition above that for any analytic domain~$U$ of~$X$, we have
 \begin{equation}\label{solutions pf}
 H^0\big(\vphi^{-1}(U),\ena\big)\xrightarrow[]{\sim} H^0\big(U,\fna\big).
 \end{equation}
 This fact will be extensively used in the next section.
\end{rmk}

\section{The main results}

\subsection{Pushforward of a connection: open discs} 

Let $\vphi:D_2\to D_1$ be a finite \'etale morphism of degree $d$ between srict open discs. Let $\ena$ be a differential equation of rank~$r$ on~$D_2$ and let $\fna:=\vphi_*\ena$ be its pushforward on~$D_2$ by~$\vphi$.

\begin{defn}
Let $x\in D_1(k)$ be a rational point. For $s\in (0,1]$, we set
\begin{equation*}
\Phi_x\big(s,\vphi,\ena\big) := \dim_k H^0\big(\varphi^{-1}(D(x,s^-)),\ena\big).
\end{equation*}
\end{defn}

It is clear that, for each $x\in D_{1}(k)$,
the function $\Phi_x$ is non-increasing, left-continuous and bounded by~$rd$. In particular, it has finitely many breaks. Note also that, since, by Lutz' theorem \cite[Th\'eor\`eme IV]{Lut37}, differential equations admit full sets of solutions in the neighborhood of rational points, we have $\Phi_x(s)=rd$ for~$s$ close enough to~0.

\begin{remark}\label{rem:PhiOd}
Assume that $\ena=(\Os^{r},d^{r})$ is the constant connection. In this case, for each $x\in D_1(k)$ and every $s\in (0,1]$, $\dim_k H^0\big(\varphi^{-1}(D(x,s^-)),(\Os^{r},d^{r})\big)$ is nothing but the number of connected components of $\varphi^{-1}\big(D(x,s^-)\big)$ multiplied by the rank~$r$. We deduce that, for each $x\in D_{1}$, we have $\Phi_{x} = rN_{x}$.
\end{remark}

For $R \in [0,1)$, we set $\Phi_{x}(R^+) := \lim_{s \to R^+} \Phi_{x}$.

\begin{proposition}\label{pushforward over discs}
Let $x\in D_1(k)$. 
We have
\begin{equation*}
\#\{i \in \{1,\dotsc,rd\} \mid \cR_{i}(x,\fna) = 1\} = \Phi_{x}(1)
\end{equation*}
and, for each $R\in (0,1)$,
\begin{equation*}
\#\{i \in \{1,\dotsc,rd\} \mid \cR_{i}(x,\fna) = R\} = \Phi_{x}(R) - \Phi_{x}(R^+).
\end{equation*}
In particular, the radii of convergence of~$\fna$ at the point~$x$ that are smaller than~1 are exactly the break-points of~$\Phi_{x}$.
\end{proposition}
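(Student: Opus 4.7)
The first move is to translate the statement entirely into an assertion about the pushforward $\fna$ on $D_{1}$. By Remark~\ref{rmk:pushforward} (the isomorphism \eqref{solutions pf}), for every $s\in(0,1]$ we have
\[
\Phi_{x}(s) \;=\; \dim_{k} H^{0}\bigl(\vphi^{-1}(D(x,s^-)),\ena\bigr) \;=\; \dim_{k} H^{0}\bigl(D(x,s^-),\fna\bigr).
\]
Combined with Definition~\ref{def:multiradius} applied to the pushforward (which has rank $rd$), this gives the key reformulation
\[
\cR_{i}(x,\fna) \;=\; \sup\bigl\{\,s\in(0,1) \,\bigm|\, \Phi_{x}(s) \ge rd - i + 1\,\bigr\}.
\]
Thus the proposition will be a formal consequence of the relation between a non-increasing left-continuous step function and the generalized inverse defined by the above sup.

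Next I would record the precise structural properties of~$\Phi_{x}$ used in the argument: it is non-increasing on $(0,1]$ (by inclusion of solution spaces on nested discs), bounded above by $rd$, equal to~$rd$ near $0$ (this is where Lutz' theorem comes in, since~$x$ is rational and so its preimages under~$\vphi$ are also rational, giving a full basis of local solutions), and left-continuous. The sets $\{s\in(0,1)\mid \Phi_{x}(s)\ge N\}$ are therefore intervals starting arbitrarily close to~$0$. From non-increasing monotonicity, one obtains the equivalence
\[
\cR_{i}(x,\fna) > R \iff \Phi_{x}(R^{+}) \ge rd-i+1,
\]
since the sup is strictly greater than $R$ iff the defining set contains some $s>R$, which by monotonicity is controlled by the right-hand limit. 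Dually, using left-continuity of~$\Phi_{x}$,
\[
\cR_{i}(x,\fna) \ge R \iff \Phi_{x}(R) \ge rd-i+1,
\]
because if the sup equals $R$ but $R$ itself is not in the defining set, then $\Phi_{x}(s)\ge rd-i+1$ for $s$ approaching $R$ from below, which by left-continuity forces $\Phi_{x}(R)\ge rd-i+1$, a contradiction.

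Subtracting the two previous counts gives, for any $R\in(0,1)$,
\[
\#\{i \in \{1,\dotsc,rd\} \mid \cR_{i}(x,\fna)=R\} \;=\; \Phi_{x}(R) - \Phi_{x}(R^{+}),
\]
which proves the second claim. For the first claim, since radii always lie in $(0,1]$, we have $\cR_{i}(x,\fna)=1$ iff $\cR_{i}(x,\fna)\ge 1$, and this last condition is equivalent to the defining set being unbounded in $(0,1)$, hence, by left-continuity at~$1$, to $\Phi_{x}(1)\ge rd-i+1$. Counting the $i$'s yields exactly $\Phi_{x}(1)$. The ``in particular'' conclusion is immediate: a value $R\in(0,1)$ occurs as some $\cR_{i}(x,\fna)$ precisely when $\Phi_{x}(R)>\Phi_{x}(R^{+})$, which is exactly the set of break-points of the step function~$\Phi_{x}$. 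There is no real obstacle here: the argument is a purely formal inversion of a step function, and the only non-trivial inputs are the remark that identifies $\Phi_{x}$ with the dimension of sections of the pushforward and the left-continuity of $\Phi_{x}$ (both established in the text preceding the statement).
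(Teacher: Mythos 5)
Your proposal is correct and follows exactly the paper's route: the entire content of the paper's proof is the reformulation $\cR_{i}(x,\fna)=\sup\{s\in(0,1)\mid \Phi_{x}(s)\ge rd-i+1\}$ via \eqref{solutions pf}, after which the paper simply writes ``the result follows.'' Your careful unwinding of that last step (the two equivalences using monotonicity and left-continuity of $\Phi_{x}$, and the subtraction of counts) is precisely the formal step-function inversion the authors leave implicit.
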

\begin{proof}
Note that~$\sF$ has rank~$rd$. 
Following formula \eqref{radius} and using \eqref{solutions pf}, we may write
 \begin{align*}
 \cR_{i}\big(x,\fna\big)&=\sup\Big\{s\in(0,1)\mid \dim_kH^0\big(D(x,s^-),\fna\big)\geq rd-i+1\Big\}\\
 &=\sup\Big\{s\in(0,1)\mid \dim_kH^0\big(\varphi^{-1}(D(x,s^-)),\ena\big)\geq rd-i+1\Big\}\\
 &=\sup\big\{s\in(0,1)\mid \Phi_x(s)\geq rd-i+1\big\}.
 \end{align*}
The result follows.
\end{proof}

In the rest of the section, we will study manifestations of the previous theorem for particular cases of the connection $\ena$.

\begin{corollary}\label{push constant connection}
Assume that $\ena=(\Os,d)$ is the constant connection on $D_2$ and let $x\in D_{1}(k)$.
Let $0 = s_{0} <s_1<\dots<s_{n-1} < s_{n}=1$ be the breaks of the function $N_x$. For $i=1,\dots,n$, set $n_{x,i}:=N_x(s_i)$. Then, the radii of convergence of $\vphi_{\ast}(\Os,d)$ at~$x$ are 

\begin{equation}\begin{cases}\label{pushforward constant formula}
\cR_{1}\big(x,\vphi_{\ast}(\Os,d)\big) = \dotsb = \cR_{d-n_{x,2}}\big(x,\vphi_{\ast}(\Os,d)\big) = s_{1}~;\\
\cR_{d-n_{x,2}+1}\big(x,\vphi_{\ast}(\Os,d)\big) = \dotsb = \cR_{d-n_{x,3}}\big(x,\vphi_{\ast}(\Os,d)\big) = s_{2}~;\\
\vdots\\
\cR_{d-n_{x,n-1}+1}\big(x,\vphi_{\ast}(\Os,d)\big) = \dotsb = \cR_{d-1}\big(x,\vphi_{\ast}(\Os,d)\big) = s_{n-1}~;\\
\cR_{d}\big(x,\vphi_{\ast}(\Os,d)\big) = 1.
\end{cases}
\end{equation}
\end{corollary}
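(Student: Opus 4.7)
The plan is to combine Remark~\ref{rem:PhiOd} with Proposition~\ref{pushforward over discs}, since in the constant-connection case the solution-counting function $\Phi_{x}$ reduces to the simpler geometric quantity $N_{x}$ counting connected components of preimage discs.

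First I would observe that, since $\ena = (\Os,d)$ has rank~$r=1$, Remark~\ref{rem:PhiOd} yields the identification $\Phi_{x}(s) = N_{x}(s)$ for every $s\in (0,1]$. Consequently, the break-points of $\Phi_{x}$ in $(0,1)$ coincide with those of $N_{x}$, namely $s_{1}<\dots<s_{n-1}$, and the corresponding jumps have size $N_{x}(s_{i}) - N_{x}(s_{i}^{+}) = n_{x,i} - n_{x,i+1}$ for $i = 1,\dotsc,n-1$, using left-continuity of $N_{x}$. The value at $s=1$ is $\Phi_{x}(1) = N_{x}(1) = n_{x,n}$.

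Next, I would pin down the two endpoint values. Since $\vphi$ is \'etale of degree $d$, its fiber over $x$ consists of exactly $d$ rational points, and in a small enough neighborhood $\vphi$ restricts to an isomorphism from each of them onto its image. Hence, for $s>0$ small enough, $\vphi^{-1}(D(x,s^-))$ is a disjoint union of $d$ open subdiscs, and by left-continuity of $N_{x}$ on $(0,s_{1}]$ we conclude $n_{x,1} = N_{x}(s_{1}) = d$. On the other hand, $\vphi^{-1}(D_{1}) = D_{2}$ is a (connected) open disc, so $n_{x,n} = N_{x}(1) = 1$.

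Finally, I would apply Proposition~\ref{pushforward over discs} to the function~$\Phi_{x}=N_{x}$: the number of indices $i\in\{1,\dotsc,d\}$ with $\cR_{i}(x,\fna)=1$ equals $\Phi_{x}(1)=1$, giving $\cR_{d}(x,\fna)=1$, and for each $i=1,\dotsc,n-1$ the number of indices with $\cR_{i}(x,\fna)=s_{i}$ equals $n_{x,i}-n_{x,i+1}$. Listing the radii in non-decreasing order, the $d-n_{x,2}$ smallest radii are equal to $s_{1}$ (using $n_{x,1}=d$), the next $n_{x,2}-n_{x,3}$ radii equal $s_{2}$, and so on, which rearranges to exactly the formula~\eqref{pushforward constant formula}. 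The only mildly delicate point is making sure the left-continuity of $N_{x}$ is compatible with the convention that $n_{x,i}=N_{x}(s_{i})$ records the value \emph{before} the jump, so that the combinatorics of the jumps lines up with the indexing of the radii; once that is checked the statement follows.
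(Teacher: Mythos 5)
Your proposal is correct and follows essentially the same route as the paper: the printed proof likewise combines Remark~\ref{rem:PhiOd} (giving $\Phi_{x}=N_{x}$ for $r=1$) with Proposition~\ref{pushforward over discs}, computes $\Phi_{x}(1)=N_{x}(1)=1$ from $\varphi^{-1}(D_{1})=D_{2}$, and reads off the jumps $\Phi_x(s_i)-\Phi_x(s_i^+)=n_{x,i}-n_{x,i+1}$ with $n_{x,1}=d$. Your additional justification of $n_{x,1}=d$ via \'etaleness and the ordering check are fine and merely make explicit what the paper leaves implicit.
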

\begin{proof}
The result follows directly from Proposition~\ref{pushforward over discs} and Remark~\ref{rem:PhiOd}. Indeed, we have 
\[\Phi_{x}(1) = N_{x}(1) = 1\] 
since it is, by definition, the number of connected components of $\varphi^{-1}(D(x,1^-)) = \varphi^{-1}(D_{1}) = D_{2}$ and, for each $i=1,\dotsc,n-1$, we have 
\[\Phi_x(s_{i}) - \Phi_x(s_{i}^+) = \Phi_{x}(s_{i}) - \Phi_{x}(s_{i+1}) = n_{x,i} - n_{x,i+1},\]
with $n_{x,n} = N_{x}(1) = 1$ and $n_{x,1} = N_{x}(s_{1}) = d$. 
\end{proof}
 
For the next corollary and the results that follow we will use the following operation. If~$F$ and~$F'$ are two finite non-decreasing families of real numbers, we denote by~$F \ast F'$ the family obtained by concatenation and reordering. If~$n$ is a positive integer, we denote by~$F^{\ast n}$ the family $F \ast \dotsb \ast F$ with $n$~copies of~$F$. 
 
\begin{corollary}\label{cor:radial over discs}
Assume that $\vphi:D_2\to D_1$ is radial with profile function $f:=f_{\varphi}$. Let $x\in D_{1}(k)$ and let $0 = s_{0} <s_1<\dots<s_{n-1}<s_n=1$ be the breaks of the function $N_x$. For $i=1,\dots,n$, set $n_{x,i}:=N_x(s_i)$. Suppose that there exist $R_{1} \le \dotsb \le R_{r}$ such that, for each $y\in \vphi^{-1}(x)$, we have 
\[\cM\cR\big(y,\ena\big)=(R_1,\dots,R_r).\] 
For $i=1,\dots,r$, let $m_{i}$ be the unique element of $\{0,\dotsc,n-1\}$ such that $s_{m_{i}}<f(R_i)\leq s_{m_{i}+1}$ and set
\[F_i:=\big(s_1,\dots,s_1,\dots,s_{m_{i}}\dots,s_{m_{i}},f(R_i)\dots,f(R_i)\big),\] 
where, for $j=1,\dotsc,m_{i}$, $s_j$ appears $n_{x,j}-n_{x,j+1}$ times and $f(R_i)$ appears $n_{x,m_{i}+1}$ times. Then, we have
\begin{equation}\label{eq:astfirst}
\cM\cR(x,\fna)=\bigast_{i=1}^rF_i.
\end{equation}
\end{corollary}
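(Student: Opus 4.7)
The plan is to apply Proposition~\ref{pushforward over discs}, which reduces the computation of $\cM\cR(x, \fna)$ to determining the function $\Phi_x$ together with its break-points and the sizes of the corresponding jumps.

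The first step is to compute $\Phi_x(s)$ explicitly. By Lemma~\ref{lem:imagepreimage} and the radiality of $\varphi$, for every $s \in (0, 1]$ the preimage $\varphi^{-1}(D(x, s^-))$ is a disjoint union of $N_x(s)$ open subdiscs of $D_2$, all of the same radius. Fixing any preimage $y \in \varphi^{-1}(x)$, the connected component containing $y$ is exactly $D(y, f^{-1}(s)^-)$, since $\varphi(D(y, \rho^-)) = D(x, f(\rho)^-)$ for every $\rho \in (0,1]$ by definition of the profile function. The definition of the multiradius at $y$ then yields
\[
\dim_k H^0(D(y, f^{-1}(s)^-), \ena) = \#\{i : R_i \geq f^{-1}(s)\} = \#\{i : f(R_i) \geq s\},
\]
using that $f$ is strictly increasing. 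Since by assumption the multiradius at every preimage of $x$ is $(R_1, \dots, R_r)$, the same count holds on each component, and summing over the $N_x(s)$ components gives the key identity
\[
\Phi_x(s) = N_x(s) \cdot \#\{i : f(R_i) \geq s\}.
\]

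Next I would read off the multiradius via Proposition~\ref{pushforward over discs}: the candidate break-points of $\Phi_x$ lie in $\{s_1, \dots, s_{n-1}\} \cup \{f(R_1), \dots, f(R_r)\} \cup \{1\}$, and at each the jump $\Phi_x(R) - \Phi_x(R^+)$ (respectively $\Phi_x(1)$ for $R=1$) gives the multiplicity of $R$ in $\cM\cR(x, \fna)$. On the combinatorial side, by inspection of the definition of $F_i$, the value $s_j$ (for $1 \leq j < n$) appears in $F_i$ either in its ``$s_j$-slot'' with multiplicity $n_{x,j} - n_{x, j+1}$ (when $j \leq m_i$, i.e., $f(R_i) > s_j$) or in its ``$f(R_i)$-slot'' with multiplicity $n_{x, m_i + 1} = n_{x, j}$ (when $f(R_i) = s_j$, forcing $m_i = j - 1$); the values $f(R_i) \notin \{s_1,\dots,s_n\}$ appear only through the latter slot.

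The main obstacle is the simultaneous-break case $R = s_j = f(R_i)$, where both factors of $\Phi_x = N_x \cdot \#\{k : f(R_k) \geq \cdot\}$ drop at the same point. Writing $c := \#\{k : f(R_k) \geq s_j\}$ and $\mu := \#\{k : f(R_k) = s_j\}$, the jump equals $\Phi_x(s_j) - \Phi_x(s_j^+) = n_{x,j} c - n_{x, j+1}(c - \mu) = c(n_{x,j} - n_{x, j+1}) + \mu\, n_{x, j+1}$, which must match the total multiplicity of $s_j$ in $\bigast_k F_k$, namely $(c - \mu)(n_{x,j} - n_{x,j+1}) + \mu \, n_{x, j}$; an elementary algebraic manipulation confirms the two expressions are equal. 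The remaining cases (where $R$ is only a break of $N_x$, only a value $f(R_i)$, or the boundary case $R = 1 = s_n$ with $m_i = n-1$ and $n_{x,n}=1$) are strictly easier, since only one of the two factors jumps. Together these verifications yield $\cM\cR(x, \fna) = \bigast_{i=1}^r F_i$.
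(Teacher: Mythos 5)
Your proposal is correct and follows essentially the same route as the paper: both establish the key identity $\Phi_x(s)=N_x(s)\cdot\#\{i \mid R_i\ge f^{-1}(s)\}$ via radiality and Lemma~\ref{lem:imagepreimage}, then apply Proposition~\ref{pushforward over discs} and run the same case analysis on the break-points, with the simultaneous-break case $f(R_i)=s_j$ resolved by the same elementary rearrangement of the jump. No gaps.
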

\begin{proof}
By assumption, for each $y\in \varphi^{-1}(x)$ and each $t \in (0,1]$, we have
\[\dim_{k} H^0 (D(y,t^-),\ena) =  \#\{i \in \{1,\dotsc,r\} \mid R_{i} \ge t\}.\]

Let $s\in (0,1]$. We denote by $D_1(s),\dotsc,D_{N_x(s)}(s)$ the open discs that are the connected components of $\varphi^{-1}(D(x,s^-))$. Since~$\varphi$ is radial, they all have the same radius, namely~$f^{-1}(s)$. We deduce that
\begin{align*}
\Phi_{x}(s) &= \dim_{k} H^0\big(\varphi^{-1}(D(x,s^-)),\ena\big)\\
 &= \sum_{j = 1}^{N_{x}(s)} \dim_{k} H^0\big(D_{j}(s),\ena\big)\\
 &= N_{x}(s) \cdot \#\{i \in \{1,\dotsc,r\} \mid R_{i} \ge f^{-1}(s)\}.
\end{align*}
If follows that the set of break-points~$B$ of the function~$\Phi_{x}$ is contained in
\[\{s_{1},\dotsc,s_{n},f(R_{1}),\dotsc,f(R_{r})\}.\]
Noting that $\#\{i \in \{1,\dotsc,r\} \mid R_{i} \ge f^{-1}(s)\} =0$ if, and only if, $s> f(R_{r})$, it follows that
\[B = \{s_{1},\dotsc,s_{m_{r}},f(R_{1}),\dotsc,f(R_{r})\}.\]

Let~$b\in B$. We will now compute the multiplicity of~$b$ in $\cM\cR(x,\fna)$ using Proposition~\ref{pushforward over discs} and show that it is equal to its multiplicity in $F := \bigast_{i=1}^r F_{i}$. To this end, remark that, for each $b\in (0,1)$, we have
\[\Phi_{x}(b^+) = N_{x}(b^+) \cdot \#\{i \in \{1,\dotsc,r\} \mid R_{i} > f^{-1}(b)\},\]
where $N_{x}(b^+) = \lim_{s \to b^+} N_{x}(s)$.

$\bullet$ Assume that there exists $u\in \{1,\dotsc,m_{r}\}$ such that $b = s_{u}$ and $b \ne f(R_{v})$ for each $v\in \{1,\dotsc,r\}$.

Then, we have $b=s_{u}\le s_{m_{r}} < f(R_{r}) \le 1$, hence~$b$ appears in~$\cM\cR(x,\fna)$ with multiplicity
\[\Phi_{x}(b) - \Phi_{x}(b^+) = (n_{x,u} - n_{x,u+1})  \cdot \#\{i \in \{1,\dotsc,r\} \mid f(R_{i}) \ge s_{u}\}.\]
On the other hand, for each $i\in \{1,\dotsc,r\}$, $s_{u}$ appears in~$F_{i}$ with multiplicity $n_{x,u} - n_{x,u+1}$ when $s_{u} < f(R_{i})$ (which is equivalent to $s_{u} \le f(R_{i})$ in this case) and~0 otherwise. The result follows.

$\bullet$ Assume that there exists $v\in \{1,\dotsc,r\}$ such that $b = f(R_{v}) < s_{m_{v}+1}$.

Then, we have $b<1$ and~$b$ appears in~$\cM\cR(x,\fna)$ with multiplicity
\[\Phi_{x}(b) - \Phi_{x}(b^+) = n_{x,m_{v}+1} \cdot \#\{i \in \{1,\dotsc,r\} \mid R_{i} = R_{v}\}.\]
On the other hand, for each $i\in \{1,\dotsc,r\}$, $b$ appears in~$F_{i}$ with multiplicity~$n_{x,m_{v}+1}$ if $R_{i}=R_{v}$ and~0 otherwise. The result follows.

$\bullet$ Assume that there exists $v\in \{1,\dotsc,r\}$ such that $b = f(R_{v}) = s_{m_{v}+1} <1$.

Then, $m_{v} \le n-2$ and $b$ appears in~$\cM\cR(x,\fna)$ with multiplicity
\begin{eqnarray*}
\Phi_{x}(b) - \Phi_{x}(b^+) &=& n_{x,m_{v}+1} \cdot \#\{i \in \{1,\dotsc,r\} \mid R_{i} \ge R_{v}\}\\
&& - n_{x,m_{v}+2} \cdot \#\{i \in \{1,\dotsc,r\} \mid R_{i} > R_{v}\}\\
&=& (n_{x,m_{v}+1} - n_{x,m_{v}+2}) \cdot \#\{i \in \{1,\dotsc,r\} \mid R_{i} > R_{v}\}\\
&&+ n_{x,m_{v}+1} \cdot \#\{i \in \{1,\dotsc,r\} \mid R_{i} = R_{v}\}.
\end{eqnarray*}

Let $i\in \{1,\dotsc,r\}$. If $R_{i} > R_{v}$, then $b=s_{m_{v}+1}$ appears in~$F_{i}$ with multiplicity $n_{x,m_{v}+1} - n_{x,m_{v}+2}$. If $R_{i} = R_{v}$, then $b=f(R_{v})$ appears in~$F_{i}$ with multiplicity $n_{x,m_{v}+1}$. If $R_{i} < R_{v}$, then $b$ does not appear in~$F_{i}$. The result follows.

$\bullet$ Assume that $b=1$.

Then we have $R_{r} = 1$, $m_{r} = n-1$ and~$b$ appears in~$\cM\cR(x,\fna)$ with multiplicity
\[\Phi_{x}(1) = n_{x,n} \cdot \#\{i \in \{1,\dotsc,r\} \mid R_{i} = 1\}.\]  
On the other hand, for each $i\in \{1,\dotsc,r\}$, $b=1$ appears in~$F_{i}$ with multiplicity~$n_{x,n}$ if $R_{i}=1$ and~0 otherwise. The result follows.
\end{proof}

\subsection{Pushforward of a connection: the general case}

Let $\varphi \colon Y \to X$ be a finite \'etale morphism between quasi-smooth $k$-analytic curves. Let $\ena$ be a differential equation of rank~$r$ on~$Y$ and let $\fna:=\vphi_*\ena$ be its pushforward on~$X$ by~$\vphi$. Let~$y\in Y^\hyp$ and set $x:=\vphi(y)$.

We will use Notation~\ref{not:fyNy}. Let $0 = s^y_{0} < s^y_{1} < \dotsb < s^y_{n(y)-1} < s^y_{n(y)} = 1$ be the break-points of the function $(f^{y})^{-1}$ (or $N^{y}$). For $i=1,\dotsc,n(y)$, set $n^y_{i} := N^{y}(s^y_{i})$.

Let $R\in (0,1]$. Let $m \in \{0,\dotsc,n(y)-1\}$ be such that $s^y_{m} < f^{y}(R) \le s^y_{m+1}$. Denote by $F^{y}(R)$ the family
\[F^y(R) = (s^y_{1},\dotsc, s^y_{1}, \dotsc, s^y_{m},\dotsc,s^y_{m},f^{y}(R),\dotsc,f^{y}(R)),\]
where, for each $i=1,\dotsc,m$, $s^y_{i}$ appears $n^y_{i} - n^y_{i+1}$ times and $f^{y}(R)$ appears $n^y_{m+1}$ times.

\begin{theorem}\label{push general} 
The multiradius of convergence of $\fna = \varphi_{\ast}\ena$ at~$x$ is given by 
\[
\cM\cR(x,\fna) = \bigast_{\substack{y\in \varphi^{-1}(x) \\ 1\le i\le r}} F^y\Big(\cR_{i}\big(y,\ena\big)\Big)^{\ast \ks^{y}}.
\]
\end{theorem}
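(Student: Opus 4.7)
The plan is to reduce Theorem~\ref{push general} to the radial disc case of Corollary~\ref{cor:radial over discs} by an extension of scalars argument. I would first fix an algebraically closed complete valued extension $K$ of $\Hs(x)$ together with a $K$-rational point $x_K$ above~$x$, and let $D_x^g$ denote the connected component of $\pi_K^{-1}(x)\setminus\{\sigma_K(x)\}$ containing~$x_K$. By Notation~\ref{not:dimH0} and Definition~\ref{def:multiradiusarbitrary}, $\cM\cR(x,\fna)$ is the multiradius at~$x_K$ of the restriction of $\pi_K^*\fna$ to~$D_x^g$. Since $\vphi$ is finite \'etale, the standard base-change isomorphism $\pi_K^*\vphi_*\ena \cong (\vphi_K)_*\pi_K^*\ena$ (compatible with the connections) reduces the task to computing a pushforward along $\vphi_K$ on the preimage $\vphi_K^{-1}(D_x^g)$.

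Next, I would unravel the fiber geometry. For every $y\in\vphi^{-1}(x)$, one has $\vphi_K(\sigma_K(y))=\sigma_K(x)\notin D_x^g$, so $\vphi_K^{-1}(D_x^g)$ is contained in $\bigsqcup_{y\in\vphi^{-1}(x)}(\pi_K^{-1}(y)\setminus\{\sigma_K(y)\})$. Applying Lemma~\ref{lem:fiber} to the component $D=D_x^g$ for each such~$y$ yields
\[\vphi_K^{-1}(D_x^g) = \bigsqcup_{y\in\vphi^{-1}(x)} \bigsqcup_{j=1}^{\ks^y} D'_{y,j},\]
where each $D'_{y,j}$ is a strict open disc and the induced morphism $\psi_{y,j}\colon D'_{y,j}\to D_x^g$ is radial of degree~$\ki^y$ and profile~$f^y$. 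Because pushforward along a disjoint union splits as a direct sum of sections, one gets
\[\cM\cR(x,\fna) = \bigast_{\substack{y\in\vphi^{-1}(x)\\ 1\le j\le\ks^y}} \cM\cR\bigl(x_K,\,(\psi_{y,j})_*(\pi_K^*\ena)_{|D'_{y,j}}\bigr).\]

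I would then apply Corollary~\ref{cor:radial over discs} to each radial morphism $\psi_{y,j}$ at the rational point~$x_K$. The hypothesis that all preimages of $x_K$ share a common multiradius holds: each preimage is a $K$-rational point of $D'_{y,j}$, hence projects to~$y$ via $\pi_K$, and since $D'_{y,j}$ is itself a connected component of $\pi_K^{-1}(y)\setminus\{\sigma_K(y)\}$, it may serve as the component $D_y^g$ of Notation~\ref{not:dimH0}, so the multiradius of the preimage coincides with $(\cR_1(y,\ena),\dotsc,\cR_r(y,\ena))$ by the independence of the $K$-rational point above~$y$. Moreover, the break-points $s_i$ and values $n_{x_K,i}$ that appear in the corollary for $\psi_{y,j}$ are, by construction (Notation~\ref{not:fyNy} and Lemma~\ref{lem:relation}), exactly the $s_i^y$ and $n_i^y$ of the theorem. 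The corollary hence delivers $\bigast_{i=1}^r F^y(\cR_i(y,\ena))$, which depends only on~$y$; concatenating over all $(y,j)$ produces
\[\cM\cR(x,\fna) = \bigast_{\substack{y\in\vphi^{-1}(x)\\ 1\le i\le r}} F^y\bigl(\cR_i(y,\ena)\bigr)^{\ast\ks^y},\]
as claimed.

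The hard part is less conceptual than organizational: one must carefully spell out the base-change compatibility $\pi_K^*\vphi_*\ena\cong(\vphi_K)_*\pi_K^*\ena$ for finite \'etale morphisms of quasi-smooth $k$-analytic curves (together with its compatibility with the connections), and check that the invariants $f^y$, $N^y$, $\ks^y$, $\ki^y$ match the data fed into Corollary~\ref{cor:radial over discs} at each $\psi_{y,j}$. Both verifications are immediate from Lemma~\ref{lem:fiber}, Lemma~\ref{lem:relation} and Notation~\ref{not:fyNy}, so no new analytic input is needed beyond the radial disc case already treated.
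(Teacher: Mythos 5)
Your argument is correct and follows essentially the same route as the paper's proof: extend scalars to an algebraically closed $K \supseteq \Hs(x)$, use Lemma~\ref{lem:fiber} to identify $\vphi_K^{-1}(D_x^g)$ as a disjoint union of strict open discs mapping radially with profile $f^y$, and apply Corollary~\ref{cor:radial over discs} to each, using the independence of the multiradius on the choice of $K$-rational point above $y$. The only (immaterial) organizational difference is that the paper first shrinks $X$ and $Y$ to reduce to the case $\vphi^{-1}(x)=\{y\}$ before extending scalars, whereas you decompose $\vphi_K^{-1}(D_x^g)$ over all preimages $y$ at once.
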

\begin{proof}
Denote by~$y_{1},\dotsc,y_{q}$ the preimages of~$x$ by~$\varphi$. Since~$\varphi$ is finite, there exists an open neighborhood~$U$ of~$x$ in~$X$ and, for each $j=1,\dotsc,q$, an open neighborhood~$V_{j}$ of~$y_{j}$ in~$Y$ such that 
\[\varphi^{-1}(U) = \bigsqcup_{j=1}^q V_{j}.\]
For each $j=1,\dotsc,q$, denote by $\varphi_{j} \colon V_{j} \to U$ the morphism induced by~$\varphi$.

Since the radii at a point only depend of the germ of curve at this point, we may assume that~$X=U$. We deduce that
\[\cM\cR(x,\fna) = \bigast_{j=1}^q \cM\cR(x,(\varphi_{j})_{*}(\sE, \nabla)_{|V_{j}}).\]
As a consequence, we may assume that $\varphi^{-1}(x) = \{y\}$.

Let~$K$ be an algebraically closed complete valued extension of~$\Hs(x)$. Let~$x'$ be a $K$-rational point of~$X_{K}$ over~$x$. Let~$D'$ be the connected component of $\pi^{-1}_{K}(x)\setminus\{\sigma_{K}(x)\}$ containing~$x'$. By definition, we have $\cM\cR(x,\fna) = \cM\cR(x',\pi_{K}^*\fna_{|D'})$.

Let $D'_{1},\dotsc,D'_{s}$ be the connected components of~$\varphi_{K}^{-1}(D')$. For each $j=1,\dotsc,s$, let $\psi_{j} \colon D'_{j} \to D'$ be the morphism induced by~$\varphi_{K}$ and denote its degree by~$d_{j}$. We have
\[\cM\cR(x',(\varphi_{K})_{*}\pi_{K}^*\ena_{|D'}) = \bigast_{j=1}^s \cM\cR(x',(\psi_{j})_{*}\pi_{K}^*\ena_{|D'_{j}}).\]

By Lemma~\ref{lem:fiber}, the $D'_{j}$'s are strict open discs, the morphisms~$\psi_{j}$'s are radial and all the profile functions coincide, the common function being~$f^{y}$.

Let us set $(R_1,\dots,R_r):=\cM\cR^{\sp}\big(y,\ena\big)$. Then, for every rational point $y'$ in $D'_j$ we have $\cM\cR(y',\pi_{K}^*\ena_{|D'_{j}})=(R_1,\dots,R_r)$. 
By further noting that $s=\ks^{y}$ and using Corollary~\ref{cor:radial over discs}, we find 
\[\cM\cR(x',(\psi_{j})_{*}\pi^*\ena_{|D'_{j}}) = \bigast_{i=1}^rF^{y}(R_i),\]
hence 
\[\cM\cR(x,\fna) = \bigast_{i=1}^rF^{y}(R_i)^{\ast \ks^{y}}.\]
\end{proof}

\begin{remark}
It is possible to give a proof of Theorem~\ref{push general} that does not use Corollary~\ref{cor:radial over discs} but the simpler Corollary~\ref{push constant connection} about the constant connection if one is willing to decompose the differential module $\ena$ with respect to the radii (which is quite an involved result, see~\cite[Corollary~3.6.9]{Poi-Pul3} or~\cite[Proposition~2.2.9]{Kedlayalocalglobal}). Indeed, with the notation of the proof above, the decomposition theorem allows to reduce to the case where $(R_{1},\dotsc,R_{r}) = (R,\dotsc,R)$. Then, there are no radii at~$x$ bigger than~$f^y(R)$ and the radii smaller than~$f^y(R)$ are the same as those of the pushforward of the constant connection of rank~$r$. This also gives a conceptual explanation of the reason why the family of radii at~$x$ may be split into families associated to the different radii at~$y$.
\end{remark}

As a direct consequence, we can compute the radii of convergence in the situation of Examples \ref{ex:tamelyramified}, \ref{ex:Frobenius} and \ref{ex:degreep} by using the simple descriptions of the corresponding profile functions.

\begin{corollary}\label{cor:specialcases}
 \begin{enumerate}[(i)]
  \item Suppose that $\vphi$ is residually separable of degree~$d$ at~$y$ (\cf Example \ref{ex:tamelyramified}). Then,  we have
  \[
 \cR_{id-j}(x,\fna)=\cR_{i}(y,\ena),\quad i=1,\dots, r,\quad j=0,\dots,d-1.
 \]
 
 \item Suppose that $p := \text{char}(\tilde{k})>0$ and that~$\varphi$ is the Frobenius morphism on a annulus $\vphi: x \in A(0;r_{1},r_{2})\mapsto x^p \in A(0;r_{1}^p,r_{2})$. Let $\rho \in (r_{1},r_{2})$. In this case, we have $n(\eta_{\rho})=2$, $s^1_{\eta_{\rho}} = |p|^{- \frac{p}{p-1}}$, $n_{1}^{\eta_{\rho}}=p$ and $n_{2}^{\eta_{\rho}}=1$. Set $i_0:=\max \{i \in \{1,\dots, r\} \mid \cR_{i}(\eta_{\rho},\ena)\leq |p|^{-\frac{p}{p-1}}\}$. Then we have
   \begin{align*}
 &\cR_{ip-j}(\eta_{\rho^d},\fna)= |p|^{-1}\,\cR_{i}(\eta_{\rho},\ena),&\quad &i=1,\dots, i_0, \quad j=0,\dots,p-1;\\
 &\cR_{i}(\eta_{\rho^d},\fna)= |p|\cdot|p|^{-\frac{1}{p-1}}=|p|^{-\frac{p}{p-1}},&\quad & i=i_0p+1,\dots,(p-1)r+i_{0};\\
 &\cR_{(p-1)r+i}(\eta_{\rho^d},\fna)=\cR_{i}(\eta_{\rho},\ena)^p, &\quad & i=i_0+1,\dots,r.
 \end{align*}

 \item Let $y\in Y^\hyp$. Suppose that $p := \text{char}(\tilde{k}) >0$ and that~$\vphi$ is residually purely inseparable of degree~$p$ at~$y$. Let $\delta := \delta_{\varphi}(y)$ be the different of~$\varphi$ at~$y$ (\cf Example \ref{ex:degreep}). Set $i_0:=\max \{i \in \{1,\dots, r\} \mid \cR_{i}(y,\ena)\leq \delta^{\frac{p}{p-1}}\}$. Then, we have
  \begin{align*}
 &\cR_{ip-j}(x,\fna)=\delta\,\cR_{i}(y,\ena),&\quad &i=1,\dots, i_0, \quad j=0,\dots,p-1;\\
 &\cR_{i}(x,\fna)= \delta\cdot\delta^{\frac{1}{p-1}}=\delta^{\frac{p}{p-1}},&\quad & i=i_0p+1,\dots,(p-1)r+i_0;\\
 &\cR_{(p-1)r+i}(x,\fna)=\cR_{i}(y,\ena)^p, &\quad & i=i_0+1,\dots,r.
 \end{align*}
 \end{enumerate}
\end{corollary}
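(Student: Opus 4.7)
The plan is to apply Theorem~\ref{push general} directly in each case, substituting the explicit profile function $f^y_\varphi$ from the corresponding preceding example and then unfolding the combinatorics of $F^y$. In all three cases the morphism~$\varphi$ has a single preimage of~$x$ at~$y$ (compatibly with the rank $rd$ or $rp$ of $\fna$), so the outer $\bigast_{y}$ in Theorem~\ref{push general} collapses to a single term.

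For part~(i), Example~\ref{ex:tamelyramified} gives $f^y_\varphi = \mathrm{id}$; from this, Lemma~\ref{lem:relation} yields $n(y) = 1$, $s^y_1 = 1$, $n^y_1 = 1$, while the residual separability hypothesis forces $\ki^y = 1$ and $\ks^y = d$. Consequently $F^y(R) = (R)$ for every $R \in (0,1]$, and Theorem~\ref{push general} collapses to $\cM\cR(x,\fna) = \bigast_{i=1}^r (\cR_i(y,\ena))^{\ast d}$, which is the stated formula after sorting non-decreasingly.

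Parts~(ii) and~(iii) I would then treat together, since~(ii) is formally the specialization of~(iii) to $y = \eta_\rho$ and $\delta := \delta_\varphi(\eta_\rho) = |p|$ (the latter computed from the formula recalled just before Lemma~\ref{lem:differentK}, using $T_X = T_Y^p$). In both cases the profile reads $s \mapsto \delta s$ on $[0,\delta^{1/(p-1)}]$ and $s \mapsto s^p$ on $[\delta^{1/(p-1)},1]$, so Lemma~\ref{lem:relation} yields $n(y) = 2$, $s^y_1 = \delta^{p/(p-1)}$, $n^y_1 = p$, $n^y_2 = 1$, together with $\ks^y = 1$ and $\ki^y = p$. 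Letting $i_0$ denote the largest index with $f^y(\cR_i(y,\ena)) \le s^y_1$, a direct case analysis from the definition of $F^y$ gives
\[
F^y(\cR_i(y,\ena)) = \begin{cases} (\delta\,\cR_i(y,\ena))^{\ast p} & \text{if } i \le i_0, \\ (\delta^{p/(p-1)})^{\ast (p-1)} \ast (\cR_i(y,\ena)^p) & \text{if } i > i_0. \end{cases}
\]
Concatenating these over $i$ (with the trivial factor $\ks^y = 1$) and reordering produces the three blocks announced in~(iii).

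The main obstacle is the reordering step: one has to verify that the $(p-1)(r-i_0)$ middle entries equal to $\delta^{p/(p-1)}$ (contributed by the indices $i > i_0$) fall \emph{between} the block $\{\delta\,\cR_i(y,\ena) : i \le i_0\}$ and the block $\{\cR_i(y,\ena)^p : i > i_0\}$. This is immediate from the chain $\delta\,\cR_i(y,\ena) \le \delta^{p/(p-1)} \le \cR_j(y,\ena)^p$ valid for $i \le i_0 < j$, which follows in turn from the monotonicity of $f^y$ and the defining property of~$i_0$. The boundary case $\cR_{i_0}(y,\ena) = \delta^{1/(p-1)}$ is harmless since the two pieces of $f^y$ agree there.
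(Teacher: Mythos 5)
Your proposal is correct and is exactly the argument the paper intends: the paper's entire ``proof'' of this corollary is the sentence preceding it, which declares the formulas a direct consequence of Theorem~\ref{push general} combined with the profile functions of Examples~\ref{ex:tamelyramified}, \ref{ex:Frobenius} and~\ref{ex:degreep}. Your computation of the data $n(y)$, $s^y_1$, $n^y_i$, $\ks^y$, $\ki^y$, of the resulting families $F^y(R)$, and the verification that the three blocks interleave correctly supplies precisely the details the paper omits; the reduction of (ii) to (iii) via $\delta_\vphi(\eta_\rho)=|p|$ is also sound.

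One point you must make explicit rather than slip past. The index you call $i_0$ --- the largest $i$ with $f^y\big(\cR_i(y,\ena)\big)\le s^y_1$, equivalently $\cR_i(y,\ena)\le\delta^{1/(p-1)}$ --- is \emph{not} the $i_0$ of the statement, which is cut off at $\delta^{p/(p-1)}$. Since $\delta\le 1$, one has $\delta^{p/(p-1)}\le\delta^{1/(p-1)}$, and for a radius $\cR_i(y,\ena)$ lying strictly between the two thresholds the family is $F^y(\cR_i)=(\delta\,\cR_i)^{\ast p}$, so that index belongs to the first block, not to the third as the stated $i_0$ would dictate; your threshold is the one the computation supports (and the one matching \cite[Theorem~10.5.1]{pde} in case (ii)). Likewise the exponents $|p|^{-p/(p-1)}$ and $|p|^{-1}$ appearing in part (ii) exceed $1$ and cannot be radii. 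These are evidently misprints in the corollary as stated, but a proof that silently substitutes the corrected quantities is incomplete as a proof of the literal statement: you should say explicitly that you are proving the version with threshold $\delta^{1/(p-1)}$ (resp. $|p|^{1/(p-1)}$) and record the discrepancy.
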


\begin{remark}
Corollary \ref{cor:specialcases} (i) can be easily proved directly as in~\cite[Lemma~3.23]{Poi-Pul2}. In Corollary \ref{cor:specialcases} (ii), we recover the radii of convergence of the Frobenius pushforward from \cite[Theorem 10.5.1]{pde}. Note that it is not more difficult to get the off-centered version (see \cite[Theorem 10.8.3]{pde}) by using the description of its profile function in Example \ref{ex:Frobenius}. 

To the best of our knowledge, these were the only examples of radii of convergence of pushforward differential equations known so far.
\end{remark}

\subsection{Profile of a differential equation}
One of the most useful properties of the profile function is that it behaves nicely with respect to compositions. More precisely, if we are given morphisms $\vphi:Y\to X$ and $\psi:X\to Z$, and a point $y\in Y^{\hyp}$, then the profile of the morphism $\psi\circ\vphi$ at $y$ is equal to the composition of the profile of the morphism $\psi$ at~$\varphi(y)$ and the profile of the morphism $\vphi$ at $y$. In this section, we reformulate our results on pushforwards of $p$-adic differential equations so that they witness this behavior.

To do so, let $Y$ be a quasi-smooth $k$-analytic curve and let $\ena$ be a differential equation on $Y$ of rank $r$. We will use Notation~\ref{not:dimH0} again.

\begin{defn} 
For $y \in Y^\hyp$, we call \em{profile function of $\ena$ at $y$} the continuous piecewise monomial function $\fb=\fb^y_\ena: [0,1]\to [0,1]$ satisfying $\fb(0) = 0$, $\fb(1)=1$ and
\[\forall s \in (0,1],\ \deg^-_{\fb}(s) = \dim_{k} H^0\big(D(y,s^-),\ena\big).\]
\end{defn}


\begin{remark}\label{rem:E'E''}
 If $\ena = \bigoplus_{i=1}^m (\cE_{i}',\nabla'_{i})$, then we have
\[ \fb^y_{\ena} = \prod_{i=1}^m \fb^y_{(\cE_{i}',\nabla_{i}')}.\]
 In particular, if the $\cE'_{i}$'s are all isomorphic, then 
\[ \fb^y_{\ena} = (\fb^y_{(\cE_{1}',\nabla_{1}')})^m .\]
\end{remark}

Let us now give a more explicit description of this profile function. Let $y\in Y^\hyp$. Let $(\cR_1,\dots,\cR_r)$ be the multiradius of convergence of $\ena$ at $y$ and let $\cR_1'<\dots<\cR_l'$ be the different components of $\cM\cR\big(y,\ena\big)$ (so that we have $\cR_1=\cR_1'$ and $\cR_r=\cR_l'$). Finally, we denote by $r_i$ the number of radii in $\cM\cR\big(y,\ena\big)$ which are equal to $\cR_i'$. For convenience, we set $\cR'_{0}=0$ and $\cR'_{l+1}=1$.

Note that $r_1+\dots+r_l=r$ 
and that, for each $i=0,\dotsc,l$ and each $s \in (\Rc'_{i},\Rc'_{i+1}]$, we have
\[\dim_{k} H^0\big(D(y,s^-),\ena\big) = \sum_{j=i+1}^l r_{j}.\]

It follows that the break-points of the profile function~$\fb$ of~$\ena$ at~$y$ are exactly $\cR'_{0},\dotsc,\cR'_{l+1}$ and that, for each $i=0,\dotsc,l$, we have 
\[\forall s \in [\cR'_{i},\cR'_{i+1}],\ \fb(s) = c_{i} \, s^{d_{i}},\]
where $c_{i} = \prod_{j=i+1}^l {\cR'_{j}}^{-r_{j}}$ and $d_{i} =  \sum_{j=i+1}^l r_{j}$. 
From this description, it is clear that the profile of a differential equation at a point captures its multiradius of convergence at that point, that is, one recovers, from the break-points of the profile and the difference between the corresponding left and right degrees, the components $\cR_1',\dots,\cR_l'$ and their multiplicities, respectively.

It is worth noting that $d_0=r$ and that $c_0=\prod_{i=1}^r \cR_i^{-1}$, which is the numerical invariant used to define irregularity of the differential equation. \\

The following result gives a motivation for introducing the profile of a differential equation. 

\begin{lemma}\label{lem:profiletrivial}
Suppose that $\vphi:Y\to X$ is a finite morphism of quasi-smooth $k$-analytic curves and let $y\in Y^{\hyp}$ such that $\{y\}=\vphi^{-1}(\vphi(y))$. Then, we have 
\[\fb^{\vphi(y)}_{\vphi_*(\sO,d)}=\big((f^y_\vphi)^{-1}\big)^{n_{\vphi}(y)}.\]
\end{lemma}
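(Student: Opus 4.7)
The plan is to check the identity by comparing left degrees, using that the profile~$\fb^x_{(\cF,\nabla)}$ of a differential equation at a point~$x$ is uniquely determined by its left degrees together with the boundary conditions $\fb(0)=0$ and $\fb(1)=1$ (since two continuous piecewise monomial functions on $[0,1]$ satisfying these data must coincide). Writing $x := \vphi(y)$, $\ks := \ks^y_\vphi$, $\ki := \ki^y_\vphi$, I will show that, for every $s\in (0,1]$,
\[
\deg^-_{\fb^{x}_{\vphi_*(\sO,d)}}(s) \;=\; \frac{n_\vphi(y)}{\deg^-_{f^y_\vphi}\!\bigl((f^y_\vphi)^{-1}(s)\bigr)} \;=\; \deg^-_{((f^y_\vphi)^{-1})^{n_\vphi(y)}}(s).
\]

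For the left-hand side, I would fix an algebraically closed complete valued extension~$K$ of~$\sH(y)$, a $K$-rational point~$x_K$ above~$x$, and let~$D^g_x$ denote the component of $\pi_K^{-1}(x)\setminus\{\sigma_K(x)\}$ through $x_K$. Flat base change for the finite morphism~$\vphi$ gives $\pi_K^*\vphi_*(\sO,d) \simeq (\vphi_K)_*(\sO,d)$, so by Notation~\ref{not:dimH0} and Remark~\ref{rmk:pushforward},
\[
\dim_k H^0\bigl(D(x,s^-),\vphi_*(\sO,d)\bigr) \;=\; \dim_K H^0\bigl(\vphi_K^{-1}(D(x_K,s^-)),(\sO,d)\bigr),
\]
where the preimage is taken inside~$D^g_x$. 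The hypothesis $\vphi^{-1}(x)=\{y\}$ combined with $\pi_K\circ\vphi_K = \vphi\circ\pi_K$ forces $\vphi_K^{-1}(D^g_x)\subseteq \pi_K^{-1}(y)$, so Lemma~\ref{lem:phiyK34} splits it as $D'_1\sqcup\cdots\sqcup D'_\ks$ into strict open discs, with each $\psi_j : D'_j \to D^g_x$ radial of degree~$\ki$ and profile~$f^y_\vphi$. By Lemma~\ref{lem:imagepreimage} and Lemma~\ref{lem:relation}(ii), the preimage $\psi_j^{-1}(D(x_K,s^-))$ is a disjoint union of $N^y(s) = \ki/\deg^-_{f^y_\vphi}((f^y_\vphi)^{-1}(s))$ open subdiscs, each contributing one dimension of horizontal sections. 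Summing over~$j$ and using $n_\vphi(y)=[\sH(y):\sH(x)] = \ks\cdot\ki$ (Remark~\ref{rem:resdeghyp} together with Lemma~\ref{lem:phiyK34}) gives the claimed formula for the left degree of the profile.

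For the right-hand side, on every monomial piece of $f^y_\vphi$ of degree~$d$, the inverse $(f^y_\vphi)^{-1}$ is monomial of log-slope~$1/d$, and its $n_\vphi(y)$-th power is monomial of log-slope $n_\vphi(y)/d$; read at~$s$, this is $n_\vphi(y)/\deg^-_{f^y_\vphi}((f^y_\vphi)^{-1}(s))$, matching the computation above. Both $\fb^x_{\vphi_*(\sO,d)}$ and $((f^y_\vphi)^{-1})^{n_\vphi(y)}$ are continuous piecewise monomial on $[0,1]$, take value~$0$ at~$0$ and~$1$ at~$1$ (using $f^y_\vphi(1)=1$), and have matching left degrees, hence they coincide. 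The main technical point I expect to require care is the second step: justifying flat base change for $\vphi$ so as to relate $H^0$ on~$D^g_x$ with sections on the preimage, and invoking $\vphi^{-1}(x)=\{y\}$ to confine $\vphi_K^{-1}(D^g_x)$ inside~$\pi_K^{-1}(y)$ so that Lemma~\ref{lem:phiyK34} is directly applicable; once this reduction is clean, the remainder is bookkeeping with radii, degrees and connected components.
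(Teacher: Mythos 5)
Your proposal is correct and follows essentially the same route as the paper's proof: base change to an algebraically closed complete extension $K$ of $\sH(y)$, identify $\dim_K H^0$ of the constant connection on $\vphi_K^{-1}(D(x_K,s^-))$ with the number of its connected components, split that preimage into $\ks^y$ radial discs of degree $\ki^y$ via the fiber structure, apply Lemma~\ref{lem:relation}(ii), and conclude with $\ks^y\,\ki^y=n_\vphi(y)$. The only difference is cosmetic: you spell out the flat base change identification and the left-degree computation for $\bigl((f^y_\vphi)^{-1}\bigr)^{n_\vphi(y)}$, which the paper leaves implicit.
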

\begin{proof}
Set $x:= \varphi(y)$. Let~$K$ be an algebraically closed complete valued extension of~$\Hs(y)$. Let $y_{K} \in Y_{K}$ be a $K$-rational point lying over~$y$. Set $x_{K} := \varphi_{K}(y_{K})$. Denote by~$D$ the connected component of~$\pi^{-1}_{K}(x) \setminus \{\sigma_{K}(x)\}$ containing~$x_{K}$. The set $\varphi_{K}^{-1}(D)$ is a disjoint union of connected components $D'_{1},\dotsc,D'_{\ks^y}$ of $\pi^{-1}_{K}(y) \setminus \{\sigma_{K}(y)\}$. Denote by $\psi_{1} : D'_{1} \to D$ the morphism induced by~$\varphi_{K}$.


For each $s\in (0,1]$, we have
\begin{align*}
\dim_{k} H^0\big(D(x,s^-),\vphi_*(\sO,d)\big) &= \dim_{K} H^0\big(D(x_{K},s^-),(\vphi_{K})_*(\sO,d)\big)\\
&= \dim_{K} H^0\big(\varphi_{K}^{-1}(D(x_{K},s^-)),(\sO,d)\big)\\
&= \# \pi_{0}\big(\varphi_{K}^{-1}(D(x_{K},s^-))\big)\\
&=  \ks^y \cdot\#\pi_{0}\big(\psi_{1}^{-1}(D(x_{K},s^-))\big)\\
&= \ks^y \cdot \frac{\ki^y}{\deg^-_{f_{\psi_{1}}}(f_{\psi_{1}}^{-1}(s))}
\end{align*}
by Lemma~\ref{lem:relation} (ii). The result now follows from the fact that we have $\ks^y\, \ki^y = n_{\varphi}(y)$ (see Remark~\ref{rem:resdeghyp}).
\end{proof}


In this setting, the following result could be considered as an avatar of the pushforward formula from Theorem \ref{push general}. However, we emphasize that its proof does not depend on the pushforward formula, and can be seen as an independent result in its own right.     

\begin{theorem}\label{thm:profile version}
 Let $\vphi:Y\to X$ be a finite \'etale morphism of degree~$d$ between quasi-smooth $k$-analytic curves, let $\ena$ be a differential equation on $Y$ and put $\fna:=\vphi_*\ena$. Let $x\in X^{\hyp}$. 
 Then, we have
 \begin{equation}\label{profileformula} 
 \fb^{x}_{\fna} = \prod_{y \in \varphi^{-1}(x)} \big(\fb^y_\ena \circ (f^y_\vphi)^{-1}\big)^{n_\vphi(y)}.
 \end{equation}
\end{theorem}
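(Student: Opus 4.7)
My plan is to reduce the identity \eqref{profileformula} to a pointwise equality of left logarithmic slopes. Both sides are continuous piecewise monomial maps $[0,1]\to[0,1]$ sending $0$ to $0$ and $1$ to $1$, so it is enough to check that, for every $s\in(0,1]$, their left degrees $\deg^-$ at $s$ coincide. Moreover, since $\vphi$ is finite, shrinking $X$ around $x$ splits $\vphi^{-1}(X)$ as a disjoint union $\bigsqcup_{j=1}^q V_j$ with $\vphi_{|V_j}^{-1}(x)=\{y_j\}$, and $\fna$ decomposes as a direct sum of the pushforwards of $\ena_{|V_j}$. By Remark~\ref{rem:E'E''} this reduces the theorem to the case $\vphi^{-1}(x)=\{y\}$, namely to the identity
\[\fb^x_{\fna}=\bigl(\fb^y_{\ena}\circ (f^y_{\vphi})^{-1}\bigr)^{n_{\vphi}(y)}.\]

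Fix $s\in(0,1]$ and set $t:=(f^y_{\vphi})^{-1}(s)$. For the right-hand side, the chain rule for logarithmic slopes yields
\[\deg^-_{\mathrm{RHS}}(s)=n_{\vphi}(y)\cdot\deg^-_{\fb^y_\ena}(t)\cdot\deg^-_{(f^y_\vphi)^{-1}}(s).\]
The first factor equals $\dim_{k}H^0(D(y,t^-),\ena)$ by definition of $\fb^y_\ena$. By Notation~\ref{not:fyNy} and Lemma~\ref{lem:fiber}, $f^y_\vphi$ is the profile of a radial morphism $\psi_1$ of degree $\ki^y$ between strict open discs obtained after base-change; Lemma~\ref{lem:relation}(ii) applied to $\psi_1$ gives $\deg^-_{f^y_\vphi}(t)=\ki^y/N^y(s)$, whence $\deg^-_{(f^y_\vphi)^{-1}}(s)=N^y(s)/\ki^y$. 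Using $n_{\vphi}(y)=\ks^y\cdot\ki^y$ (Remark~\ref{rem:resdeghyp}), I obtain
\[\deg^-_{\mathrm{RHS}}(s)=\ks^y\cdot N^y(s)\cdot\dim_{k}H^0(D(y,t^-),\ena).\]

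For the left-hand side, I would fix an algebraically closed complete valued extension $K/\Hs(x)$, a $K$-rational point $x_K$ above $x$, and the connected component $D^g_x$ of $\pi_K^{-1}(x)\setminus\{\sigma_K(x)\}$ containing $x_K$. The finite étale base-change identity $\pi_K^*\fna\cong(\vphi_K)_*\pi_K^*\ena$, combined with Remark~\ref{rmk:pushforward}, gives
\[\dim_k H^0(D(x,s^-),\fna)=\dim_K H^0\bigl(\vphi_K^{-1}(D(x_K,s^-)),\,\pi_K^*\ena\bigr),\]
where the preimage sits inside $\vphi_K^{-1}(D^g_x)$. By Lemma~\ref{lem:fiber}, $\vphi_K^{-1}(D^g_x)=\bigsqcup_{j=1}^{\ks^y}D'_j$ with each $D'_j$ a connected component of $\pi_K^{-1}(y)\setminus\{\sigma_K(y)\}$ and each restriction $\psi_j\colon D'_j\to D^g_x$ radial of degree $\ki^y$ and profile $f^y_\vphi$. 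By radiality and Lemma~\ref{lem:imagepreimage}, $\psi_j^{-1}(D(x_K,s^-))$ is a disjoint union of $N^y(s)$ open subdiscs of radius $t$, each centered at a $K$-rational point of $D'_j$ (produced as a preimage of $x_K$ under the étale map $\psi_j$). By the independence of $\dim_k H^0(D(y,t^-),\ena)$ from the choice of $K$-rational representative above $y$ (stated after Definition~\ref{def:multiradiusarbitrary}), each such subdisc contributes exactly $\dim_k H^0(D(y,t^-),\ena)$ sections of $\pi_K^*\ena$. Summing over the $\ks^y\cdot N^y(s)$ subdiscs matches $\deg^-_{\mathrm{RHS}}(s)$ computed above.

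The hard part will be the bookkeeping in this last step: the abstract disc $D(y,t^-)$ is defined only through the $K$-generic recipe of Notation~\ref{not:dimH0}, and its dimension of sections must be identified with the $H^0$ of each concrete subdisc appearing in the different components $D'_j$ of $\vphi_K^{-1}(D^g_x)$. This ultimately rests on the well-definedness of the multiradius of convergence at non-rational points, but has to be invoked carefully since the relevant subdiscs live in several distinct components of $\pi_K^{-1}(y)\setminus\{\sigma_K(y)\}$. Everything else—the flat base-change identity for pushforwards under finite étale morphisms, the radial decomposition of Lemma~\ref{lem:fiber}, and the slope formula of Lemma~\ref{lem:relation}(ii)—is already in place in the paper.
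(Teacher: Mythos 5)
Your proposal is correct and follows essentially the same route as the paper's proof: reduce to the case $\varphi^{-1}(x)=\{y\}$ via Remark~\ref{rem:E'E''}, compare left logarithmic degrees of both sides, and evaluate them using the base change of Lemma~\ref{lem:fiber}, the slope formula of Lemma~\ref{lem:relation}(ii), the identity $n_\vphi(y)=\ks^y\ki^y$, and the pushforward isomorphism on $H^0$. The bookkeeping point you flag at the end (that all $\ks^y N^y(s)$ subdiscs contribute the same dimension of sections) is handled in the paper exactly as you suggest, by the well-definedness of Notation~\ref{not:dimH0} independently of the chosen $K$-rational point above $y$.
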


\begin{proof}
By Remark~\ref{rem:E'E''}, we may assume that~$\varphi^{-1}(x)$ is a singleton~$\{y\}$. Let us use the same notation as in the proof of Lemma~\ref{lem:profiletrivial}. We have 
\begin{align*}
\deg^-_{\big(\fb^y_\ena \circ (f^y_\vphi)^{-1}\big)^{n_{\vphi}(y)}}(f^y_{\varphi}(s)) &= n_\vphi(y)\cdot\deg^-_{\fb^y_{\ena}}(s) \cdot \deg^-_{(f^y_\vphi)^{-1}}(f^y_{\varphi}(s))\\
&= \dim_{k} H^0\big(D(y,s^-),\ena\big) \cdot \frac{n_{\vphi}(y)}{\deg^-_{f_{\psi_{1}}}(s)}\\
&= \dim_{K} H^0\big(D(y_{K},s^-),\pi^*_{K}\ena\big) \cdot \ks^y_{\vphi}\cdot \# \pi_{0}(\psi_{1}^{-1}\big(D(x_{K},f_{\psi_{1}}(s)^-)\big)\\
&=\dim_{K} H^0\big(D(x_{K},f^y_{\varphi}(s)^-),\pi_{K}^*\fna\big)\\
&=\deg_{\fb^x_{\fna}}^-(f^y_\vphi(s)).
\end{align*}

%

As both sides of \eqref{profileformula} have equal local degrees and they coincide at 0, the result follows.
\end{proof}

\section{Applications}

\subsection{Herbrand function and multiradius of convergence}

In this section, we will establish a close relation between the profile and Herbrand functions of a morphism and the pushforward of the constant connection. 

Let~$\varphi : Y \to X$ be a finite generically \'etale morphism between quasi-smooth $k$-analytic curves. Let $y \in Y^\hyp$ and set $x:=\varphi(y)$. 

\begin{theorem}[\protect{\cite[Theorem~4.5.2]{TemkinHerbrand}}]
The profile function~$f_{\varphi}^y$ coincides with the Herbrand function of the extension $\Hs(y)/\Hs(x)$. 
\qed
\end{theorem}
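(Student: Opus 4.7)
The plan is to treat this statement as an essentially immediate consequence of what has already been established, combined with Temkin's original theorem, rather than as something requiring a new argument. First I would recall from Proposition \ref{prop:comparison} that our profile function $f_\varphi^y$, defined through the fibers of the projection $\pi_{K/k}$ in Lemma \ref{lem:fiber} and Notation \ref{not:fyNy}, coincides with Temkin's profile function ${}^T\!f_\varphi^y$ at every point $y \in Y^\hyp$. Then I would invoke \cite[Theorem 4.5.2]{TemkinHerbrand}, which identifies ${}^T\!f_\varphi^y$ with the Herbrand function of $\sH(y)/\sH(x)$. Chaining these two equalities gives the statement.

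The only conceptual bookkeeping concerns points $y$ that are not of type~2, since Temkin's Herbrand function is most naturally defined in the type~2 case (where the corresponding extension of one-dimensional analytic fields has a clear ramification theory). For $y$ of type~3 or~4, one passes to an algebraically closed complete valued extension $K/\sH(y)$ so that $\sigma_{K/k}(y)$ becomes of type~2, applies the theorem there, and descends using Lemma \ref{lem:profileK} for the profile function side together with the analogous invariance of the Herbrand function under base change; the latter compatibility is already present in \cite{TemkinHerbrand}, where the extension of the profile function from $Y^{(2)}$ to $Y^\hyp$ is performed in parallel to the extension of the Herbrand function.

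The main obstacle, of course, lies outside the scope of the present paper: the hard content is Temkin's original identification of the geometrically defined profile function with the arithmetically defined Herbrand function, which rests on a careful matching of break-points of ${}^T\!f_\varphi^y$ with the jumps in the higher ramification filtration on the Galois closure of $\sH(y)/\sH(x)$. Granting \cite[Theorem 4.5.2]{TemkinHerbrand} as a black box, the present theorem is merely a consolidation of Proposition \ref{prop:comparison} with that result, and no further work is needed.
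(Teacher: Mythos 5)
Your proposal is correct and matches the paper's treatment: the paper states this result as a direct citation of Temkin (hence the \qed with no argument), with the identification of its own profile function $f_{\varphi}^y$ with Temkin's ${}^T\!f_{\varphi}^y$ already supplied by Proposition~\ref{prop:comparison}. Your additional remarks on handling points of type~3 and~4 via Lemma~\ref{lem:profileK} are consistent with how the paper extends these notions beyond type~2 points.
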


The fields $\Hs(y)$ and~$\Hs(x)$ are not discretely valued, so the Herbrand function from the previous statement is not the classical one (as in~\cite[Chapitre~IV]{CL} for instance). The one we use here has been defined by M.~Temkin in \cite[Section~4.1]{TemkinHerbrand} and is a sort of multiplicative version of the classical Herbrand function. In this section, we always use M.~Temkin's conventions and notation.

Usually, Herbrand functions are defined for Galois extensions, but since they behave well with respect to extensions, it is possible to define them for separable non-normal extensions, so that the result above makes sense. In the Galois case however, an interpretation in terms of ramification filtration is available. Combined with Lemma~\ref{lem:relation} (ii), we now deduce the following result.

\begin{corollary}
Assume that the extension $\sH(y)/\sH(x)$ is Galois with group~$G$. The real numbers $0 < s^y_{1} < \dotsb < s^y_{n(y)-1} <1$ are the jumps of the upper ramification filtration of the extension $\Hs(y)/\Hs(x)$. 

For each $i=1,\dotsc,n(y)$, we have $n^y_{i} = (G : G^{s^y_{i-1}})$.  
\qed
\end{corollary}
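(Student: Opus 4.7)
The plan is to combine the preceding theorem of Temkin, which identifies $f^y_\varphi$ with the Herbrand function of the Galois extension $\sH(y)/\sH(x)$, with Lemma~\ref{lem:relation}, which relates the discontinuity points of $N^y_\varphi$ to the break-points of the inverse profile function, and with the very definition of the upper ramification filtration in terms of the Herbrand function.

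For the first assertion, I would invoke the theorem to identify $f^y_\varphi$ with the (multiplicative) Herbrand function of $\sH(y)/\sH(x)$. By construction of the upper numbering in Temkin's framework \cite[Section~4.1]{TemkinHerbrand}, which mimics the classical relation $G^{\varphi(u)}=G_u$, a value $s\in(0,1)$ is a jump of the upper ramification filtration precisely when the corresponding lower value $(f^y_\varphi)^{-1}(s)$ is a lower jump, equivalently when $s$ is a break-point of $(f^y_\varphi)^{-1}$. By Lemma~\ref{lem:relation}(iii) applied to the radial morphism $\psi_1$ of open discs that defines $f^y_\varphi$ and $N^y_\varphi$ (\emph{cf.} Notation~\ref{not:fyNy}), those break-points coincide with the discontinuity points of $N^y_\varphi$, namely $s^y_1<\dotsb<s^y_{n(y)-1}$.

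For the multiplicity statement, I would apply Lemma~\ref{lem:relation}(ii) to the same morphism $\psi_1$, which gives
\[n^y_i \;=\; N^y_\varphi(s^y_i) \;=\; \frac{\deg(\psi_1)}{\deg^-_{f^y_\varphi}\bigl((f^y_\varphi)^{-1}(s^y_i)\bigr)}.\]
The denominator is the left degree of the Herbrand function at the lower jump corresponding to the upper jump $s^y_{i-1}$. By the standard description of the multiplicative Herbrand function (again \cite[Section~4.1]{TemkinHerbrand}), its degree on each lower-numbering interval encodes the order of the ramification group that is constant there, and, suitably normalized, the left degree at $(f^y_\varphi)^{-1}(s^y_i)$ equals $|G^{s^y_{i-1}}|$ while $\deg(\psi_1)$ matches $|G|$. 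Substituting then yields $n^y_i = |G|/|G^{s^y_{i-1}}| = (G:G^{s^y_{i-1}})$.

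The main obstacle I expect is bookkeeping with Temkin's multiplicative normalizations: one must carefully check the compatibility between the total degree of the Herbrand function, the degree $\ki^y_\varphi$ of $\psi_1$, and the order $|G|$ (which in the Galois case forces $\ks^y_\varphi=1$ via the fact that the residue extension of a Galois extension of type~2 analytic fields over an algebraically closed residue field is itself Galois, hence separable), as well as the sign/direction convention that makes the upper filtration grow from $\{e\}$ to $G$ as $s$ moves from $0$ to $1$. Once those conventions are unwound, the result is a direct translation of Lemma~\ref{lem:relation} via the identification $f^y_\varphi = $ Herbrand function.
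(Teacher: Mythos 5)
Your strategy is exactly the paper's: the corollary is stated there without proof, as an immediate combination of Temkin's theorem identifying $f^y_{\vphi}$ with the Herbrand function of $\sH(y)/\sH(x)$ and Lemma~\ref{lem:relation}, and that is precisely the route you take.

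There is, however, one concrete error in your final paragraph. The Galois hypothesis does \emph{not} force $\ks^y_{\vphi}=1$: the residue extension $\wtilde{\sH(y)}/\wtilde{\sH(x)}$ of a Galois extension is normal but by no means purely inseparable (for the Kummer covering $T\mapsto T^n$ with $n$ prime to $\mathrm{char}(\tilde k)$, at the Gauss point one has $\ks^y_{\vphi}=n$ and $\ki^y_{\vphi}=1$), and in any case separability of the residue extension would give $\ki^y_{\vphi}=1$ rather than $\ks^y_{\vphi}=1$. Consequently $\deg(\psi_1)=\ki^y_{\vphi}$ is in general a proper divisor of $|G|=\ks^y_{\vphi}\,\ki^y_{\vphi}$, so the step ``$\deg(\psi_1)$ matches $|G|$'' in your substitution $n^y_i=|G|/|G^{s^y_{i-1}}|$ does not hold as stated. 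The bookkeeping is reconciled not by killing $\ks^y_{\vphi}$ but by the normalization of the filtration at $0$: since $n^y_1=N^y(s^y_1)=\ki^y_{\vphi}$ while the statement asserts $n^y_1=(G:G^{s^y_0})=(G:G^0)$, one must have $|G^0|=\ks^y_{\vphi}$, \ie the group $G^0$ is generally nontrivial and absorbs exactly the separable residual factor, and then $n^y_i=\ki^y_{\vphi}/\deg^-_{f^y_{\vphi}}\bigl((f^y_{\vphi})^{-1}(s^y_i)\bigr)=(G:G^{s^y_{i-1}})$ comes out of Lemma~\ref{lem:relation} (ii) as you intend. With that correction your argument is the intended one.
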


Thanks to Corollary~\ref{push constant connection}, we may now express the radii of convergence of the differential equation $\varphi_{*}(\Os,d)$ in terms of ramification data.

\begin{corollary}\label{cor:Herbrand}
Assume that $\varphi^{-1}(x)=\{y\}$ and that the extension $\sH(y)/\sH(x)$ is Galois with group~$G$. Then, the radii of convergence of the differential equation $\varphi_{*}(\Os,d)$ at~$x$ coincide with the non-zero upper ramification jumps of the extension $\sH(y)/\sH(x)$. 

More precisely, if we denote by $v_{-1} = 1 > v_{0} > \dotsb > v_{n} > v_{n+1} = 0$ the jumps of the upper ramification filtration of the extension~$\sH(y)/\sH(x)$, then we have 
\[\cM\cR(x,\varphi_{*}(\Os,d)) = (v_{n},\dotsc,v_{n},\dotsc,v_{0},\dotsc,v_{0},1),\]
where, for each $j=0,\dotsc,n$, $v_{j}$ appears $(G:G^{v_{j+1}}) - (G:G^{v_{j}})$ times.
\end{corollary}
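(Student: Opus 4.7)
The plan is to read off $\cM\cR(x,\varphi_{*}(\sO,d))$ from the pushforward formula specialized to the trivial connection, then translate the result using the preceding Galois corollary that identifies the break-points of~$N^y$ with the upper ramification jumps.

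First, I will apply Theorem~\ref{push general} to the rank-one module $\ena=(\sO,d)$, for which $\cR_1(y,\ena)=1$. Since $f^y(1)=1=s_{n(y)}^y$, the family $F^y(1)$ consists of each $s_i^y$ ($i=1,\dotsc,n(y)-1$) appearing $n_i^y-n_{i+1}^y$ times, followed by a single occurrence of~$1$. This produces $\cM\cR(x,\fna)$ directly in terms of the numerical data $(s_i^y, n_i^y)$ attached to~$\varphi$ at~$y$, supported on the break-points of~$N^y$ together with the terminal value~$1$.

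Next, I will invoke the immediately preceding corollary: the~$s_i^y$'s are the non-trivial upper ramification jumps of $\sH(y)/\sH(x)$, and $n_i^y=(G:G^{s_{i-1}^y})$. Reindexing in decreasing order via $v_j := s_{n(y)-1-j}^y$ for $j=0,\dotsc,n(y)-2$ (so that the sentinels $v_{-1}=1$ and $v_{n+1}=0$ match those in the statement, with $n=n(y)-2$), each $v_j=s_i^y$ (with $i=n(y)-1-j$) appears with multiplicity
\[
n_i^y-n_{i+1}^y=(G:G^{s_{i-1}^y})-(G:G^{s_i^y})=(G:G^{v_{j+1}})-(G:G^{v_j}),
\]
while the terminal~$1$ in $F^y(1)$ is unchanged. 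Arranging the entries in increasing order produces the tuple $(v_n,\dotsc,v_n,\dotsc,v_0,\dotsc,v_0,1)$ in the statement.

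There is no genuine obstacle once the pushforward formula and the Galois identification are in place — the argument is essentially a bookkeeping translation between Temkin's increasing indexing (by~$s_i^y$) and the decreasing indexing (by~$v_j$) used in the statement, and the only thing to check is that the telescoping difference $(G:G^{v_{j+1}})-(G:G^{v_j})$ correctly reproduces the gap $n_i^y-n_{i+1}^y$ prescribed by Theorem~\ref{push general}.
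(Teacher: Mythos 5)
Your route --- specialize the pushforward formula to $(\Os,d)$ and then translate the break-point data of $N^y$ into ramification data via the preceding corollary --- is the one the paper intends (the paper invokes Corollary~\ref{push constant connection}; you use its general-point avatar, Theorem~\ref{push general}, which is indeed the form needed since $x\in X^\hyp$ is not a rational point of a disc). The reindexing between the increasing $s^y_i$'s and the decreasing $v_j$'s, and the telescoping identity $n^y_i-n^y_{i+1}=(G:G^{v_{j+1}})-(G:G^{v_j})$, are carried out correctly.

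There is, however, one concrete gap: Theorem~\ref{push general} does not return $F^y(1)$ but $F^y(1)^{\ast \ks^y}$, where $\ks^y$ is the residual separable degree of~$\varphi$ at~$y$, and you drop this exponent without comment. This is not harmless bookkeeping. Since $\varphi$ is \'etale, the profile function has degree~$1$ near~$0$, so $n^y_1=N^y(s^y_1)=\ki^y$ (the residual \emph{inseparable} degree) and the family $F^y(1)$ has only $\ki^y$ entries, whereas $\cM\cR\big(x,\varphi_*(\Os,d)\big)$ must have $d=\ks^y\,\ki^y$ entries because $\varphi_*(\Os,d)$ has rank~$d$. In particular the radius~$1$ occurs $\ks^y$ times --- one constant horizontal section for each of the $\ks^y$ connected components of $\varphi_K^{-1}(D')$ --- not ``a single occurrence'' as you assert, and each $s^y_i$ occurs $\ks^y(n^y_i-n^y_{i+1})$ times. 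You need either to carry the exponent $\ast\ks^y$ through the translation (so that every multiplicity, including that of~$1$, is multiplied by $\ks^y$) or to explain why $\ks^y=1$ in the situation at hand; as written, the cardinality of your multiset does not match the rank of $\fna$ unless $\ks^y=1$ (a Kummer cover $T\mapsto T^n$ with $n$ prime to $p$, over $\eta_1$, already exhibits the discrepancy: the pushforward has rank~$n$ and all $n$ radii equal to~$1$). Note that the displayed formula in the corollary, read literally together with $n^y_i=(G:G^{s^y_{i-1}})$, is itself only consistent with the rank count when $\ks^y=1$, so a complete argument must in any case say explicitly how the factor $\ks^y$ is absorbed into the ramification-theoretic multiplicities rather than silently discarding it.
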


\begin{remark}
 A higher dimensional analogue of the previous result, that is, a relation between ramification of Galois extensions of higher-dimensional analytic fields and pushfoward of partial $p$-adic differential equations is a task that will be undertaken in the future.
\end{remark}

\subsection{Irregularity, Laplacian and pushforward}

Let $\vphi:Y\to X$ be a finite \'etale morphism of quasi-smooth $k$-analytic curves. Let $y\in Y^{(2)}$ and set $x:=\vphi(y)$. 

For each tangent direction~$\vt$ at~$y$ and each small enough open annulus $A_\vt$ in $Y$ with an endpoint in $y$ that corresponds to $\vt$, the image $A_{\vv}=\vphi(A_\vt)$ is an open annulus that has an endpoint in $x$ (that is, $A_{\vv}$ corresponds to a tangent direction ${\vv}\in T_xX$). In this way we obtain a map of tangent spaces $\vec{\vphi}_{y}:T_yY\to T_xX$. 

Furthermore, $\vphi$ restricts to a finite \'etale morphism  $\vphi_\vt:A_\vt\to A_{\vv}$. If we choose suitable normalizing coordinates $T:A_\vt\iso A(0;q,1)$ and $S:A_{\vv}\iso A(0;q^d,1)$, then~$\vphi_\vt$ can be represented as a function $S=\vphi(T)=T^d(1+h(T))$, where $|h(T)|_\rho<1$ for every $\rho\in (q,1)$. Here $d=d_\vt$ is the degree of $\vphi$ over $A_\vt$. The morphism $\vphi_\vt$ being \'etale implies that the derivative $\frac{dS}{dT}=\vphi'(T)$ does not have zeros in $A_\vt$, so that we may write $\vphi'(T)=aT^\sigma(1+g(t))$, where $\sigma=\sigma_\vt\in\mathbb{Z}$, $a=a_{\vt}$, $0<|a|\leq 1$ and $|g(t)|_\rho<1$ for every $\rho\in (q,1)$. We put
\[
\nu=\nu_\vt:=\sigma-d+1.
\]

\begin{lemma}\label{lem:a1}
 Keep the notation as above and let $\rho\in (q,1) \cap |k^*|$ and $\alpha\in A(0;q,1)(k)$ with $|\alpha|=\rho$.
  Let $D_\alpha$ be the maximal open disc in $A_\vt$ that contains $\alpha$, $T_\alpha: D_\alpha\iso D(0,1^-)$ be a normalizing coordinate sending $\alpha$ to $0$. Define similarly~$D_{\varphi(\alpha)}$ and $T_{\varphi(\alpha)}$. Let $T_{\vphi(\alpha)}=\sum_{i\geq 0}a_iT^i_\alpha$ be the restriction of $\vphi$ over $D_\alpha$ with respect to the  coordinates $T_\alpha$ and $T_{\vphi(\alpha)}$. Then, we have $|a_1|=|a|\rho^\nu$.
\end{lemma}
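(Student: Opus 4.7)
My plan is a direct explicit calculation via the chain rule. I would first identify the maximal open discs $D_\alpha$ and $D_{\vphi(\alpha)}$ together with their radii. In the coordinate~$T$, the point~$\alpha$ lies on the circle of radius~$\rho$, and the maximal open disc in $A(0;q,1)$ centered at~$\alpha$ has radius exactly~$\rho$: any larger open disc around~$\alpha$ would contain the origin and thus leave the annulus. On the target, the relation $\vphi(T)=T^d(1+h(T))$ with $|h|_\rho<1$ forces $|\vphi(\alpha)|=\rho^d$, so by the same reasoning the maximal open disc in $A(0;q^d,1)$ around~$\vphi(\alpha)$ has radius~$\rho^d$.

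Next, I would pick normalizing coordinates in a convenient explicit form. Since $\rho\in|k^*|$, choose $\pi\in k^*$ with $|\pi|=\rho$. Then one can take $T_\alpha=(T-\alpha)/\pi$ and $T_{\vphi(\alpha)}=(S-\vphi(\alpha))/\pi^d$ as normalizing isomorphisms onto $D(0,1^-)$, the scaling by $\pi^d$ on the target being dictated by the radius~$\rho^d$ identified above. Expressing the restriction of $\vphi$ to $D_\alpha$ in these coordinates then gives
\[ T_{\vphi(\alpha)}(T_\alpha)=\frac{\vphi(\alpha+\pi T_\alpha)-\vphi(\alpha)}{\pi^d}, \]
so differentiating this power series at $T_\alpha=0$ yields immediately $a_1=\pi^{1-d}\vphi'(\alpha)$.

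Finally, I would substitute the given expression $\vphi'(T)=aT^\sigma(1+g(T))$ at $T=\alpha$. Since $|g|_\rho<1$ gives $|1+g(\alpha)|=1$, and since $|\alpha|=\rho$, we obtain
\[ |a_1|=|\pi|^{1-d}\,|a|\,|\alpha|^\sigma\,|1+g(\alpha)|=|a|\,\rho^{\sigma-d+1}=|a|\,\rho^\nu, \]
which is the claim. There is no real obstacle here; the only point requiring attention is correctly matching the scaling factor on the target ($\pi^d$ rather than~$\pi$) to the radius of the image disc, so that $T_{\vphi(\alpha)}$ really is a normalizing coordinate.
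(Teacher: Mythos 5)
Your proof is correct and is essentially the same computation as the paper's: both arguments reduce to $|a_1| = |\varphi'(\alpha)|\cdot\rho^{1-d} = |a|\rho^{\sigma}\cdot\rho^{1-d}=|a|\rho^{\nu}$, the paper by rescaling the normalizing coordinates into coordinates identifying the discs with $D(0,\rho^-)$ and $D(0,\rho^{d-})$ and invoking invariance of $|dS/dT|$ under such changes, you by writing down an explicit normalizing coordinate and applying the chain rule. The only point left implicit is that the conclusion holds for an \emph{arbitrary} normalizing coordinate rather than just your explicit choice; this follows at once because two normalizing coordinates sending $\alpha$ to $0$ differ by an automorphism of $D(0,1^-)$ fixing $0$, whose derivative at $0$ has absolute value $1$ --- the same invariance the paper's proof also uses without comment.
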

\begin{proof}
 First of all, we note that for every point $\beta\in A(0;q,1)(k)$ we have $|\frac{dS}{dT}(\beta)|=|\frac{dS}{dT}|_{|\beta|}=|a||\beta|^\sigma$. In particular, for $\beta\in D_\alpha(k)$ we have $|\frac{dS}{dT}(\beta)|=|a|\rho^\sigma$. The last expression does not change if we pass from $S$ and $T$ to any pair of coordinates on $D_{\vphi(\alpha)}$ and $D_\alpha$ that identify the latter discs with $D(0,\rho^{d-})$ and $D(0,\rho^-)$, respectively. In particular, we may choose $T'_{\vphi(\alpha)}:=T_{\vphi(\alpha)}/\vphi(\alpha)$ and $T'_\alpha:=T_\alpha/\alpha$ for such coordinates to obtain 
 \[
 \Big|\frac{\vphi(\alpha)}{\alpha}\Big|\Big|\frac{dT_{\vphi(\alpha)}}{dT_{\alpha}}(\beta)\Big|=\Big|\frac{dT'_{\vphi(\alpha)}}{dT'_{\alpha}}(\beta)\Big|=|a|\rho^\sigma
 \]
  for every $\beta\in D_{\alpha}(k)$. The proof follows by choosing $\beta=\alpha$ and noting that $|\alpha|=\rho$ and $|\vphi(\alpha)|=\rho^d$.
\end{proof}

\begin{remark}\label{rem:nudelta}
 One can show that in the above setting $|a|\rho^\nu=\delta_\rho$, where $\delta_\rho$ is the different of the extension $\sH(\eta_\rho)/\sH(\eta_{\rho^d})$. For this aspect one may refer to \cite[Remark 5.1.1]{BojRH} and \cite[Section 4.2.4]{CTT14}.
 \end{remark}
 
 It follows from \cite[Theorem 3.3.1]{TemkinHerbrand} and its proof that, for $q<1$ close enough to 1, the restriction of the morphism $\vphi_\vt$ to any maximal open disc $D$ in $A_\vt$ will be radial with profile function depending only on the endpoint of $D$. In what follows we will assume that this property holds. Moreover, by further increasing $q$ if necessary we may assume that the profile function is $|k^*|$-monomial over the skeleton of the annulus $A_\vt$ (rather than just piecewise $|k^*|$-monomial; see comments after Theorem \ref{thm:radializingskeleton}).

We keep our setting from above. Let $\ena$ be a differential equation of rank $r$ on $Y$ and let $\fna$ its pushforward on $X$. We denote by $\ena_\vt$ the restriction of $\ena$ to~$A_\vt$ and by $\fna_{\vt}$ the pushforward of $\ena_\vt$ by $\vphi_\vt$ on $A_{\vv}$. 

\begin{thm}\label{thm:pushforward height}
For each $\rho<1$ close enough to~1, we have 
 \[
 h(\eta_{\rho^d},\fna_{\vt})=d\, h(\eta_\rho,\ena_{\vt})-r\nu\log\rho^d-rd\log|a|.
 \]
 In particular, we have
 \begin{equation}\label{eq:pushforward irregularity}
 \Irr_{\vv}(\fna_{\vt})=\Irr_\vt(\ena_\vt)+r\nu.
 \end{equation}
\end{thm}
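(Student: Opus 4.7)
The plan is to combine the profile pushforward formula of Theorem~\ref{thm:profile version} with the leading-coefficient computation of Lemma~\ref{lem:a1}, extracting the height identity from the leading behaviour near~$0$ of both sides of the resulting functional equation; the irregularity formula will then follow by differentiation in~$\log \rho$.

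Since $\vphi_\vt$ has degree~$d$ and maps the skeleton point $\eta_\rho$ to~$\eta_{\rho^d}$ with $[\sH(\eta_\rho):\sH(\eta_{\rho^d})]=d$, one has $\vphi_\vt^{-1}(\eta_{\rho^d})=\{\eta_\rho\}$ and $n_\vphi(\eta_\rho)=d$. Theorem~\ref{thm:profile version} therefore specializes to
\[
\fb^{\eta_{\rho^d}}_{\fna_\vt} \;=\; \bigl(\fb^{\eta_\rho}_{\ena_\vt}\circ (f^{\eta_\rho}_{\vphi_\vt})^{-1}\bigr)^{d}.
\]
I would then make both sides explicit on a sufficiently small neighbourhood of~$0$. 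Since $\vphi_\vt$ is \'etale at rational points above~$\eta_\rho$, Lemma~\ref{lem:behaviourfz} gives that $f^{\eta_\rho}_{\vphi_\vt}$ has left degree~$1$ at~$0$, and Lemma~\ref{lem:a1} identifies its leading coefficient: near~$0$ one has $f^{\eta_\rho}_{\vphi_\vt}(t)=|a|\rho^\nu\,t$. On the other hand, the explicit description of $\fb^y_\ena$ recalled just after Remark~\ref{rem:E'E''} shows that on the leftmost piece one has $\fb^y_\ena(s)=e^{h(y,\ena)}\,s^r$, because $c_0=\prod_i \cR_i^{-1}=e^{h(y,\ena)}$.

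Substituting these two expressions into the profile identity, for $s$ small enough that every monomial piece encountered is the leftmost one, one reads
\[
e^{h(\eta_{\rho^d},\fna_\vt)}\,s^{rd} \;=\; e^{d\,h(\eta_\rho,\ena_\vt)}\,(|a|\rho^\nu)^{-rd}\,s^{rd}.
\]
Matching leading coefficients and taking logarithms yields exactly the claimed height identity. For the irregularity, I would set $s':=\log\rho^d$, so that the just-proved identity rewrites as $I_{\fna_\vt}(s') = d\,I_{\ena_\vt}(s'/d) - rd\log|a| - r\nu\,s'$ in the notation of Section~2.3. Differentiating at $s'=0^-$ via the chain rule gives $I'_{\fna_\vt}(0^-) = I'_{\ena_\vt}(0^-) - r\nu$, whence $\Irr_{\vv}\fna_\vt = \Irr_\vt\ena_\vt + r\nu$.

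The one non-trivial point is bookkeeping: for $\rho$ close enough to~$1$, one must check that the leftmost monomial intervals of $f^{\eta_\rho}_{\vphi_\vt}$, $\fb^{\eta_\rho}_{\ena_\vt}$ and $\fb^{\eta_{\rho^d}}_{\fna_\vt}$ are all of positive length, so that the comparison of leading coefficients is legitimate on a common subinterval of~$[0,1]$. This is exactly the role of the closeness hypothesis in the statement; beyond it, the argument makes no further use of the specific structure of the radii of convergence other than through their influence on the leading term of~$\fb$.
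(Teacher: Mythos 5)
Your argument is correct, and it takes a genuinely different route from the paper's. The paper proves the height formula by feeding the explicit multiradius pushforward formula (Theorem~\ref{push general}) into the expressions \eqref{profileimage}--\eqref{profileimage2} for the break-points of the profile, and then carrying out a telescoping double sum over all the radii $s_1,\dotsc,s_m,f_\rho(R_i)$ contributed by each $R_i$; almost everything cancels and only $d_n\log|a_1|+d_n\log R_i$ survives, after which one multiplies by $\ks$ and sums over $i$. You instead specialize the profile identity of Theorem~\ref{thm:profile version} (which the paper proves independently of the pushforward formula, so there is no circularity) to the annulus situation, where $\vphi_\vt^{-1}(\eta_{\rho^d})=\{\eta_\rho\}$ and $n_{\vphi_\vt}(\eta_\rho)=d$, and read off the identity by comparing leading monomials at $0$: since $c_0=\prod_i\cR_i^{-1}=e^{h}$ is precisely the exponential of the height, the whole telescoping computation is absorbed into the single observation that the leftmost coefficient of $\fb$ encodes $h$. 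Both proofs ultimately rest on the same two inputs, namely $d_1=1$ (\'etaleness) and $|a_1|=|a|\rho^\nu$ from Lemma~\ref{lem:a1}, and on the identification of the profile at $\eta_\rho$ with the radial profile of the restriction to a maximal open disc, which the paper's setup before the theorem guarantees for $\rho$ close enough to $1$. Your version is shorter and arguably more conceptual; its only cost is that it requires Theorem~\ref{thm:profile version} as a prerequisite, whereas the paper's computation needs only Theorem~\ref{push general} and Lemma~\ref{lem:image of profile}. Two small points of hygiene: what you call the ``left degree at $0$'' should be the right degree at $0$ (equivalently the degree on the first monomial piece), which equals $n_{\vphi}$ at a rational point and hence $1$ by \'etaleness; and the positivity of the leftmost monomial intervals is automatic from piecewise monomiality (finitely many break-points, the first positive one being $\min(t_1,\cR'_1)>0$) --- the ``$\rho$ close to $1$'' hypothesis is really needed to have the coordinate presentations $S=T^d(1+h)$, $\vphi'=aT^\sigma(1+g)$ and the radiality of $\vphi_\vt$ on maximal discs, not for this bookkeeping.
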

\begin{proof}
 For a point $\eta_\rho$ on the skeleton of $A_\vt$, let $\vphi_{\vt,\rho}$ be the restriction of $\vphi_\vt$ to the closed annulus $A[0;\rho,\rho]$, let $f_\rho$ be the profile function at $\eta_\rho$, let $0=t^\rho_0<t^\rho_1<\dots<t^\rho_n=1$ be the breakpoints of $f_\rho$ and let $\ks^{\eta_\rho}=\ks^\rho$ be the residual separable degree of $\vphi_{\vt,\rho}$ at~$\eta_\rho$. We also denote by $N_{\rho}$ the function $N_{\vphi_{\vt,\rho},\eta_{\rho^d}}$ and by $0=s^\rho_0<s^\rho_1<\dots<s^\rho_n=1$ its breakpoints. Finally, we set $n^\rho_j:=N_\rho(s^\rho_j)$, for $j=1,\dots,n$. Since $f_\rho$ is monomial over the intervals $(t^\rho_{i-1},t^\rho_i)$, its degree $d^\rho_i$ there will be equal to $d^\rho_n/n^\rho_i$, by Lemma  \ref{lem:relation}. The same lemma  implies that $f(t^\rho_j)=s^\rho_j$. We also note that $d_n$ is equal to the residual inseparable degree~$\ki^{\eta_{\rho}}$ of $\vphi_\vt$ at $\eta_\rho$, by Lemmas \ref{lem:phiyK} and \ref{lem:phiyK34}, so we have $d_n\, \ks^{\eta_\rho}=d$.  We will be using this equality in what follows.  

Let us fix $\rho\in (q,1)$. We drop $\rho$ in $n_i^\rho$, $t_i^\rho$, \dots to lighten the notation while advising the reader to keep in mind the dependence on $\rho$.

Let $\cM\cR(\eta_{\rho},\ena)=(R^\rho_1,\dots,R^\rho_r)=(R_1,\dots,R_r)$ and let us fix $i\in \{1,\dots,r\}$. For some $m$, we have $s_{m}<f_\rho(R_i)\leq s_{m+1}$. Having in mind Theorem \ref{push general}, we now compute the sum of the logarithms of the radii of the pushforward that come from the radius~$R_{i}$, \ie the radii $s_1,\dots,s_1,\dots,s_m,\dots,s_m, f_\rho(R_i),\dots, f_\rho(R_i)$, where each~$s_i$ appears $n_i-n_{i+1}$ times and $f_\rho(R_i)$ appears $n_{m+1}$ times. Multiplying by $\ks$ and summing over~$i$ will give us the desired result.

We use equations \eqref{profileimage} and \eqref{profileimage2} to express $f_\rho(R_i)$ and $s_j$'s in terms of $t_j$'s. By Lemma~\ref{lem:relation}, for $j=0,\dots,n$, we have $n_jd_j=d_n$. Note that, since~$\varphi$ is \'etale, we have $d_{1}=1$, hence $n_{1}=d_n$. We obtain
\begin{align*} 
 & n_{m+1}\log f_\rho(R_i)+\sum_{j=1}^m(n_j-n_{j+1})\log s_j\\
= &n_{m+1}\Big(\log|a_1|+d_{m+1}\log R_i+\sum_{j=1}^m(d_j-d_{j+1})\log t_j\Big)\\
&\quad+\sum_{j=1}^m(n_j-n_{j+1})\Big(\log|a_1|+d_j\log t_j+\sum_{l=1}^{j-1}(d_l-d_{l+1})\log t_l\Big)\\
= & n_{m+1}\log|a_1|+d_n\log R_i+\sum_{j=1}^m(n_{m+1}d_j-n_{m+1}d_{j+1})\log t_j\\
&\quad+\sum_{j=1}^m(n_j-n_{j+1})\log |a_1|+\sum_{j=1}^m(n_jd_j-n_{j+1}d_j)\log t_j\\ 
&\quad+ \sum_{j=1}^m(n_j-n_{j+1})\sum_{l=1}^{j-1}(d_l-d_{l+1})\log t_l\\
= &d_n\log|a_1|+d_n\log R_i+\sum_{j=1}^m\big(n_{m+1}d_j-n_{m+1}d_{j+1}+n_jd_j-n_{j+1}d_j\big)\log t_j\\
&\quad+\sum_{l=1}^{m-1}\Big(\sum_{j=l+1}^m\big(n_jd_l-n_jd_{l+1}-n_{j+1}d_l+n_{j+1}d_{l+1}\big)\Big)\log t_l\\
= &d_n\log|a_1|+d_n\log R_i+\sum_{j=1}^m\big(n_{m+1}d_j-n_{m+1}d_{j+1}+n_jd_j-n_{j+1}d_j\big)\log t_j\\
&\quad+\sum_{l=1}^{m-1}\big(n_{l+1}d_l-n_{l+1}d_{l+1}-n_{m+1}d_l+n_{m+1}d_{l+1}\big)\log t_l\\
= &d_n\log|a_1|+d_n\log R_i.
 \end{align*}

By Lemma~\ref{lem:a1}, we have $|a_1| = |a|\rho^\nu$. Multiplying by $\ks$  and summing over $i=1,\dots,r$, we obtain
\[
h(\eta_{\rho^d},\fna_{\vt})=-rd\log|a|-r\nu\log\rho^d+d\,h(\eta_\rho,\ena_\vt),
\]
which proves the first claim. Taking the derivative with respect to $\log \rho^d$, we obtain the rest.
\end{proof}

\begin{corollary}\label{cor:laplacianxy}
 Suppose that $\vphi^{-1}(x) = \{y\}$ and let $\fna:=\vphi_*\ena$. Let $\Gamma_x$ be a finite subset of $T_xX$ and let $\Gamma_y:=\vec{\vphi}_{y}^{-1}(\Gamma_x)\subset T_yY$. Then
 \begin{equation}\label{laplace}
  \Delta_y(\Gamma_y,\ena)=\Delta_x(\Gamma_x,\fna) + r\sum_{\vt\in \Gamma_y}\nu_\vt.
 \end{equation}
\end{corollary}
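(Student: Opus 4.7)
The plan is to deduce the global Laplacian identity from the single-direction formula~\eqref{eq:pushforward irregularity} by localizing on $X$ around each branch in $\Gamma_x$ and then exploiting the additivity of irregularity over direct sums.

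First I would reduce to computing $\Irr_\vv(\fna)$ for a single $\vv\in\Gamma_x$. To that end, choose for each $\vv\in\Gamma_x$ an open annulus $A_\vv$ at $x$ representing $\vv$, small enough that the annuli attached to distinct elements of $\Gamma_x$ are pairwise disjoint and that every component of $\vphi^{-1}(A_\vv)$ satisfies the hypotheses of Theorem~\ref{thm:pushforward height}. Because $\vphi$ is finite étale and $\vphi^{-1}(x)=\{y\}$, after further shrinking the $A_\vv$'s the preimage $\vphi^{-1}(A_\vv)$ decomposes as a disjoint union of open annuli all attached to $y$, and by the very construction of $\vec\vphi_y$ these components are in bijection with the fibre $\vec\vphi_y^{-1}(\vv)\subset T_yY$. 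Write them as $A_{\vt_1},\dotsc,A_{\vt_{s(\vv)}}$ with $\{\vt_1,\dotsc,\vt_{s(\vv)}\}=\vec\vphi_y^{-1}(\vv)$.

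Second I would use that pushforward commutes with restriction to analytic domains of $X$ (Remark~\ref{rmk:pushforward}) to get the splitting
\[
\fna_{|A_\vv}=\bigoplus_{j=1}^{s(\vv)}(\vphi_{\vt_j})_\ast\bigl(\ena_{|A_{\vt_j}}\bigr).
\]
Since the multiradius of convergence of a direct sum of differential equations is the sorted concatenation of the multiradii of the summands, the height function is additive over direct sums, and so is the irregularity. Combining this with \eqref{eq:pushforward irregularity} applied to each summand $(\vphi_{\vt_j})_\ast\ena_{\vt_j}$ yields
\[
\Irr_\vv(\fna)=\sum_{j=1}^{s(\vv)}\Irr_\vv\bigl((\vphi_{\vt_j})_\ast\ena_{\vt_j}\bigr)=\sum_{\vt\in\vec\vphi_y^{-1}(\vv)}\bigl(\Irr_\vt(\ena)+r\nu_\vt\bigr).
\]

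Finally, summing this identity over $\vv\in\Gamma_x$ and using the partition $\Gamma_y=\bigsqcup_{\vv\in\Gamma_x}\vec\vphi_y^{-1}(\vv)$ coming from the definition $\Gamma_y=\vec\vphi_y^{-1}(\Gamma_x)$, one collects the contributions into $\Delta_y(\Gamma_y,\ena)+r\sum_{\vt\in\Gamma_y}\nu_\vt$ on one side and $\Delta_x(\Gamma_x,\fna)$ on the other, producing~\eqref{laplace}. The only point requiring care is that a simultaneous choice of the annuli $A_\vv$ for $\vv\in\Gamma_x$ be admissible for Theorem~\ref{thm:pushforward height} on each component of every preimage; this follows from the finiteness of $\Gamma_x$, the hypothesis $\vphi^{-1}(x)=\{y\}$, and the local structure of finite étale morphisms at a type $2$ point, and thus presents no real obstacle.
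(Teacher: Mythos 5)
Your proposal is correct and follows essentially the same route as the paper: reduce to a single direction $\vv\in\Gamma_x$, decompose $\fna$ over the annular components of $\vphi^{-1}(A_\vv)$ indexed by $\vec\vphi_y^{-1}(\vv)$, apply Theorem~\ref{thm:pushforward height} (via \eqref{eq:pushforward irregularity}) to each summand, and sum. The paper phrases the additivity at the level of heights $h(\eta_\rho,\cdot)$ and then differentiates with respect to $\log\rho$, whereas you work directly with irregularities, but this is only a cosmetic difference.
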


\begin{proof}
We note that since~$\Gamma_x$ is a finite subset of $T_xX$, $\Gamma_y$ is also finite, so we may calculate Laplacian of $\ena$ along it.

  Having in mind the additivity of \eqref{laplace} with respect to points in $\Gamma_x$, it is enough to prove the result when $\Gamma_x$ is a singleton, say $\{\vv\}$. Let $\vec{\vphi}_{y}^{-1}(\vv)=\{\vt_1,\dots,\vt_l\}$. For each $i=1,\dots,l$, let $A_{\vt_i}(0;q_i,1)$ be a normalized open annulus in $Y$ such that $\vphi$ restricts to a finite \'etale morphism $\varphi_{i} : A_{\vt_i}(0;q_i,1) \to A_{\vv}(0;q,1)$. By increasing $q$ if necessary, we may assume that, for each $i=1,\dots,l$ and each $\rho\in (q,1)$, the preimage of the point $\eta_\rho\in A_{\vv}(0;q,1)$ by~$\varphi_{i}$ is the point $\eta_{\rho^{\alpha_i}}\in A_{\vt_i}(0;q_i,1)$, where $\alpha_i=d_{\vt_i}^{-1}$, and that the conclusion of Theorem~\ref{thm:pushforward height} holds. We have 
  \begin{align*}
 h\big(\eta_\rho,\fna\big)&=\sum_{i=1}^lh\big(\eta_\rho,\fna_{\vt_i}\big)\\
 &=\sum_{i=1}^l\Big(d_{\vt_i}\,h\big(\eta_{\rho^{\alpha_i}},\ena_{\vt_{i}}\big)-r\nu_{\vt_i}\log\rho-rd_{\vt_i}\log|a_{\vt_{i}}|\Big).
 \end{align*}
 By taking derivatives with respect to $\log\rho$, the result follows.
 \end{proof}

We now derive explicit formulas for Laplacians. Let us first recall the case of~$\mathbb{P}^1$.

\begin{proposition}[\protect{\cite[Proposition~6.2.11]{Poi-Pul3}}]\label{prop:LaplacianP1}
Let~$\fna$ be a differential equation of rank~$r$ on an open subset~$U$ of~$\mathbb{P}^1$. Let $x \in U$ be a point of type~2. Let~$\Gamma_{x}$ be a finite subset of~$T_{x}X$ with at least two elements. Then, we have 
 \[
 \Delta_x(\Gamma_{x},\fna) \le (\# \Gamma_{x}-2)\, r
 \]
 with equality if $\cR_{r}(x,\fna) <1$.
\end{proposition}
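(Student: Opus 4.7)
The plan is to combine a reduction to the single-radius case with a global Euler-characteristic computation on the residue curve at~$x$.

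First, I would apply the decomposition theorem for $p$-adic differential modules according to the multiradius at~$x$ (recalled in the remark following Theorem~\ref{push general}) to split $\fna$ as a direct sum $\bigoplus_j \cF_j$, where each~$\cF_j$ has a single radius~$R_j$ at~$x$ with multiplicity~$r_j$. Both sides of the desired inequality are additive under direct sum, so it suffices to treat the case where $\fna$ has a unique radius~$R$ at~$x$, of multiplicity~$r$. Under this assumption, $h(\cdot,\fna) = -r\log R_1(\cdot,\fna)$ near~$x$, and Theorem~\ref{radius continuity} ensures that $R_1$ has a well-defined integer left-slope~$n^\vt$ at~$x$ in every direction $\vt \in T_x\mathbb{P}^1$, with $n^\vt = 1$ in all but finitely many directions. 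Since $\Irr_\vt\fna = r\, n^\vt$, the inequality is equivalent to $\sum_{\vt \in \Gamma_x}(n^\vt - 1) \le -2$. Adding or removing directions with $n^\vt = 1$ changes neither side, so the essential statement is a \emph{global} bound $\sum_{\vt \in T_x\mathbb{P}^1}(n^\vt - 1) \le -2$, with equality when $R < 1$.

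The tangent directions at~$x$ are in canonical bijection with the closed points of the residue curve $C_x \simeq \mathbb{P}^1_{\tilde k}$, so the defects $(n^\vt - 1)_\vt$ define a divisor~$D$ on~$C_x$, supported on finitely many points. The constant~$-2$ is $-\chi(\mathbb{P}^1_{\tilde k})$, which is what makes~$\mathbb{P}^1$ special here. To realize $\deg D \le -2$ concretely, I would choose a coordinate on~$\mathbb{P}^1_k$ identifying some chosen direction with the direction toward~$\infty$, use the super-harmonicity of~$h(\cdot,\fna)$ at all type~$2$ points of~$\mathbb{P}^1_k$ other than~$x$ (a known consequence of Theorem~\ref{radius continuity}) to propagate information along the tree structure of~$\mathbb{P}^1_k$, and then sum slope contributions in a Stokes-type balance at~$x$. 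An alternative route is to invoke a Young--Dwork style index theorem for Robba-type modules on~$\mathbb{P}^1$ to convert the local slope defects into a global index, whose magnitude yields~$-2$.

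The main obstacle is precisely this global-to-local step, which is where the genus of~$\mathbb{P}^1$ enters quantitatively; on a higher-genus curve the constant would change by $2g$. In the equality case $R<1$, one additionally needs to verify that the bound is saturated: intuitively, when no radius reaches~$1$ there is no ``trivial'' horizontal-section contribution cancelling part of the degree, and the Stokes balance becomes an equality. Carrying out this last check rigorously, independently of the choice of coordinate used, is the most delicate point in the argument.
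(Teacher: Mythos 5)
The first thing to note is that the paper does not prove this proposition at all: it is imported verbatim from \cite[Proposition~6.2.11]{Poi-Pul3}, so there is no internal argument to compare yours against --- you are in effect re-proving a theorem of Poineau and Pulita, and your sketch does not succeed in doing so. The reduction to the single-radius case via the decomposition theorem, and the additivity of both sides of the inequality, are fine (this is the kind of reduction alluded to in the remark following Theorem~\ref{push general}, and it is a heavy input in its own right). But everything after that is deferred. The step you yourself call ``the main obstacle'' is exactly the content of the proposition, and neither of the two justifications you offer for it holds up. Super-harmonicity of $h(\cdot,\fna)$ is \emph{not} a consequence of Theorem~\ref{radius continuity}: that theorem gives continuity, piecewise $|k^*|$-monomiality and finiteness of the set of directions with non-generic slope, and says nothing about the signs or sums of those slopes; super-harmonicity of the height at type~2 points is precisely the statement to be proved, applied at every point, so invoking it is circular. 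The ``Young--Dwork style index theorem'' is likewise not a black box one can wave at: making it precise (Christol--Mebkhout/Robba index theory on annuli together with a global Euler characteristic computation on $\mathbb{P}^1$) \emph{is} the proof, and is essentially what \cite{Poi-Pul3} and \cite{Kedlayalocalglobal} carry out.

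There is also a concrete inconsistency in your reformulation. You assert that $\Irr_\vt\fna = r\,n^\vt$ with $n^\vt = 1$ for all but finitely many directions, and that the proposition is equivalent to the single global inequality $\sum_{\vt\in T_x\mathbb{P}^1}(n^\vt-1)\le -2$. But the proposition is quantified over \emph{every} finite $\Gamma_x$ with at least two elements; taking $\Gamma_x$ to consist of two generic directions, your translation yields $\Delta_x(\Gamma_x,\fna)=2r$ against the bound $(\#\Gamma_x-2)r=0$, so under your reading the proposition would be false. A sign or normalization has gone wrong in passing from the degree of $\cR_i$ along a branch (Theorem~\ref{radius continuity} parametrizes branches one way, the definition of $\Irr_\vt$ the other), and once this is repaired the reduction to a single global inequality is no longer automatic: removing directions from $\Gamma_x$ preserves the inequality only if one also has the per-direction bound $\Irr_\vt\le r$, which you do not establish. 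The equality case when $\cR_r(x,\fna)<1$ is not addressed beyond an intuition. The honest course here is to do what the paper does and cite \cite[Proposition~6.2.11]{Poi-Pul3}.
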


We now deduce the general case thanks to the pushforward formula. Let~$X$ be a quasi-smooth $k$-analytic curve and let~$\ena$ be a differential equation of rank~$r$ on~$X$. Recall that, by Theorem~\ref{radius continuity}, for each point $x\in X$, the set
\[\Gamma^{\Irr}_{x} = \{\vt \in T_{x}X \mid  \Irr_{\vt}(\ena) \ne 1\}\]
is finite.

We also recall the local Riemann-Hurwitz formula. Let $X'$ be a quasi-smooth $k$-analytic curve and let $\varphi : X \to X'$ be an \'etale morphism. Let~$x \in X$ be an inner point of type~2 such that $x' := \varphi(x)$ is inner too. Then, by \cite[Theorem~4.5.4]{CTT14} or \cite[Theorem~5.3.1]{BojRH} and Remark~\ref{rem:nudelta}, the set
\[\Gamma^\nu_{x} = \{\vt \in T_{x}X \mid  \nu_{\vt} \ne 1 - d_{\vt}\}\]
is finite and we have
\[2g(x) - 2 = d_{x}\, (2g(x')-2) + \sum_{\vt \in T_{x}X} (\nu_{\vt} + d_{\vt} -1),\]
where~$d_{x}$ denotes the local degree of~$\varphi$ at~$x$.

\begin{corollary}
Let~$x\in X$ be an inner point of type~2. Then, for each finite subset~$\Gamma_{x}$ of~$T_{x} X$ containing~$\Gamma_{x}^\Irr$, we have 
 \[
 \Delta_x(\Gamma_{x},\ena) \le (2g(x)-2+ \# \Gamma_{x})\, r
 \]
 with equality if $\cR_{r}(x,\ena) <1$.
\end{corollary}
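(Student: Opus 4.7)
The plan is to reduce to the known projective line case (Proposition~\ref{prop:LaplacianP1}) by pushing $\ena$ forward along a suitable local finite \'etale morphism to~$\P^1_k$.

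After shrinking $X$ (the bound being local at $x$), standard results on quasi-smooth curves furnish a finite \'etale morphism $\vphi \colon X \to W$ onto an affinoid subdomain $W$ of $\A^1_k \subset \P^1_k$, with $x' := \vphi(x)$ inner and of type~$2$ (so $g(x')=0$) and $\vphi^{-1}(x') = \{x\}$. Set $d := \deg(\vphi)$ and $\fna := \vphi_\ast\ena$, of rank $rd$. Then choose a finite $\Gamma_{x'} \subset T_{x'}\P^1$ of cardinality at least~$2$ containing $\vec{\vphi}_x(\Gamma_x \cup \Gamma_x^\nu)$, and set $\tilde{\Gamma}_x := \vec{\vphi}_x^{-1}(\Gamma_{x'}) \supset \Gamma_x \cup \Gamma_x^\nu$.

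Corollary~\ref{cor:laplacianxy} combined with Proposition~\ref{prop:LaplacianP1} applied to $\fna$ gives
\[\Delta_x(\tilde{\Gamma}_x, \ena) = \Delta_{x'}(\Gamma_{x'}, \fna) + r\sum_{\vt \in \tilde{\Gamma}_x}\nu_\vt \le (\#\Gamma_{x'} - 2)\,rd + r\sum_{\vt \in \tilde{\Gamma}_x}\nu_\vt,\]
with equality if $\cR_{rd}(x',\fna) < 1$. The local Riemann--Hurwitz formula at~$x$ (using $g(x')=0$ and $\tilde{\Gamma}_x \supset \Gamma_x^\nu$), together with the identity $\sum_{\vt \in \tilde{\Gamma}_x}d_\vt = d\,\#\Gamma_{x'}$, produces
\[r\sum_{\vt \in \tilde{\Gamma}_x}\nu_\vt = r(2g(x)-2) + 2rd - rd\,\#\Gamma_{x'} + r\,\#\tilde{\Gamma}_x.\]
Substituting into the previous inequality, the $\#\Gamma_{x'}$ and $d$ terms cancel cleanly and one obtains $\Delta_x(\tilde{\Gamma}_x, \ena) \le r(2g(x) - 2 + \#\tilde{\Gamma}_x)$, with equality if and only if the $\P^1$ bound is tight.

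The descent from $\tilde{\Gamma}_x$ to $\Gamma_x$ is then immediate: every direction in $\tilde{\Gamma}_x \setminus \Gamma_x$ lies outside $\Gamma_x^\Irr$, so its irregularity takes the default value~$r$ coming from the ``monomial of degree~$1$'' behavior of Theorem~\ref{radius continuity}, and this precisely matches the per-direction increment~$r$ on the right-hand side, so the bound transfers unchanged. Finally, the pushforward formula of Theorem~\ref{push general} combined with the strict monotonicity of the profile function $f^x_{\vphi}$ shows that $\cR_{rd}(x',\fna) < 1$ if and only if $\cR_r(x,\ena) < 1$, settling the equality case.

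The main obstacle will be the construction of the local finite \'etale morphism onto $\A^1_k$ with the single-preimage condition $\vphi^{-1}(x') = \{x\}$: this requires combining the existence of a tame parameter at~$x$ with a careful choice of affinoid neighborhood of~$x$ so that the Shilov boundary of~$W$ has a unique preimage. A secondary bookkeeping point is verifying that the descent step is consistent with the paper's sign convention for the default irregularity, so that the cancellation of ``default'' contributions is exact.
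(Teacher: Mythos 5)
Your proof follows the paper's argument essentially verbatim: reduce to the projective line by a local finite \'etale morphism $\varphi$ with $\varphi^{-1}(x')=\{x\}$, combine Corollary~\ref{cor:laplacianxy} with Proposition~\ref{prop:LaplacianP1}, eliminate $\sum_{\vt}\nu_{\vt}$ via the local Riemann--Hurwitz formula so that the $d$ and $\#\Gamma'_{x'}$ terms cancel, and pass between $\Gamma_{x}$ and $\vec\varphi_{x}^{-1}(\Gamma'_{x'})$ using that directions outside $\Gamma_{x}^{\Irr}$ carry the default irregularity~$r$, with the equality case handled through Theorem~\ref{push general} exactly as in the paper. The construction of $\varphi$ that you flag as the main obstacle is standard and is taken for granted in the paper as well, so nothing essential is missing.
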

\begin{proof}
First note that, if the result holds for some finite subset~$\Gamma_{x}$ of~$T_{x} X$ containing~$\Gamma_{x}^\Irr$,  then it holds for all of them.

Let $\varphi : V \to U$ be an \'etale morphism of degree~$d$ from an open neighborhood of~$x$ in~$V$ to an open subset of~$\mathbb{P}^1$. Set~$x' := \varphi(x)$. We may assume that~$\varphi^{-1}(x')=\{x\}$, so that $d_{x}=d$. Let~$\Gamma'_{x'}$ be a finite subset of~$T_{x'} \mathbb{P}^1$ such that $\Gamma_{x} := \varphi^{-1}(\Gamma'_{x'})$ contains $\Gamma_{x}^\Irr \cup \Gamma^\nu_{x}$.

By the local Riemann-Hurwitz formula, we have
\[\sum_{\vt \in \Gamma_{x}} (\nu_{\vt} + d_{\vt} -1) = \sum_{\vt \in T_{x}X} (\nu_{\vt} + d_{\vt} -1) = 2g(x)-2+2d.\]
Moreover, we have 
\[\sum_{\vt \in \Gamma_{x}} d_{\vt} = \sum_{\vv \in \Gamma'_{x}} \sum_{\vt \in \vec\varphi_{x}^{-1}(\vv)} d_{\vt} = d \cdot \# \Gamma'_{x'},\] 
hence 
\[\sum_{\vt \in \Gamma_{x}} (d_{\vt}-1) = d \cdot \# \Gamma'_{x'} - \# \Gamma_{x}\]
and 
\[\sum_{\vt\in \Gamma_{x}} \nu_{\vt} =  2g(x)-2+2d - d \cdot \# \Gamma'_{x'} + \# \Gamma_{x}.\]
Noting that the rank of $\varphi_{*}\ena$ is $rd$, the result now follows from Corollary~\ref{cor:laplacianxy} and Proposition~\ref{prop:LaplacianP1}, together with Theorem~\ref{push general} to show that $\cR_{rd}(x',\varphi_{*}\ena)<1$ when $\cR_{r}(x,\ena)<1$.
\end{proof}

By a classical decomposition argument, a more general result follows. We refer to the proof of \cite[Proposition~6.2.20]{Poi-Pul3} for details.

For $x\in X^\hyp$ and $i=1,\dotsc,r$, we define an $i^\textrm{th}$ Laplacian $\Delta^i_x(\Gamma_{x},\ena)$ by considering the slopes of the $i^\textrm{th}$ partial height of the Newton polygon instead of that of its total height. Denote by~$\Gamma^{\cR}_{x}$ the subset of~$T_{x}X$ defined by the condition that, for each $\vt \in \Gamma^{\cR}_{x}$, there exists $j\in \{1,\dotsc,r\}$ such that the slope of the $j^\textrm{th}$ radius of convergence along~$\vt$ is different from~1. By Theorem~\ref{radius continuity}, it is finite.

\begin{corollary}\label{cor:partialheight}
Let~$x\in X$ be an inner point of type~2. Let $i\in\{1,\dotsc,r\}$. Then, for each finite subset~$\Gamma_{x}$ of~$T_{x} X$ containing~$\Gamma_{x}^\cR$, we have 
 \[
 \Delta^i_x(\Gamma_{x},\ena) \le (2g(x)-2+ \# \Gamma_{x})\, i
 \]
 with equality in the following two cases:
 \begin{enumerate}[a)]
 \item $i=r$ and $\cR_{r}(x,\ena)<1$;
 \item $i<r$ and $\cR_{i}(x,\ena) < \cR_{i+1}(x,\ena)$.
 \end{enumerate}
\end{corollary}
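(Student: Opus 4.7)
The plan is to reduce the partial-height inequality to the total-height inequality of the previous corollary using the decomposition theorem for $p$-adic differential equations, which allows one to split $\ena$ locally near $x$ along any strict gap in its multiradius of convergence (see \cite[Corollary~3.6.9]{Poi-Pul3} or \cite[Proposition~2.2.9]{Kedlayalocalglobal}).

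I first dispose of the two equality cases. In case (a), $i=r$ forces $\Delta^r_x(\Gamma_x,\ena)$ to coincide with $\Delta_x(\Gamma_x,\ena)$, and the hypothesis $\cR_r(x,\ena)<1$ is exactly what is needed to invoke the equality part of the previous corollary. In case (b), the strict gap $\cR_i(x,\ena)<\cR_{i+1}(x,\ena)$ persists on an open neighborhood of $x$ by Theorem~\ref{radius continuity}, and the decomposition theorem yields a splitting $\ena\simeq\ena_{\le i}\oplus\ena_{>i}$ on this neighborhood, with $\ena_{\le i}$ of rank~$i$ and multiradius equal to the first $i$ components of the multiradius of $\ena$ at every nearby point. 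Consequently $h_i(\cdot,\ena)=h(\cdot,\ena_{\le i})$ near~$x$, and $\Delta^i_x(\Gamma_x,\ena)=\Delta_x(\Gamma_x,\ena_{\le i})$. Since the top radius $\cR_i(x,\ena_{\le i})=\cR_i(x,\ena)$ is strictly less than $\cR_{i+1}(x,\ena)\le 1$, applying the equality case of the previous corollary to $\ena_{\le i}$ gives $\Delta^i_x(\Gamma_x,\ena)=i\,(2g(x)-2+\#\Gamma_x)$.

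For the general inequality I apply the decomposition theorem at every actual gap of the multiradius of $\ena$ at $x$. Writing $\cR^{(1)}_x<\cdots<\cR^{(m)}_x$ for the distinct values with respective multiplicities $\mu_1,\dots,\mu_m$ and $\sigma_j=\mu_1+\cdots+\mu_j$, one obtains a decomposition $\ena=\bigoplus_{j=1}^m\ena^{(j)}$ in a neighborhood of $x$ with $\ena^{(j)}$ of rank $\mu_j$ and all radii at~$x$ equal to $\cR^{(j)}_x$. If $i=\sigma_j$ for some $j$, case (b) applies. Otherwise write $i=\sigma_{j-1}+p$ with $0<p<\mu_j$; concavity of the convergence polygon guarantees that, in a small enough neighborhood of~$x$, the first $i$ radii at each nearby point are exactly those of $\bigoplus_{l<j}\ena^{(l)}$ together with the first $p$ radii of $\ena^{(j)}$, so that $h_i(\cdot,\ena)=h(\cdot,\bigoplus_{l<j}\ena^{(l)})+h_p(\cdot,\ena^{(j)})$. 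The first summand is handled by the equality case already established, and the claim reduces to the bound $\Delta^p_x(\Gamma_x,\ena^{(j)})\le p\,(2g(x)-2+\#\Gamma_x)$ for the submodule $\ena^{(j)}$, all of whose radii at~$x$ coincide.

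The central obstacle is precisely this last estimate. The decomposition theorem cannot be applied to $\ena^{(j)}$ at~$x$ because no gap is available, and concavity of the convergence polygon forces the per-direction slopes $q_l^{\vt}$ of the radii $\cR_l(\ena^{(j)})$ to be \emph{non-increasing} in~$l$ (the difference $-\log\cR_{l+1}+\log\cR_{l}$ is non-negative and vanishes at~$x$, so its slope at $x^-$ along $\vt$ is non-positive); this yields $\Irr^p_{\vt}(\ena^{(j)})\ge\frac{p}{\mu_j}\Irr^{\mu_j}_{\vt}(\ena^{(j)})$, which is the wrong direction to produce an upper bound on $\Delta^p_x(\Gamma_x,\ena^{(j)})$. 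I therefore expect one must argue by induction on $\mu_j$ combined with a limiting/perturbation procedure: deform $\ena^{(j)}$ inside a one-parameter family of differential equations with strictly separated radii at~$x$, apply the equality case of the previous step to each member of the family, and pass to the limit using the upper semi-continuity of the partial Laplacian. This is the mechanism made precise in \cite[Proof of Proposition~6.2.20]{Poi-Pul3}, to which the statement refers for the full details.
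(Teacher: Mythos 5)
Your proposal follows the same route that the paper itself (very tersely) indicates: the ``classical decomposition argument'', i.e.\ splitting $\ena$ near $x$ along the strict gaps of its multiradius and feeding each piece into the total-height bound of the preceding corollary. Your treatment of case (a), of case (b), and more generally of every index $i$ at which the multiradius of $\ena$ at $x$ has a gap, is correct and is exactly what the paper has in mind; your reduction of the remaining case to the estimate $\Delta^p_x(\Gamma_x,\ena^{(j)})\le p\,(2g(x)-2+\#\Gamma_x)$ for a summand $\ena^{(j)}$ all of whose radii at $x$ coincide is also correct (the additive splitting $h_i=h(\bigoplus_{l<j}\ena^{(l)})+h_p(\ena^{(j)})$ does hold near $x$ by continuity of the radii). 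You have, moreover, correctly located the crux: for $i$ strictly inside a block of equal radii the decomposition theorem is unavailable, and the concavity of the convergence polygon only yields $\Irr^p_{\vt}(\ena^{(j)})\ge\frac{p}{\mu_j}\Irr^{\mu_j}_{\vt}(\ena^{(j)})$, i.e.\ a \emph{lower} bound on $\Delta^p_x$, which is useless for the claimed upper bound. (Minor slip on the way: $\log\cR_l-\log\cR_{l+1}$ is non-\emph{positive}, not non-negative; your final inequality is nevertheless the right one.)

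The gap is your last step. The mechanism you invoke to close it --- deforming $\ena^{(j)}$ inside a one-parameter family of differential equations with separated radii at $x$ and passing to the limit by semicontinuity of the partial Laplacian --- is not substantiated and is not what the cited reference does: no such deformation theory of connections is set up there, and it is unclear that one could produce such a family while controlling all the partial heights on a whole neighbourhood of $x$. The paper's own proof is no more than the sentence ``by a classical decomposition argument \dots\ we refer to the proof of \cite[Proposition~6.2.20]{Poi-Pul3} for details'', so in effect both you and the paper outsource exactly this point; but you should not present the perturbation heuristic as the content of that reference. To make the argument self-contained you would need the actual device used there for indices interior to a block of equal radii, rather than a deformation of the equation; as written, the general inequality for such $i$ remains unproven in your proposal.
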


\begin{rmk}
This result appears in \cite[Theorem~5.3.6]{Kedlayalocalglobal}, although with some details left out in the proof and using a quite dense argument. 

In \cite[Proposition~6.2.20]{Poi-Pul3}, the formula was proved under some assumptions, which are satisfied when $g(x)=0$ or $\textrm{char}(\tilde k) \ne 2$. The idea is to construct a morphism~$\varphi$ as in the proof such that the residue morphism $\tilde\varphi_{x} : \sC_{x} \to \sC_{x'}$ is everywhere tamely ramified. When $\textrm{char}(\tilde k) \ne 2$, a result of Fulton shows that this is always possible (see \cite[Proposition~8.1]{Fulton}). 

Here, we prove the formula in full generality and more directly, without relying on heavy geometric input.
\end{rmk}

\begin{rmk}
In the proof of the above results we used the local Riemann-Hurwitz formula. We indicate that, conversely, this formula can be deduced from Theorem \ref{thm:pushforward height} and Corollary \ref{cor:laplacianxy}, combined with a suitable $p$-adic index theorem that relates the Euler-Poincar\'e characteristic of a $p$-adic differential equation to that of the curve itself \textit{via} the irregularities of the equation. This aspect will be further explored in some other paper. 
\end{rmk}

\end{document}